\definecolor{bcP}{rgb}{1,0,1}
\theoremstyle{plain} 
\newtheorem{Theorem}{Theorem} 
\newtheorem{Definition}[Theorem]{Definition}
\newtheorem{Lemma}[Theorem]{Lemma}
\newtheorem{Proposition}[Theorem]{Proposition}
\newtheorem{Corollary}[Theorem]{Corollary}
\newtheorem{Conjecture}[Theorem]{Conjecture}
\theoremstyle{definition}
\newtheorem{Example}[Theorem]{Example}
\theoremstyle{remark} 
\newenvironment{example}
  {\pushQED{\qed}\examplex}
  {\popQED\endexamplex}
\newtheorem{Remark}[Theorem]{Remark}
  \newcommand{\kS}{\mathcal{S}}
 \newcommand{\FF}{\mathbb{F}}
\newcommand{\kC}{\mathcal{C}}
\newcommand{\kL}{\mathcal{L}}
\title{The direct sum of $q$-matroids}
\author[1]{Michela Ceria}
\affil[1]{\small 
Dept. of Mechanics, Mathematics \& Management, Politecnico di Bari,  Via Orabona 4 - 70125 Bari - Italy; michela.ceria@gmail.com}
\author[2]{Relinde Jurrius}
\affil[2]{\small Faculty of Military Sciences, Netherlands Defence Academy, The Netherlands; rpmj.jurrius@mindef.nl}
\date{}
\begin{document}
\newpage
\fussy
\maketitle

\begin{abstract}
For classical matroids, the direct sum is one of the most straightforward methods to make a new matroid out of existing ones. This paper defines a direct sum for $q$-matroids, the $q$-analogue of matroids. This is a lot less straightforward than in the classical case, as we will try to convince the reader. With the use of submodular functions and the $q$-analogue of matroid union we come to a definition of the direct sum of $q$-matroids. As a motivation for this definition, we show it has some desirable properties.  \\
\noindent \textbf{Keywords:} $q$-matroid, $q$-analogue, direct sum \\
\textbf{MSC2020:} 05B35, 05A30
\end{abstract}

\section{Introduction}

The study of $q$-matroids, introduced by Crapo \cite{crapo1964theory}, has recently attracted renewed attention because of its link to network coding. After the reintroduction of the object by Jurrius and Pellikaan \cite{JP18} and independently that of $(q,r)$-polymatroids by Shiromoto \cite{S19}, several other papers have studied these objects, often in relation to rank metric codes. See for example \cite{BCR21,GJ20,ghorpade_shellability_2022,gluesing2021qpolyindep,gluesing2021qpoly,gorla2019rank,panja2019some}. \\  

Roughly speaking, a $q$-analogue in combinatorics is a generalisation from sets to finite dimensional vector spaces. So a $q$-matroid is a finite dimensional vector space with a rank function defined on its subspaces, satisfying certain properties. One can also view this generalisation from the point of view of the underlying lattice: where matroids have the Boolean lattice (of sets and subsets) as their underlying structure, $q$-matroids are defined over the subspace lattice. The work of finding a $q$-analogue often comes down to writing a statement about sets in such a lattice-theoretic way that the $q$-analogue is a direct rephrasing for the subspace lattice. However, this is often not a trivial task, for two reasons. First, there might be several equivalent ways to define something over the Boolean lattice, where the $q$-analogues of these statements are not equivalent. Secondly, some statements on the Boolean lattice do not have a $q$-analogue: the subspace lattice is, contrarily to the Boolean lattice, not distributive. \\

In this paper we consider the {\em direct sum} of two $q$-matroids. An option to do this is to extend to the realm of sum-matroids \cite{panja2019some}, but we are looking for a construction that gives a $q$-matroid. This is one of the cases as mentioned above where the $q$-analogue is a lot harder than the relatively simple procedure of taking the direct sum of two classical matroids. The latter is defined as follows. Let $E_1$ and $E_2$ be disjoint sets and let $E=E_1\cup E_2$. Let $M_1=(E_1,r_1)$ and $M_2=(E_2,r_2)$ be two matroids. Then the direct sum $M_1\oplus M_2$ is a matroid with ground set $E$. For its rank function, note that we can write any $A\subseteq E$ as a disjoint union $A=A_1\sqcup A_2$ with $A_1\subseteq E_1$ and $A_2\subseteq E_2$. The rank function of the direct sum $M_1\oplus M_2$ is now given by $r(A)=r_1(A_1)+r_2(A_2)$. \\
If we try to mimic this procedure in the $q$-analogue, we run into trouble quite fast. Let $E_1$ and $E_2$ be disjoint subspaces and let $E=E_1\oplus E_2$. If we consider a subspace $A\subseteq E$, it might be that we cannot write it as a direct sum $A_1\oplus A_2$, with $A_1\subseteq E_1$ and $A_2\subseteq E_2$. In fact, most of the subspaces of $E$ can not be written in this way. Our goal is to define a rank function for these subspaces. \\
A naive try is to define a rank function in the $q$-analogue for all spaces $A\subseteq E$ that can be written as $A_1\oplus A_2$, and hope that the axioms for the rank function take care of the rest of the spaces. However, as we show with an example in Section \ref{ExDim4}, this procedure does not give us a unique direct sum. As a byproduct of this example, we find the smallest non-representable $q$-matroid. \\

Our solution for the direct sum of $q$-matroids is the following. We will first define the notion of {\em matroid union} for $q$-matroids in Section \ref{MatUn}. This notion is dual to matroid intersection, that we consider in Section \ref{IntDual}. Then we show in Section \ref{DirSum} that the direct sum of a $q$-matroid and a loop can be defined. Finally, we define the direct sum of two $q$-matroids by first adding loops to get two $q$-matroids on the same ground space, and then taking their matroid union. \\
To motivate this definition we show that this construction has several desirable properties. First of all, it generalises our naive attempt in Section \ref{ExDim4}. Also, taking the dual of a direct sum is isomorphic to first taking duals and then taking the direct sum. Lastly, restriction and contraction to $E_1$ and $E_2$ give back one of the original $q$-matroids. \\

We finish this paper by briefly considering what it would mean for a $q$-matroid to be connected (Section \ref{Connect}). As one might assume from the difficulty of the direct sum, this is also not an easy endeavour. We outline the problems that appear when trying to make a $q$-analogue of some of the several equivalent definitions of connectedness in classical matroids. \\

At the end of this paper (Appendix \ref{qcatalogue}) we give a catalogue of small $q$-matroids. In the paper, we will often refer to examples from this catalogue. Since the study of $q$-matroids is a relatively new one, we hope this catalogue to be useful for others learning about $q$-matroids.

\section{Preliminaries}\label{PrelimSec}

Following the notation of \cite{BCR21} we denote by $n$ a fixed positive integer and by $E$ a fixed $n$-dimensional vector space over an arbitrary field $\mathbb{F}.$ The notation $\mathcal{L}(E)$ indicates the \textbf{lattice of subspaces} of $E$. For any $A,B\in\mathcal{L}(E)$ with $A\subseteq B$ we denote by $[A,B]$ the interval between $A$ and $B$, that is, the lattice of all subspaces $X$ with $A\subseteq X\subseteq B$. For $A\subseteq E$ we use the notation $\mathcal{L}(A)$ to denote the interval $[\{0\},A]$. For more background on lattices, see for example Birkhoff \cite{birkhoff}. \\

We use the following definition of a $q$-matroid.

\begin{Definition}\label{rankfunction}
A $q$-matroid $M$ is a pair $(E,r)$ where $r$ is an integer-valued  function defined on the subspaces of $E$ with the following properties:
\begin{itemize}
\item[(R1)] For every subspace $A\in \kL(E)$, $0\leq r(A) \leq \dim A$. 
\item[(R2)] For all subspaces $A\subseteq B \in \kL(E)$, $r(A)\leq r(B)$. 
\item[(R3)] For all $A,B\in \kL(E)$, $r(A+ B)+r(A\cap B)\leq r(A)+r(B)$.  
\end{itemize}
The function $r$ is called the \textbf{rank function} of the $q$-matroid. 
\end{Definition}

Sometimes, we will need to deal with the rank functions of more than one $q$-matroid at a time, say $M,M'$, with ground spaces $E$, $E'$, respectively. In order to distinguish them (and emphasize the $q$-matroid in which we are computing the rank), we will write $r(M;A)$ for the rank in $M$ of a subspace $A\subseteq E$ and $r(M';A')$ for the rank in $M'$ of a subspace $A'\subseteq E'$. For a $q$-matroid $M$ with ground space $E$, we use $r(M)$ as notation for $r(M;E)$. \\
We will use the axioms of the rank functions repeatedly in our proofs, as well as the following lemma that follows by induction from the axiom (R2') in \cite[Theorem 31]{BCR21}.

\begin{Lemma}[Local semimodularity]\label{localSemimodularity}
If $A\subseteq B\subseteq E$ then $r(B)-r(A)\leq\dim B-\dim A$.
\end{Lemma}

A way to visualise a $q$-matroid is by taking the Hasse diagram of the underlying subspace lattice and colour all the covers: red if the rank goes up and green if the rank stays the same. This is done in Appendix \ref{qcatalogue}. More properties of this bi-colouring can be found in \cite{BCJ17}.

There are several important subspaces in $q$-matroids.

\begin{Definition}
Let $(E,r)$ be a $q$-matroid. A subspace $A$ of $E$ is called an \textbf{independent} space of $(E,r)$ if \[r(A)=\dim A.\]
An independent subspace that is maximal with respect to inclusion is called a \textbf{basis}.
A subspace that is not an independent space of $(E,r)$ is called a \textbf{dependent space} of the $q$-matroid $(E,r)$. 
We call $C \in \kL(E)$ a \textbf{circuit} if it is itself a dependent space and every proper subspace of $C$ is independent.
A \textbf{spanning space} of the $q$-matroid $(E,r)$ is a subspace $S$  such that $r(S)=r(E)$. 
A subspace $A$ of a $q$-matroid $(E,r)$ is called a \textbf{flat} if for all $1$-dimensional subspaces $x \in \kL(E)$ such that $x\nsubseteq A$ we have \[r(A+x)>r(A).\]
A subspace $H$ is called a $\textbf{hyperplane}$ if it is a maximal proper flat, i.e., if $H \neq E$ and the only flat that properly contains $H$ is $E$.
A $1$-dimensional subspace $\ell$ is called a \textbf{loop} if $r(\ell)=0$. All loops together form a subspace (\cite[Lemma 11]{JP18}) that we call the \textbf{loop space} of $M$.
\end{Definition}

A $q$-matroid can be equivalently defined by its independent spaces, bases, circuits, spanning spaces, flats and hyperplanes. See \cite{BCR21} for an overview of these cryptomorphic definitions. We will explicitly use the axioms for circuits:

\begin{Definition}\label{circuit-axioms}
Let $\mathcal{C}\subseteq\mathcal{L}(E)$. We
define the following \textbf{circuit axioms}.
\begin{itemize}
\item[(C1)] $\{0\}\notin\mathcal{C}$.
\item[(C2)] For all $C_1,C_2\in\mathcal{C}$, if $C_1\subseteq C_2$, then $C_1=C_2$.
\item[(C3)]  For distinct $C_1,C_2 \in \kC$ and any $X\in \kL(E)$ of codimension $1$ there is a circuit $C_3 \in \kC$ such that $C_3 \subseteq (C_1+C_2)\cap X$.
\end{itemize}
If $\kC$ satisfies the circuit axioms (C1)-(C3), we say that $(E,\mathcal{C})$ is a \textbf{collection of circuits}.
\end{Definition}

Recall that a lattice isomorphism between a pair of lattices $(\kL_1,\leq_1,\vee_1,\wedge_1)$ and $(\kL_2,\leq_2,\vee_2,\wedge_2)$ is a bijective function $\varphi:\kL_1\longrightarrow\kL_2$ that is order-preserving and preserves the meet and join, that is, for all $x,y\in\kL_1$ we have that $\varphi(x\wedge_1 y)=\varphi(x)\wedge_2\varphi(y)$ and $\varphi(x\vee_1 y)=\varphi(x)\vee_2\varphi(y)$. A lattice anti-isomorphism between a pair of lattices is a bijective function $\psi:\kL_1\longrightarrow\kL_2$ that is order-reversing and interchanges the meet and join, that is, for all $x,y\in\kL_1$ we have that $\psi(x\wedge_1 y)=\psi(x)\vee_2\psi(y)$ and $\psi(x\vee_1 y)=\psi(x)\wedge_2\psi(y)$.  
We hence define a notion of equivalence and duality between $q$-matroids.

\begin{Definition}
Let $E_1,E_2$ be vector spaces over the same field $\mathbb{F}$. Let $M_1=(E_1,r_1)$ and $M_2=(E_2,r_2)$ be $q$-matroids. We say that $M_1$ and $M_2$ are \textbf{lattice-equivalent} or \textbf{isomorphic} if there exists a lattice isomorphism $\varphi:\kL(E_1)\longrightarrow \kL(E_2)$ such that $r_1(A)=r_2(\varphi(A))$ for all $A\subseteq E_1$. In this case we write $M_1 \cong M_2$.
\end{Definition}
 
Fix an anti-isomorphism $\perp:\kL(E)\longrightarrow\kL(E)$ that is an involution. For any subspace $X \in \kL(E)$ we denote by $X^\perp$ the \textbf{dual} of $X$ in $E$ with respect to $\perp$. Note that since an anti-isomorphism preserves the length of intervals, we have for any $X\leq\kL(E)$ that $\dim(X^\perp)=\dim(E)-\dim(X)$. \\
From a lattice point of view, if $B=B_1\oplus B_2$, then $B=B_1\vee B_2$ and $B_1\wedge B_2=0$. Since $\perp$ is an anti-isomorphism of $\kL(E)$, we have that $B^\perp=B_1^\perp\wedge B_2^\perp$ and $B_1^\perp\vee B_2^\perp=1$. Important operations on $q$-matroids are restriction, contraction and duality. We give a short summary here and refer to \cite{BCIR21,JP18} for details.

\begin{Definition}\label{defdual}
Let $M=(E,r)$ be a $q$-matroid. Then $M^*=(E,r^*)$ is also a $q$-matroid, called the \textbf{dual $q$-matroid}, with rank function
\[ r^*(A)=\dim(A)-r(E)+r(A^\perp). \]
The subspace $B$ is a basis of $M$ if and only if $B^\perp$ is a basis of $M^*$. From bi-colouring point of view, we get the dual $q$-matroid by turning the Hasse diagram upside down and interchange all red and green covers.
\end{Definition}

\begin{Definition}\label{restr}
Let $M=(E,r)$ be a $q$-matroid. The \textbf{restriction} of $M$ to a subspace $X$ is the $q$-matroid $M|_X$ with ground space $X$ and rank function $r_{M|_X}(A)=r_M(A)$.
The \textbf{contraction} of $M$ of a subspace $X$ is the $q$-matroid $M/X$ with ground space $E/X$ and rank function $r_{M/X}(A)=r_M(A)-r_M(X)$. A $q$-matroid that is obtained by restriction and contraction of $M$ is called a \textbf{minor} of $M$.
\end{Definition}

\begin{Theorem}\label{DualRestrContr}
Restriction and contraction are dual operations, that is, $M^*/X\cong  (M|_{X^\perp})^*$ and $(M/X)^* \cong M^*|_{X^\perp}$.
\end{Theorem}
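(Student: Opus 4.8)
The plan is to establish the second relation $(M/X)^*\cong M^*|_{X^\perp}$ by a direct rank computation, and then obtain the first relation from it by duality. The two $q$-matroids in the second relation have ground spaces of the same dimension: $(M/X)^*$ lives on $E/X$ and $M^*|_{X^\perp}$ lives on $X^\perp$, and $\dim(E/X)=n-\dim X=\dim(X^\perp)$ because $\perp$ preserves the length of intervals. The bridge between the two underlying lattices is $\perp$ itself: being an order-reversing involution with $E^\perp=\{0\}$, it restricts to an order-reversing bijection of the interval $[X,E]\cong\kL(E/X)$ onto $[\{0\},X^\perp]=\kL(X^\perp)$, sending $A$ to $A^\perp$.

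First I would use the principle that the dual of a $q$-matroid can be read off along any order-reversing lattice bijection (its isomorphism type being independent of the chosen anti-involution). Transporting the dual of $M/X$ along $\delta:=\perp|_{[X,E]}$ produces a $q$-matroid on $X^\perp$ whose rank at $B=A^\perp$ is $\dim B-(r(E)-r(X))+r_{M/X}(A)$, where $A=B^\perp\in[X,E]$ and $r(E)-r(X)$ is the rank of the top of $M/X$. It then remains to check that this rank function coincides with $r^*$ on $\kL(X^\perp)$.

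This check is a one-line cancellation. By Definition \ref{restr}, $r_{M/X}(A)=r(A)-r(X)=r(B^\perp)-r(X)$, so the transported rank at $B$ equals $\dim B-(r(E)-r(X))+(r(B^\perp)-r(X))=\dim B-r(E)+r(B^\perp)$; the two occurrences of $r(X)$ cancel. By Definition \ref{defdual} this is exactly $r^*(B)=r_{M^*|_{X^\perp}}(B)$. Hence the comparison map is rank-preserving and $(M/X)^*\cong M^*|_{X^\perp}$. Applying this relation with $M^*$ in place of $M$, and using the involutivity $M^{**}=M$ together with $(X^\perp)^\perp=X$, then transports it to the first relation.

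The step I expect to be the main obstacle is not the algebra but the first move of the second paragraph: making precise, within the framework of the fixed involution $\perp$ of Definition \ref{defdual}, that transporting the intrinsic dual of $M/X$ along the order-reversing bijection $\perp$ yields a $q$-matroid isomorphic to $(M/X)^*$. Concretely, one must verify that the two order reversals, the one built into duality and the one carried by $\perp$, combine into an honest order-preserving lattice isomorphism of $q$-matroids, rather than a mere anti-isomorphism of the underlying intervals. Once this compatibility is pinned down, the short cancellation above finishes both relations.
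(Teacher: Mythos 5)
The paper states Theorem \ref{DualRestrContr} without proof, deferring to the cited references, so there is no in-text argument to compare yours against; I judge the proposal on its own merits. Your proof of the second relation $(M/X)^*\cong M^*|_{X^\perp}$ is correct, and the step you flag as the main obstacle does go through: the intrinsic dual of $M/X$ is computed with respect to some anti-involution $\sigma$ of $\kL(E/X)$, and the composite $\delta\circ\sigma$, with $\delta=\perp|_{[X,E]}:[X,E]\to\kL(X^\perp)$, is an order- and dimension-preserving lattice isomorphism along which the two rank functions agree; together with the fact that the isomorphism type of the dual is independent of the chosen anti-involution, this legitimises the transport, and the cancellation $\dim B-(r(E)-r(X))+(r(B^\perp)-r(X))=\dim B-r(E)+r(B^\perp)=r^*(B)$ finishes that half.

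The last step, however, does not deliver what you claim. Substituting $M^*$ for $M$ in the second relation and using $M^{**}=M$ gives $(M^*/X)^*\cong M|_{X^\perp}$, that is, $M^*/X\cong (M|_{X^\perp})^*$ --- with a dual on the right-hand side. This is not the first relation as printed. Indeed, the printed relation $M^*/X\cong M|_{X^\perp}$ fails in general: take $M=U_{1,2}$ and $X$ one-dimensional; then $M^*\cong U_{1,2}$, so $r^*(X)=r^*(E)=1$ and $M^*/X\cong U_{0,1}$, whereas $r(X^\perp)=1$ gives $M|_{X^\perp}\cong U_{1,1}$. So no argument can bridge this gap; what your method actually proves is the standard identity $M^*/X\cong(M|_{X^\perp})^*$ (equivalently $(M|_X)^*\cong M^*/X^\perp$), which is also the only form the paper ever uses (Corollary \ref{DualUnionIntersection} invokes only the second identity). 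You should say explicitly that the first relation needs a dual inserted on one side, rather than asserting that the substitution ``transports'' to it as stated.
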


Finally, we will define what it means for a $q$-matroid to be representable and give an example of an important class of $q$-matroids.

\begin{Definition}
Let $M=(E,r)$ be a $q$-matroid of rank $k$ over a field $K$. Let $A\subseteq E$ and let $Y$ be a matrix with column space $A$. We say that $M$ is \textbf{representable} if there exists a $k\times n$ matrix $G$ over an extension field $L/K$ such that $r(A)$ is equal to the matrix rank of $GY$ over $L$.
\end{Definition}

\begin{Example}
Let $k$ be a positive integer, $k \leq n$. 
The \textbf{uniform $q$-matroid} is the $q$-matroid $M=(E,r)$  with rank function defined as follows:
\[ r(U):= \left\{ \begin{array}{cl}
	\dim(U) & \text{ if } \dim(U) \leq k, \\
	k & \text{ if } \dim(U) > k.
\end{array}\right.
\] 
We denote this $q$-matroid by $U_{k,n}$.
\end{Example}

\section{Intuitive try for the direct sum}\label{ExDim4}

As stated in the introduction, the $q$-analogue of the direct sum is not straightforward. Let $E=E_1\oplus E_2$ be a direct sum of subspaces and let $A\subseteq E$. Then we cannot, in general, decompose $A\subseteq E$ as $A=A_1\oplus A_2$ with $A_1\subseteq E_1$ and $A_2\subseteq E_2$. \\
With other cryptomorphic definitions of $q$-matroids we run into similar problems. Look for example at the independent spaces. In the classical case, the independent sets of the direct sum $M_1\oplus M_2$ are the unions of an independent set in $M_1$ and an independent set in $M_2$. If we want to take the direct sum of the $q$-matroids $M_1=U_{1,1}$ and $M_2=U_{1,1}$, we expect all subspaces to be independent. However, not all such spaces can be written as the sum of an independent space in $M_1$ and an independent space in $M_2$. Similar problems arise when trying to construct the bases and circuits of the direct sum of the $q$-matroids $M_1$ and $M_2$.
\\
In this section we explore if we can define the rank function of a direct sum of $q$-matroids by simply defining $r(A)=r_1(A_1)+r_2(A_2)$ for all $A$ that can be written as $A=A_1\oplus A_2$, and hoping that the rank axioms will take care of the rest of the subspaces. (Spoiler alert: it will not work).

\subsection{First definition and properties}\label{FirstDef}

Let us make our first trial to define the direct sum. We start with a definition mimicking the classical case. We consider these properties desirable for the direct sum of $q$-matroids. We also prove some direct consequences of these properties. The properties from Definition \ref{def-directsum1} will turn out not to define a unique $q$-matroid, hence they are not sufficient for defining the direct sum of $q$-matroids. However, our final definition will satisfy these properties.

\begin{Definition}\label{def-directsum1}
Let $M_1=(E_1,r_1)$ and $M_2=(E_2,r_2)$ be two $q$-matroids on trivially intersecting ground spaces. For a $q$-matroid $M=(E,r)$ on the ground space $E=E_1\oplus E_2$ we define the following properties:
\begin{itemize}
\item the minors $M|_{E_1}$ and $M/E_2$
are both isomorphic to $M_1$,
\item the minors $M|_{E_2}$ and $M/E_1$
are both isomorphic to $M_2$.
\end{itemize}
\end{Definition}

In particular, it follows from this construction that the rank of $M$ is the sum of the ranks of $M_1$ and $M_2$. The next theorem shows that this definition is equivalent to what we recognise as the $q$-analogue of the definition of direct sum in the classical case.

\begin{Theorem}\label{RangoSomma}
Let $M_1=(E_1,r_1)$ and $M_2=(E_2,r_2)$ be two $q$-matroids on trivially intersecting ground spaces. We define a $q$-matroid $M=(E,r)$ on the ground space $E=E_1\oplus E_2$. Then $M$ satisfies the properties of Definition \ref{def-directsum1} if and only if for each $A\subseteq E_1$ and $B\subseteq E_2$ it holds $r(A+B)=r_1(A)+r_2(B)$.
\end{Theorem}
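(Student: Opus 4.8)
The plan is to prove the two implications separately, with essentially all the work concentrated in the forward direction, where submodularity (R3) does the job. Throughout I will use two elementary consequences of $E_1\cap E_2=\{0\}$ together with the modularity of $\kL(E)$: for any $A\subseteq E_1$ and $B\subseteq E_2$,
\[
(A+E_2)\cap(E_1+B)=A+B,\qquad (A+E_2)+(E_1+B)=E,
\]
and moreover $(A+B)+E_2=A+E_2$ while $(A+B)\cap E_2=B$. The first identity follows by writing an element of the intersection as $a+e_2=e_1+b$ and noting $a-e_1=b-e_2\in E_1\cap E_2=\{0\}$; the remaining ones are immediate. I will also record that $E_2\subseteq X\subseteq E$ forces $X=(X\cap E_1)\oplus E_2$ by modularity, so that every subspace of $E/E_2$ is represented by some $A\oplus E_2$ with $A\subseteq E_1$ (and symmetrically for $E/E_1$).

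Assuming the rank formula, I would simply compute the four minors. For restriction, $r_{M|_{E_1}}(A)=r(A)=r(A+\{0\})=r_1(A)$ for $A\subseteq E_1$, so $M|_{E_1}=M_1$, and symmetrically $M|_{E_2}=M_2$. For contraction, using the representatives above, $r_{M/E_2}\bigl((A\oplus E_2)/E_2\bigr)=r(A+E_2)-r(E_2)=\bigl(r_1(A)+r_2(E_2)\bigr)-r_2(E_2)=r_1(A)$, so through the canonical isomorphism $E/E_2\cong E_1$ we obtain $M/E_2\cong M_1$, and symmetrically $M/E_1\cong M_2$. This is exactly Definition \ref{def-directsum1}.

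For the converse I first read off the rank values forced by Definition \ref{def-directsum1}: the restrictions give $r(A)=r_1(A)$ and $r(B)=r_2(B)$, while the contractions, translated through $E/E_2\cong E_1$ and $E/E_1\cong E_2$, give $r(A+E_2)=r_1(A)+r_2(E_2)$ and $r(E_1+B)=r_1(E_1)+r_2(B)$; in particular $r(E)=r_1(E_1)+r_2(E_2)$. I then sandwich $r(A+B)$. For the upper bound I apply (R3) to the pair $A+E_2$ and $E_1+B$; by the two lattice identities this reads $r(E)+r(A+B)\le r(A+E_2)+r(E_1+B)$, and substituting the computed values cancels $r_1(E_1)+r_2(E_2)$ to leave $r(A+B)\le r_1(A)+r_2(B)$. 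For the lower bound I apply (R3) to the pair $A+B$ and $E_2$, where $(A+B)+E_2=A+E_2$ and $(A+B)\cap E_2=B$, obtaining $r(A+E_2)+r(B)\le r(A+B)+r(E_2)$, which simplifies to $r_1(A)+r_2(B)\le r(A+B)$. Combining the two inequalities yields the desired equality.

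The main obstacle is this converse direction: one must select exactly the right pairs of subspaces so that submodularity pinches $r(A+B)$ from both sides, and this succeeds only because the lattice identities force the relevant joins and meets to collapse to $E$, $A+E_2$, and $B$. The other point requiring care is the bookkeeping when transporting the contraction data through the canonical identifications $E/E_2\cong E_1$ and $E/E_1\cong E_2$ and checking that these identifications are rank-preserving; this is routine once one works with the representatives $A\oplus E_2$ rather than abstract cosets.
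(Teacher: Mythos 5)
Your proposal is correct and follows essentially the same strategy as the paper: read off the ranks of $A$, $B$, and the spaces containing $E_1$ or $E_2$ from the minor conditions, then sandwich $r(A+B)$ by two applications of (R3), with the reverse implication being a direct computation of the four minors. The only difference is cosmetic — the paper gets the upper bound from the pair $(A,B)$ and the lower bound from $(E_1,\,A+B)$, while you use $(A+E_2,\,E_1+B)$ and $(A+B,\,E_2)$, which are symmetric variants of the same submodularity trick.
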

\begin{proof}
First, assume $M$ satisfies the properties of Definition \ref{def-directsum1}. Note that for all $A\subseteq E_1$ we have $r(A)=r(M;A)=r(M|_{E_1};A)=r_1(A)$ and similarly, for all $B\subseteq E_2$ we have $r(B)=r_2(B)$. So we need to show that $r(A+B)=r(A)+r(B)$. We prove this by applying semimodularity multiple times. First we apply it to $A$ and $B$. Since $A\cap B=\{0\}$, we have $r(A\cap B)=0$ and (r3) gives us
\[ r(A+B)\leq r(A)+r(B). \]
We claim that $r(M;E_1+B)=r(M;E_1)+r(M;B)$.  
Indeed, $r(M/E_1;(E_1+B)/E_1)=r(M_2;B)$ by Definition \ref{def-directsum1}.
Moreover, by Definition \ref{restr}, $r(M/E_1;(E_1+B)/E_1)=r(M; E_1+B)-r(M;E_1)$. Summing up
\[r(M;B)=r(M_2;B)=r(M/E_1;(E_1+B)/E_1)=r(M; E_1+B)-r(M;E_1),\]
so $r(M;E_1+B)=r(M;E_1)+r(M;B)$. Now we apply (r3) to $E_1$ and $A+B$.
\begin{align*}
r(E_1)+r(A+B) & \geq r(E_1+(A+B))+r(E_1\cap(A+B)) \\
 & = r(B+E_1)+r(A) \\
 & = r(B)+r(E_1)+r(A).
\end{align*}
This implies that
\[ r(A+B)\geq r(A)+r(B). \]
Combining the two inequalities gives the desired equality: $r(A+B)=r(A)+r(B)$. \\
For the other implication, suppose that $r(A+B)=r_1(A)+r_2(B)$. The two conditions in Definition \ref{def-directsum1} are symmetric, so we only need to prove the first one. We show that the rank function on $M|_{E_1}$ is equal to the rank function on $M_1$. Let $A\subseteq E_1$. Then
\[ r(M|_{E_1};A)=r(M;A)=r_1(A)+r_2(0)=r_1(A). \]
Now for $M/E_2$, let $C\subseteq E$ such that $E_2\subseteq C$. Then we can write $C=A+E_2$ with $A\subseteq E_1$. Then
\[ r(M/E_2;C/E_2)=r(M;C)-r(M;E_2)=r_1(A)+r_2(E_2)-r(M;E_2)=r_1(A). \]
It follows that $M|_{E_1}$ and $M/E_2$ are both isomorphic to $M_1$.
\end{proof}

As mentioned, the classical case of this last theorem is exactly the definition of the rank in the direct sum of matroids. This implies that Definition \ref{def-directsum1}, when applied to the classical case, completely determines the direct sum. We will see in the next subsection that this is not the case in the $q$-analogue. \\

We will close this section with some small results that show that Definition \ref{def-directsum1} implies the rank of all spaces of dimension and codimension $1$. Note that the next results only depend on Definition \ref{def-directsum1}, with the exception of Lemma \ref{allredontop}.

\begin{Proposition}\label{prop-noloops}
Let $M$ be a $q$-matroid satisfying the properties of Definition \ref{def-directsum1}. Suppose $M_1$ has loop space $L_1$ and $M_2$ has loop space $L_2$. Then the loop space of $M$ is $L_1\oplus L_2$.
\end{Proposition}
\begin{proof}
Since loops come in subspaces \cite[Lemma 11]{JP18}, $L_1\oplus L_2$ in $E$ only contains loops. We will show $M$ contains no other loops.
Suppose, towards a contraction, that there is a loop $\ell$ in $M$ that is not in $L_1\oplus L_2$. By assumption, $\ell$ is not in $E_1$ or in $E_2$. First we apply the semimodular inequality to $E_1$ and $\ell$:
\begin{align*}
r(E_1+\ell)+r(E_1\cap\ell) & \leq r(E_1)+r(\ell) \\
r(E_1+\ell)+0 & \leq r(E_1)+0
\end{align*}
hence $r(E_1+\ell)=r(E_1)$. Now we consider the $1$-dimensional space $x=(E_1+\ell)\cap E_2$. We claim that this space has rank $1$. Towards a contradiction, suppose $r(x)=0$ hence $x\subseteq L_2$. Then $r(\ell+x)=0$. Let $y$ be the $1$-dimensional space $(\ell+x)\cap E_1$. It has rank $0$ because it is in $\ell+x$, hence $y\subseteq L_1$. Now we have $\ell\subseteq y+x\subseteq L_1\oplus L_2$, which is a contradiction to $\ell\not\subseteq L_1\oplus L_2$. We conclude that $r(x)=1$. \\
Now we apply the semimodular inequality to $E_1+\ell$ and $E_2$.
\begin{align*}
r((E_1+\ell)+ E_2)+r((E_1+\ell)\cap E_2) & \leq r(E_1+\ell)+r(E_2) \\
r(E_1+E_2)+1 & \leq r(E_1)+r(E_2)
\end{align*}
and this is a contradiction. So there are no loops outside $L_1\oplus L_2$ in $M$.
\end{proof}

In particular, since we know exactly what are the loops of the direct sum, we know that all other $1$-dimensional spaces have rank $1$. Dually, we can derive a similar result for the codimension-$1$ spaces.

The next Lemma holds for all $q$-matroids. It is the dual of the statement that loops come in subspaces.

\begin{Lemma}\label{allredontop}
Let $M=(E,r)$ be a $q$-matroid. Let $H$ be the intersection of all codimension $1$ spaces in $E$ of rank $r(M)-1$. Then the spaces $A$ such that $H\subseteq A\subseteq E$ are exactly all the elements of $\kL(E)$ such that $r(E)-r(A)=\dim E-\dim A$.
\end{Lemma}
\begin{proof}
Let $X$ be a codimension $1$ space such that $r(X)=r(E)-1$. Consider the dual $q$-matroid $M^*$. Then $r^*(X^\perp)=\dim X^\perp-r(E)+r(X)=1-r(E)+r(E)-1=0$. Hence $X^\perp$ is a loop in $M^*$. This implies that $H^\perp$ is the sum of all loops in $M^*$, hence it is the loop space of $M^*$ and there are no other loops in $M^*$. For any $A$ such that $H\subseteq A\subseteq E$ we have that $A^\perp\subseteq H^\perp$, so $A^\perp$ has rank $0$ in $M^*$. This implies
\begin{align*} r(E)-r(A) & =r(E)-(r^*(A^\perp)+\dim A-r^*(E)) \\
 & =r(E)+r^*(E)-\dim A \\
 & =\dim E-\dim A.
\end{align*}
Conversely, if $A$ is a subspace such that $r(E)-r(A)=\dim E-\dim A$, then $r^*(A)=0$ by the same calculation as above. This implies the only spaces $A$ for which it holds that $r(E)-r(A)=\dim E-\dim A$, are the spaces such that $H\subseteq A\subseteq E$.
\end{proof}

The next result is the dual of Proposition \ref{prop-noloops}.

\begin{Proposition}\label{prop-nocoloops}
Let $M$ be a $q$-matroid satisfying the properties of Definition \ref{def-directsum1}. Suppose $M_1$ and $M_2$ do not have any codimension $1$ spaces of rank $r(M_1)-1$ and $r(M_2)-1$, respectively. Then $M$ does not have any codimension $1$ spaces of rank $r(M)-1$.
\end{Proposition}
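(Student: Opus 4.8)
The plan is to deduce this statement from Proposition~\ref{prop-noloops} by dualising, since the ``codimension-$1$ spaces of rank $r(M)-1$'' are precisely the $\perp$-images of the loops of $M^*$. Concretely, if $H$ has codimension $1$ and $r(H)=r(M)-1$, then $\dim(H^\perp)=1$ and, by the rank formula of Definition~\ref{defdual},
\[ r^*(H^\perp)=\dim(H^\perp)-r(E)+r\big((H^\perp)^\perp\big)=1-r(M)+r(H)=0, \]
so $H^\perp$ is a loop of $M^*$; running the computation backwards (using $(M^*)^*=M$) shows that every loop of $M^*$ is the perp of such a space. The identical equivalence inside $E_1$ and inside $E_2$ converts the hypothesis into the statement that $M_1^*$ and $M_2^*$ are loopless, and the desired conclusion into the statement that $M^*$ is loopless.

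The second step is to check that $M^*$ is again a direct sum, in the sense of Definition~\ref{def-directsum1}, of $M_1^*$ and $M_2^*$. Because $\perp$ is an anti-isomorphism and $E=E_1\oplus E_2$, the spaces $E_1^\perp$ and $E_2^\perp$ form a complementary pair, $E=E_1^\perp\oplus E_2^\perp$. Feeding the four minor-isomorphisms of the hypothesis ($M|_{E_1}\cong M_1$, $M/E_2\cong M_1$, $M|_{E_2}\cong M_2$, $M/E_1\cong M_2$) into Theorem~\ref{DualRestrContr} yields the corresponding four minor-isomorphisms for $M^*$ relative to this decomposition; tracking the indices, $E_1^\perp$ carries $M_2^*$ and $E_2^\perp$ carries $M_1^*$, so that the restriction and the contraction attached to each factor agree and Definition~\ref{def-directsum1} is satisfied. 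Which factor lands on which complement is irrelevant here, since both $M_1^*$ and $M_2^*$ are loopless.

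With these two steps in place the argument closes immediately: Proposition~\ref{prop-noloops}, applied to the direct sum $M^*$ of the loopless $q$-matroids $M_1^*$ and $M_2^*$, gives that $M^*$ has no loops, and undoing the loop/codimension-$1$ dictionary of the first step shows that $M$ has no codimension-$1$ space of rank $r(M)-1$.

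I expect the only delicate point to be the bookkeeping in the second step: one must apply Theorem~\ref{DualRestrContr} with the induced dualities on the minors so that a restriction-description and a contraction-description of the same dual factor genuinely coincide, thereby confirming that $M^*$ meets Definition~\ref{def-directsum1}. Once that compatibility is verified, everything else is a mechanical translation through the involution $\perp$, and no new semimodular inequality of the type used in the direct proof of Proposition~\ref{prop-noloops} is needed. (Alternatively, if one prefers to avoid the dualisation entirely, the same conclusion can be reached by mimicking the proof of Proposition~\ref{prop-noloops} with meets and joins interchanged, starting from a hypothetical codimension-$1$ space $H$ of rank $r(M)-1$ and intersecting it with $E_1$ and $E_2$; but the duality route is shorter and makes the claimed parallel with Proposition~\ref{prop-noloops} transparent.)
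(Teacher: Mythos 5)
Your proof is correct, but it takes a genuinely different route from the paper's. The paper argues directly by submodularity: assuming a codimension-$1$ space $H$ of rank $r(M)-1$ exists, it observes that $H$ can contain neither $E_1$ nor $E_2$, cuts $H$ down to a codimension-$1$ subspace of $E_1$, and reaches a contradiction with one more application of (R3) --- essentially Proposition~\ref{prop-noloops} run upside down, i.e.\ the alternative you sketch in your final parenthesis. Your dualisation instead makes literal the paper's remark that this result ``is the dual of Proposition~\ref{prop-noloops}'': the dictionary between codimension-$1$ spaces of rank $r(M)-1$ and loops of $M^*$ is exactly right, and the verification that $M^*$ satisfies Definition~\ref{def-directsum1} with respect to $M_1^*$ and $M_2^*$ on $E=E_2^\perp\oplus E_1^\perp$ goes through as you describe. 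One caution on the bookkeeping you yourself flag as delicate: you must use the duality identities in the form $(M|_X)^*\cong M^*/X^{\perp}$ and $(M/X)^*\cong M^*|_{X^{\perp}}$; the first identity of Theorem~\ref{DualRestrContr} as printed ($M^*/X\cong M|_{X^\perp}$, with no dual on the right) would, taken literally, put $M_1$ rather than $M_1^*$ on the contraction side and break the matching of restriction and contraction. With the standard identities your argument closes cleanly. What your route buys is conceptual economy --- no new inequality is needed, and the self-duality of Definition~\ref{def-directsum1} is itself a reusable fact; what the paper's direct route buys is independence from the duality formalism and from the compatibility of $\perp$ with the induced dualities on $E_1$ and $E_2$.
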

\begin{proof}
Suppose, towards a contraction, that there is a codimension $1$ space $H$ of rank $r(M)-1$ in $M$. By construction, $H$ does not contain $E_1$ or $E_2$. So $E_1\cap H$ is of codimension $1$ in $E_1$, and by construction it has rank $r(E_1)$. \\
Now we apply the semimodular inequality to $E_1$ and $H$.
\begin{align*}
r(E_1\cap H)+r(E_1+H) & \leq r(E_1)+r(H) \\
r(E_1)+r(M) & \leq r(E_1)+r(M)-1
\end{align*}
and this is a contradiction. So there are no codimension $1$ spaces in of rank $r(M)$ in $M$.
\end{proof}

\subsection{Non-uniqueness of the first definition}\label{subsec-NotUnique}

In this section we show by example that Definition \ref{def-directsum1} does not uniquely define the direct sum of $q$-matroids. \\

Let $E=\mathbb{F}_2^4$ and let $M_1=M_2=U_{1,2}$. We will attempt to construct the direct sum $M=M_1\oplus M_2$. We assume that it has the properties from Definition \ref{def-directsum1}. So the $q$-matroid $M$ has at least two circuits: $E_1$ and $E_2$. Our goal is to determine $M$ completely. Note that Theorem \ref{RangoSomma} defines the rank for all subspaces of $E$ that can be written as a direct sum of a subspace of $E_1$ and a subspace of $E_2$. \\

All $1$-dimensional spaces in $E$ have rank $1$ because of Proposition \ref{prop-noloops} and by Proposition \ref{prop-nocoloops} all $3$-dimensional spaces in $E$ have rank $2$. This means that what is left to do is to decide for all $2$-dimensional spaces if they have rank $1$ or rank $2$, that is, whether they are a circuit or an independent space. We use the next lemma for this.

\begin{Lemma}\label{CircuitIntersection}
Let $M_1=M_2=U_{1,2}$ and let $M$ satisfy the conditions of Definition \ref{def-directsum1}. Let $C_1$ and $C_2$ be circuits of $M$ of dimension $2$. Then $\dim(C_1\cap C_2)\neq1$.
\end{Lemma}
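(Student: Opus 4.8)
The plan is to argue by contradiction and apply the semimodular inequality (R3), relying on the fact that the previous propositions have already determined the ranks of all $1$- and $3$-dimensional spaces of $M$. Assume, towards a contradiction, that $C_1$ and $C_2$ are $2$-dimensional circuits with $\dim(C_1\cap C_2)=1$. The dimension formula then gives
\[ \dim(C_1+C_2)=\dim C_1+\dim C_2-\dim(C_1\cap C_2)=2+2-1=3. \]

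Next I would pin down the four relevant ranks. Since $C_1$ and $C_2$ are circuits of dimension $2$, each is dependent and has all proper subspaces independent; as every $1$-dimensional space has rank $1$ (Proposition \ref{prop-noloops}), this forces $r(C_1)=r(C_2)=1$. The intersection $C_1\cap C_2$ is $1$-dimensional, so $r(C_1\cap C_2)=1$ by the same proposition. Finally, $C_1+C_2$ is $3$-dimensional, hence of codimension $1$ in $E$; by Proposition \ref{prop-nocoloops} it cannot have rank $r(M)-1=1$, and being contained in $E$ (of rank $2$) while containing a rank-$1$ line, it must have rank exactly $2$, so $r(C_1+C_2)=2$.

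With these values in hand, the semimodular inequality (R3) applied to $C_1$ and $C_2$ yields
\[ r(C_1+C_2)+r(C_1\cap C_2)\leq r(C_1)+r(C_2), \]
that is $2+1\leq 1+1$, or $3\leq 2$, which is absurd. Hence no such pair of circuits exists, and $\dim(C_1\cap C_2)\neq 1$. There is essentially no genuine obstacle here: all the real work has already been carried out in Propositions \ref{prop-noloops} and \ref{prop-nocoloops}, and the only point requiring a little care is the bookkeeping that forces $C_1+C_2$ to be exactly $3$-dimensional of rank $2$, rather than of some smaller rank, before semimodularity delivers the contradiction.
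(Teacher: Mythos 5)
Your proof is correct and follows essentially the same route as the paper: assume $\dim(C_1\cap C_2)=1$, deduce $\dim(C_1+C_2)=3$, use Propositions \ref{prop-noloops} and \ref{prop-nocoloops} to pin the ranks at $r(C_1)=r(C_2)=1$, $r(C_1\cap C_2)=1$, $r(C_1+C_2)=2$, and derive the contradiction $3\leq 2$ from semimodularity. The only difference is that you spell out the rank bookkeeping that the paper states just before the lemma and leaves implicit in the displayed inequality.
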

\begin{proof}
If $C_1=C_2$, the result is clear. So let $C_1\neq C_2$. Suppose, towards a contradiction, that $\dim(C_1\cap C_2)=1$. Then $\dim(C_1+C_2)=3$. Now apply semimodularity to $C_1$ and $C_2$.
\begin{align*}
r(C_1+C_2)+r(C_1\cap C_2) & \leq r(C_1)+r(C_2) \\
2+1 & \leq 1+1
\end{align*}
This is a contradiction, hence $\dim(C_1\cap C_2)\neq1$.
\end{proof}

This means that every $2$-dimensional space that intersects with either $E_1$ or $E_2$ is independent. A counting argument shows that there are only six $2$-dimensional spaces that have trivial intersection with both $E_1$ and $E_2$. Denote by $A$, $B$, $C$, $D$, $F$, $G$ the six $2$-spaces of unknown rank. The following is independent of a choice of basis for $E$, but for convenience, we can coordinatize the spaces in the following way:
\[ E_1=\left\langle \begin{array}{cccc} 1 & 0 & 0 & 0 \\ 0 & 1 & 0 & 0 \end{array}\right\rangle, \qquad
E_2=\left\langle \begin{array}{cccc} 0 & 0 & 1 & 0 \\ 0 & 0 & 0 & 1 \end{array}\right\rangle,
\]
\[ A=\left\langle \begin{array}{cccc} 1 & 0 & 0 & 1 \\ 0 & 1 & 1 & 0 \end{array}\right\rangle, \qquad
B=\left\langle \begin{array}{cccc} 1 & 0 & 1 & 0 \\ 0 & 1 & 1 & 1 \end{array}\right\rangle, \qquad
C=\left\langle \begin{array}{cccc} 1 & 0 & 1 & 1 \\ 0 & 1 & 0 & 1 \end{array}\right\rangle,
\]
\[ D=\left\langle \begin{array}{cccc} 1 & 0 & 1 & 0 \\ 0 & 1 & 0 & 1 \end{array}\right\rangle, \qquad
F=\left\langle \begin{array}{cccc} 1 & 0 & 0 & 1 \\ 0 & 1 & 1 & 1 \end{array}\right\rangle, \qquad
G=\left\langle \begin{array}{cccc} 1 & 0 & 1 & 1 \\ 0 & 1 & 1 & 0 \end{array}\right\rangle.
\]
Note that $\{E_1,E_2,A,B,C\}$ and $\{E_1,E_2,D,F,G\}$ form a \textbf{spread} in $E$ (a spread is a set of subspaces of the same dimension such that every $1$-dimensional space is in exactly one spread element \cite{segre}). The two spreads are isomorphic, in the sense that a change of basis of $E$ maps one to the other. Since $A$, $B$, and $C$ all intersect $D$, $F$, and $G$, deciding that at least one of $\{A,B,C\}$ is a circuit means $\{D,F,G\}$ are all independent, and vice versa. So, without loss of generality, we have completely determined the matroid $M$ if we have found which of the three $2$-dimensional spaces $A$, $B$, and $C$ are circuits and this implies that $D,F,G$ are all independent. 

\begin{Lemma}\label{3ContainsSpread}
Every $3$-dimensional space $T$ contains an element of the spread \[\{E_1,E_2,A,B,C\}.\]
\end{Lemma}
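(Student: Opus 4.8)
My plan is to set aside the rank function entirely and reduce the statement to a counting argument about one-dimensional subspaces (points) of $E=\mathbb{F}_2^4$. The projective space on $E$ has $(2^4-1)/(2-1)=15$ points, and the spread $\{E_1,E_2,A,B,C\}$ consists of five two-dimensional spaces, each containing exactly $(2^2-1)/(2-1)=3$ points. Since a spread is a set of pairwise trivially-intersecting subspaces covering $E$, these five triples of points partition the $15$ points of $E$. A three-dimensional space $T$ contains exactly $(2^3-1)/(2-1)=7$ points, and the goal is to show that one of the five triples lies entirely inside $T$.

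The crucial observation, and the step that makes the characteristic-two setting work, is the following closure property: if $T$ contains two of the three points of a spread element $S$, then $T$ contains all of $S$. Indeed, two distinct one-dimensional subspaces $\langle u\rangle,\langle v\rangle\subseteq S$ already span $S=\langle u,v\rangle$, and over $\mathbb{F}_2$ the remaining point of $S$ is $\langle u+v\rangle$, which lies in $T$ because $u,v\in T$. Consequently, for each spread element $S$ the intersection $T\cap S$ meets $0$, $1$, or all $3$ of the points of $S$; the value $2$ is impossible.

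I would then finish by pigeonhole. Writing $k_S$ for the number of points of spread element $S$ lying in $T$, the partition of the $15$ points gives $\sum_S k_S=7$, where the sum is over the five spread elements and each $k_S\in\{0,1,3\}$. If no $k_S$ equalled $3$, then every $k_S\le 1$ and the total would be at most $5<7$, a contradiction. Hence some $k_S=3$, meaning that spread element is contained in $T$, as required.

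The only real obstacle is recognising that naive point-counting alone is insufficient: distributing $7$ points among $5$ classes does not by itself force a full class, since $7\le 2\cdot 5$. The argument only closes once the characteristic-two closure property rules out the value $k_S=2$, collapsing the admissible counts to $\{0,1,3\}$ and pushing the relevant bound from $10$ down to $5$.
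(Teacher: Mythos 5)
Your proof is correct and is essentially the paper's argument: both distribute the seven points of $T$ over the five spread elements by pigeonhole and then use the fact that two distinct $1$-dimensional subspaces of a spread element span it (equivalently, over $\mathbb{F}_2$, force the third point into $T$), so that a count of $2$ is impossible and some spread element lies entirely in $T$. The only cosmetic difference is the order of the two steps — the paper first extracts a spread element meeting $T$ in at least two points and then upgrades to containment, while you first rule out the value $2$ and then apply pigeonhole.
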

\begin{proof}
This can be done via a counting argument and the pigeon hole principle. $T$ intersects all spread elements in dimension at least $1$, since $\dim E=4$. All $1$-dimensional subspaces of $E$ are by definition contained in exactly one spread element. There are five spread elements and seven $1$-dimensional subspaces in $T$, so there has to be a spread element that contains at least two $1$-dimensional subspaces of $T$, and hence intersects it in dimension $2$. But that means the whole spread element is contained in $T$.
\end{proof}

If $A$, $B$, and $C$ are all circuits, there are no other circuits because of Lemma \ref{CircuitIntersection} and axiom (C2). If not all of $A$, $B$, and $C$ are circuits, there have to be circuits of dimension $3$. These will be all the $3$-dimensional spaces that do not contain a circuit of dimension $2$. These circuits do, however, contain an element of the spread, by Lemma \ref{3ContainsSpread}. \\

We check the circuit axioms for this construction. (C1) and (C2) are clear. For (C3), notice that the sum of every pair of circuits is equal to $E$. Thus it is sufficient to show that every $3$-space contains a circuit. This is true by construction: a 3-space either contains a $2$-dimensional circuit, or it is a circuit itself. \\

We have seen that no matter what we decide for the independence of $A$, $B$, and $C$, we always get a $q$-matroid. This means that the properties of the direct sum as in Definition \ref{def-directsum1} are not enough to determine the direct sum completely: we can make a $q$-matroid with $2$, $3$, $4$ or $5$ circuits that all satisfy this definition.

\subsection{A small non-representable $q$-matroid}
 
As a byproduct of the example in the previous section, we find a non-representable $q$-matroid in dimension $4$. The existence of non-representable $q$-(poly)matroids was established and discussed in \cite{gluesing2021qpolyindep}. However, the example here is not included in their construction and it is also the smallest possible non-representable $q$-matroid. In the classical case, the smallest non-representable matroid is of size $8$ and rank $4$ (the V\'amos matroid). For $q$-matroids it is smaller: dimension $4$ and rank $2$.

\begin{Proposition}\label{MatrixAssoc}
Let $M$ be a representable $q$-matroid over $\mathbb{F}_2$ of rank $2$ and dimension $4$, with (at least) two circuits of dimension $2$ and no loops. Then the matrix representing $M$ has the shape
$$G:=\left[\begin{array}{cccc}
1 & \alpha & 0 & 0 \\
0 & 0 & 1 & \beta
\end{array}\right],$$
with $\alpha,\beta \in \mathbb{F}_{2^m}\setminus\mathbb{F}_2$, $m>1$.
\end{Proposition}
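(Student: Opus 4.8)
The plan is to start from the definition of representability and translate the combinatorial data of $M$ (rank $2$, dimension $4$, two $2$-dimensional circuits, no loops) into constraints on a $2\times 4$ matrix $G$ over some extension field $\mathbb{F}_{2^m}$. Since $M$ has rank $2$, the representing matrix $G$ has exactly $2$ rows, and since $E=\mathbb{F}_2^4$ we have $n=4$ columns. The absence of loops means that no $1$-dimensional subspace of $E$ has rank $0$; equivalently, if $y\in\mathbb{F}_2^4$ is a nonzero column vector then $Gy\neq 0$, so every column of $G$ is nonzero. The two $2$-dimensional circuits are exactly the two $2$-dimensional subspaces $A$ on which $r(A)=1$, i.e.\ on which the corresponding $2\times 2$ matrix $GY$ (with $Y$ having column space $A$) has rank $1$.

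\medskip

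First I would fix the standard basis $e_1,e_2,e_3,e_4$ of $E=\mathbb{F}_2^4$ and identify the two circuits with $E_1=\langle e_1,e_2\rangle$ and $E_2=\langle e_3,e_4\rangle$, as in the construction of Section \ref{subsec-NotUnique}. The key reduction is that, up to a change of basis of the row space (i.e.\ left multiplication by an invertible $2\times 2$ matrix over $\mathbb{F}_{2^m}$), I may bring $G$ into a convenient form adapted to these two circuits. Because $E_1$ is a circuit of rank $1$, the two columns of $G$ indexed by $e_1,e_2$ span a $1$-dimensional space over $\mathbb{F}_{2^m}$; likewise for the columns indexed by $e_3,e_4$ and $E_2$. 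After a suitable row operation I can arrange that the $E_1$-columns lie in the first coordinate line and the $E_2$-columns in the second coordinate line, which forces the off-diagonal zeros and produces the block shape displayed in the statement, namely $G=\begin{bmatrix}1 & \alpha & 0 & 0\\ 0 & 0 & 1 & \beta\end{bmatrix}$. Scaling the two rows lets me normalise the leading entries in the first and third columns to $1$, and the no-loop condition on the second and fourth columns forces $\alpha\neq 0$ and $\beta\neq 0$.

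\medskip

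The remaining and most delicate point is to show that $\alpha,\beta\notin\mathbb{F}_2$, i.e.\ that the scalars genuinely live in a proper extension. The argument here is that if $\alpha\in\mathbb{F}_2$ then, since $\alpha\neq 0$, we would have $\alpha=1$, so the first two columns would be equal; this would create additional $2$-dimensional subspaces of rank $1$ — specifically $\langle e_1+e_2\rangle$ would become a loop, or more to the point a third $2$-dimensional circuit would appear (e.g.\ a $2$-space meeting $E_1$ in $\langle e_1+e_2\rangle$ and $E_2$ in a line). This contradicts the hypothesis that $M$ has exactly two $2$-dimensional circuits. The same reasoning applies to $\beta$. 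I expect this step to be the main obstacle, because it requires carefully enumerating which $2$-dimensional subspaces $A\subseteq\mathbb{F}_2^4$ acquire rank $1$ under $G$ and checking that the only way to keep the circuit count at exactly two is to have $\alpha,\beta\in\mathbb{F}_{2^m}\setminus\mathbb{F}_2$; one must verify both that $\alpha,\beta\notin\mathbb{F}_2$ is necessary and that no \emph{other} normalisation of $G$ could realise the same $q$-matroid with scalars in $\mathbb{F}_2$.

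\medskip

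To organise the verification cleanly, I would compute $r(A)=\operatorname{rk}(GY)$ for each of the relevant $2$-dimensional $A$ by taking a basis of $A$ as the columns of $Y\in\mathbb{F}_2^{4\times 2}$ and checking when the resulting $2\times 2$ matrix over $\mathbb{F}_{2^m}$ is singular. The condition $\det(GY)=0$ becomes a polynomial relation in $\alpha,\beta$ over $\mathbb{F}_2$; the hypothesis that precisely $E_1$ and $E_2$ are the rank-$1$ $2$-spaces then pins down exactly which of these determinants must vanish and which must not, and the conclusion $\alpha,\beta\notin\mathbb{F}_2$ follows by ruling out the degenerate cases where extra determinants vanish. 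This is essentially a finite check once the shape of $G$ is fixed, so the conceptual work is entirely in the normalisation of $G$ and in the careful counting of circuits.
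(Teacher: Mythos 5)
Your proposal is correct and follows essentially the same route as the paper: use the rank-$1$ condition on the two circuits $E_1,E_2$ together with row operations to force the block shape $\left[\begin{smallmatrix}1&\alpha&0&0\\0&0&1&\beta\end{smallmatrix}\right]$, use the no-loop condition to get $\alpha,\beta\neq 0$, and then exclude $\alpha,\beta\in\mathbb{F}_2$. For that last step your decisive argument should be the loop one ($\alpha=1$ makes $G\cdot(1,1,0,0)^T=0$, so $\langle e_1+e_2\rangle$ would be a loop), which is exactly the paper's ``column operations over $\mathbb{F}_2$'' argument; your alternative reading via ``exactly two circuits'' should be dropped, since the proposition is later applied to $q$-matroids with more than two $2$-dimensional circuits.
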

\begin{proof}
Since $M$ has rank $2$ and dimension $4$, the shape of the matrix is 
\[ G:=\left[\begin{array}{cccc}
x_1 & x_2 & x_3 & x_4 \\
y_1 & y_2 & y_3 & y_4 
\end{array}\right], \]
with all entries in $\mathbb{F}_{2^m}$.
Without loss of generality we apply row reduction and get $x_1=1, y_1=0$.
Since there are no loops, the columns of $G$ cannot be all zero.
Consider now the two circuits. They are, without loss of generality, $E_1:=\langle(1,0,0,0),(0,1,0,0) \rangle$ and 
$E_2:=\langle(0,0,1,0),(0,0,0,1) \rangle$.
We have for $E_1$
$$\left[\begin{array}{cccc}
1 & x_2 & x_3 & x_4 \\
0 & y_2 & y_3 & y_4 
\end{array}\right] \cdot \left[\begin{array}{cc}
1 & 0 \\
0 & 1\\
0 & 0 \\
0 & 0
\end{array}\right] = \left[\begin{array}{cc}
1 & x_2 \\
0 & y_2
\end{array}\right],$$
whose rank must be one, leading to $y_2=0$.
Similarly, for $E_2$ we have
$$\left[\begin{array}{cccc}
1 & x_2 & x_3 & x_4 \\
0 & y_2 & y_3 & y_4 
\end{array}\right] \cdot \left[\begin{array}{cc}
0 & 0 \\
0 & 0\\
1 & 0 \\
0 & 1
\end{array}\right] = \left[\begin{array}{cc}
x_3 & x_4 \\
y_3 & y_4
\end{array}\right],$$
whose rank must be one, leading to the fact that $(x_3,x_4)$ and $(y_3,y_4)$ are scalar multiples. By row reduction we can conclude that $x_3=x_4=0$ and the absence of loops implies that $x_2,y_3,y_4\neq 0$. We can finally set, again by row reduction, $y_3=1$.
Note that column operations over the ground field $\mathbb{F}_2$ give an isomorphic $q$-matroid, so we have that $x_2$ and $y_4$ are elements of $\mathbb{F}_{2^m}$ but not of $\mathbb{F}_2$.
\end{proof}

\begin{Theorem}\label{NonReprDim4}
If the $q$-matroid from Section \ref{subsec-NotUnique} is representable, it cannot have $4$ circuits of dimension $2$. This gives an example of a non-representable $q$-matroid.
\end{Theorem}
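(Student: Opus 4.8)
The plan is to argue by contradiction. Suppose the direct-sum $q$-matroid $M=U_{1,2}\oplus U_{1,2}$ of Section~\ref{subsec-NotUnique} is representable and has \emph{exactly} four circuits of dimension $2$; I will produce a fifth. Since $E_1,E_2$ are circuits and $M$ has no loops, the argument of Proposition~\ref{MatrixAssoc} applies unchanged — it uses only that $E_1,E_2$ are circuits and that there are no loops, never that these are the \emph{only} circuits — so after choosing coordinates with $E_1=\langle e_1,e_2\rangle$, $E_2=\langle e_3,e_4\rangle$, the $q$-matroid $M$ is represented over some $\mathbb{F}_{2^m}$ by
\[ G=\left[\begin{smallmatrix}1&\alpha&0&0\\0&0&1&\beta\end{smallmatrix}\right],\qquad \alpha,\beta\in\mathbb{F}_{2^m}\setminus\mathbb{F}_2, \]
with $E_1,E_2$ the two distinguished circuits.

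Next I would make the candidate circuits explicit. By Lemma~\ref{CircuitIntersection} every $2$-dimensional circuit other than $E_1,E_2$ meets both $E_1$ and $E_2$ trivially, so it is a common complement $W=\langle(1,0,a,c)^{\top},(0,1,b,d)^{\top}\rangle$ with $\left(\begin{smallmatrix}a&b\\c&d\end{smallmatrix}\right)\in GL_2(\mathbb{F}_2)$, and there are exactly six such spaces. Applying $G$ gives $G(1,0,a,c)^{\top}=(1,\,a+c\beta)$ and $G(0,1,b,d)^{\top}=(\alpha,\,b+d\beta)$, so $W$ has rank $1$ exactly when the $2\times 2$ determinant vanishes, i.e.\ (in characteristic $2$)
\[ a\alpha+b+c\alpha\beta+d\beta=0. \]
Thus the number of $2$-dimensional circuits of $M$ beyond $E_1,E_2$ equals the number of these six conditions that hold for the given $\alpha,\beta$.

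The heart of the proof is a short computation. The four assumed circuits are $E_1,E_2$ and two complements $X,Y$; by Lemma~\ref{CircuitIntersection} we have $X\cap Y=\{0\}$, so $\{E_1,E_2,X,Y\}$ is a partial spread that extends to a spread of $E$ whose remaining member $Z$ is again a common complement of $E_1$ and $E_2$. I would then show that the circuit conditions of $X$ and $Y$ \emph{force} that of $Z$. For a representative choice of the three complements in this spread the conditions read $\alpha+\beta=0$, $1+\beta+\alpha\beta=0$ and $1+\alpha+\alpha\beta=0$, and any two of them imply the third: the first two yield $\beta=\alpha$ and $\alpha^2+\alpha+1=0$ (so $\alpha\in\mathbb{F}_4\setminus\mathbb{F}_2$, consistent with $\alpha\notin\mathbb{F}_2$), whence the third condition also holds. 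Hence $Z$ is a circuit, $M$ has at least five $2$-dimensional circuits, and the assumption of exactly four is contradicted. It follows that every representable $q$-matroid of this shape has $2$, $3$ or $5$ two-dimensional circuits; as the four-circuit configuration is a genuine $q$-matroid by Section~\ref{subsec-NotUnique}, it is non-representable.

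I expect the technical heart to be the last two steps: identifying the six common complements with $GL_2(\mathbb{F}_2)$, reading off the determinantal condition, and verifying the ``any two imply the third'' relation among the three conditions attached to a single spread. One must also justify treating only one representative spread, which is legitimate because an $\mathbb{F}_2$-change of basis of $E$ permutes these configurations (and interchanges the two spreads, as observed in Section~\ref{subsec-NotUnique}) while preserving both the $q$-matroid structure and representability. The passage to the normal form $G$ is routine once Proposition~\ref{MatrixAssoc} is in hand.
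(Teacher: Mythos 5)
Your proof is correct and follows essentially the same route as the paper: normalise the representation via Proposition \ref{MatrixAssoc}, compute the vanishing-determinant condition for each of the six common complements of $E_1$ and $E_2$, and use the fact that within each spread the three conditions are linearly dependent (they sum to zero), so any two force the third and the number of $2$-dimensional circuits can only be $2$, $3$ or $5$. The only real difference is presentational: you derive the six determinants by hand from the $\mathrm{GL}_2(\mathbb{F}_2)$ parametrisation of the common complements and phrase the conclusion as a contradiction, whereas the paper records the determinants from a computer search and enumerates the possible cases directly.
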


\begin{proof}
We know the representation is of the form
\[ \left[\begin{array}{cccc}
1 & \alpha & 0 & 0 \\
0 & 0 & 1 & \beta
\end{array}\right], \]
with $\alpha,\beta \in \mathbb{F}_{2^m}\setminus\mathbb{F}_2$ by Proposition \ref{MatrixAssoc}.
Consider 
\[ \left[\begin{array}{cccc}
1 & \alpha & 0 & 0 \\
0 & 0 & 1 & \beta
\end{array}\right] \cdot 
\left[\begin{array}{cc}
a_0 & b_0  \\
a_1 & b_1 \\ 
a_2 & b_2 \\ 
a_3 & b_3 
\end{array}\right] =
\left[\begin{array}{cc}
a_0+a_1\alpha & b_0+b_1\alpha  \\
a_2+a_3\beta & b_2+b_3\beta
\end{array}\right]. 
\]
In order to have a circuit of dimension $2$, the determinant of this $2\times 2$ matrix should be zero. In particular, we need to have proportional columns.
This automatically tells us that $a_0=a_1=0$ implies $b_0=b_1=0$, and that $a_2=a_3=0$ implies $b_2=b_3=0$.
These two cases correspond to the two circuits $E_1$ and $E_2$ from Proposition \ref{MatrixAssoc}. Using the representations from Section \ref{subsec-NotUnique}, we found the determinants of all $2$-dimensional spaces $A,B,C,D,F,G$. They are the following:
\begin{itemize}
    \item[$A$:] $\alpha \beta+1$
    \item[$B$:] $\alpha+\beta +1$
    \item[$C$:] $\alpha \beta+\beta+ \alpha$
    \item[$D$:] $\alpha+\beta$
    \item[$F$:] $\alpha \beta+\beta+1$
     \item[$G$:] $\alpha\beta +\alpha+1$
\end{itemize}

Now, it is easy to see that if $A$ and $B$ vanish, then $C$ vanishes as well, and the same goes for $D$, $F$ and $G$. We already saw that circuits appear either in $\{A,B,C\}$ or $\{D,E,F\}$ and the other spaces are independent.
Therefore, the alternatives we have are:
\begin{itemize}
    \item none of the six determinants above vanishes, so $E_1$ and $E_2$ are the only circuits of dimension $2$;
    \item one determinant vanishes, so we have three circuits of dimension $2$;
    \item the determinants of all the elements in a spread vanish, leading to five circuits of dimension $2$ (that are all circuits in the $q$-matroid). \qedhere
\end{itemize}
\end{proof}

\begin{Corollary}
The $q$-matroid over $\mathbb{F}_2$ of rank $2$ and dimension $4$ with four circuits, as described in Theorem \ref{NonReprDim4}, is the smallest non-representable $q$-matroid.
\end{Corollary}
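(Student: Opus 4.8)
The statement has two components: the displayed $q$-matroid is non-representable, which is exactly the content of Theorem \ref{NonReprDim4}, and it has minimal dimension among non-representable $q$-matroids. Interpreting \emph{smallest} as smallest dimension of the ground space, the plan is to show that every $q$-matroid of dimension $n\le 3$ is representable. The organising observation is that for $n\le 3$ every rank $k\in\{0,1,\dots,n\}$ satisfies $k\le 1$ or $k\ge n-1$, whereas for $n=4$ the rank $k=2$ is the first one that is neither; this is precisely the rank of the example, which explains why dimension $4$ is the threshold.

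First I would dispose of the rank $\le 1$ cases. A rank-$0$ $q$-matroid is all loops and is represented by the empty $0\times n$ matrix. For rank $1$, I would use that such a $q$-matroid is completely determined by its loop space $L=\cl(\{0\})$: since loops come in subspaces \cite[Lemma 11]{JP18}, a subspace $A$ has $r(A)=0$ iff $A\subseteq L$ and $r(A)=1$ otherwise. To represent it I pick a $1\times n$ matrix $G$ over a sufficiently large extension $\mathbb{L}/K$ that vanishes on $L$ and is nonzero on every vector outside $L$; concretely $G$ factors through $E/L$, and I choose its entries (relative to a basis of $E/L$) to be $K$-linearly independent elements of $\mathbb{L}$, so that $GY_A\neq 0$ exactly when $A\not\subseteq L$. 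Such elements exist in a large enough extension (algebraic or transcendental), so this works over every field.

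Next I would handle the corank $\le 1$ cases by duality. The case $k=n$ is the free $q$-matroid, represented by the identity. For $k=n-1$ I use that duality sends a rank-$1$ $q$-matroid to a rank-$(n-1)$ one and that representability is preserved under duality: if $G$ represents $M$, a parity-check-type matrix $H$ whose rows span the orthogonal complement of the row space of $G$ represents $M^*$ (see \cite{JP18,BCIR21}). Hence every rank-$(n-1)$ $q$-matroid, being the dual of a representable rank-$1$ $q$-matroid, is representable. Combining the two ranges, for $n\le 3$ every achievable rank falls under $k\le 1$ or $k\ge n-1$, so every such $q$-matroid is representable.

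Together with Theorem \ref{NonReprDim4} this shows that dimension $4$ is the smallest dimension admitting a non-representable $q$-matroid and that the displayed example realises it. The two steps that need care are the rank-$1$ construction---verifying that a single $G$ can be made nonzero on \emph{all} nonzero points of $E/L$ simultaneously, which is exactly why passing to an extension field is necessary---and the claim that duality preserves representability, which is the main thing I would need to pin down precisely (matching the chosen involution $\perp$ to the orthogonal complement used for $H$).
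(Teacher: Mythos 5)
Your argument is correct in outline, but it takes a genuinely different route from the paper. The paper's proof of this corollary is a one-liner: it points to the appendix, where all $q$-matroids of dimension at most $3$ are enumerated (the classification there does use your rank~$\le 1$ / duality split to count them) and each one is exhibited with an explicit representing matrix. You instead prove representability abstractly for two structural classes -- rank at most $1$ and corank at most $1$ -- and observe that these classes exhaust all $q$-matroids on a ground space of dimension at most $3$. What your approach buys is generality and an explanation: it works uniformly over any base field without case-by-case matrices, and it makes transparent why $(n,k)=(4,2)$ is the first place non-representability can occur. What it costs is two supporting facts the paper gets for free from its explicit tables: (i) that a rank-$1$ $q$-matroid is determined by its loop space and is represented by a $1\times n$ matrix whose entries, read modulo the loop space, are $K$-linearly independent in the extension field -- your construction here is sound, and such elements always exist in a large enough (possibly transcendental) extension; and (ii) that duality preserves representability, i.e.\ that the $q$-matroid of a parity-check matrix is the dual $q$-matroid with respect to the chosen involution $\perp$. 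Point (ii) is standard for $q$-matroids of rank-metric codes (it is in \cite{JP18}), but you are right to flag it: it is the one step that genuinely needs a citation or a short proof, including the compatibility between the bilinear form defining $H$ and the anti-isomorphism $\perp$ used to define $M^*$. With those two facts pinned down, your proof is complete and is arguably more informative than the paper's appeal to the catalogue.
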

\begin{proof}
See the appendix for a list of all $q$-matroids with a ground space of dimension at most $3$. All of these are representable. Hence, the $q$-matroid from Theorem \ref{NonReprDim4} is the smallest non-representable $q$-matroid.
\end{proof}

\begin{example}\label{SomeExamplesRepresentables}
Consider the finite field $\FF_{64}$ and a primitive element $\alpha$ such that $\alpha^6=\alpha^4+\alpha^3+\alpha+1$. We give some examples of $q$-matroids of dimension $4$ and rank $2$, arising from our construction in Section \ref{subsec-NotUnique}, distinguishing them by the number of their circuits:
\begin{itemize}
    \item $\left[\begin{array}{cccc}
1 & \alpha^2 & 0 & 0 \\
0 & 0 & 1 & \alpha^7
\end{array}\right]$ represents a $q$-matroid with two $2$-dimensional circuits;
    \item $\left[\begin{array}{cccc}
1 & \alpha & 0 & 0 \\
0 & 0 & 1 & \alpha^8
\end{array}\right]$ represents a $q$-matroid with three $2$-dimensional circuits;
    \item $\left[\begin{array}{cccc}
1 & \alpha^{42} & 0 & 0 \\
0 & 0 & 1 & \alpha^{21}
\end{array}\right]$ represents a $q$-matroid with five $2$-dimensional circuits.
\end{itemize}
\end{example}

\begin{Remark}
Example \ref{SomeExamplesRepresentables} above also tells us something about the direct sum of two representable $q$-matroids. Suppose we have two representable $q$-matroids $M_1$ and $M_2$ over the same field $K$. Suppose these $q$-matroids are representable by matrices $G_1$ and $G_2$ over an extension field $L$ of $K$. One would expect the direct sum $M_1\oplus M_2$ to be representable by
\[ G=\left[\begin{array}{cc} G_1 & 0 \\ 0 & G_2 \end{array}\right]. \]
However, this construction is not uniquely defined, in the sense that it depends on the representations of $M_1$ and $M_2$. Over $\mathbb{F}_{64}$, we can represent the $q$-matroid $U_{1,2}$ as $\left[\begin{array}{cc} 1 & \beta \end{array}\right]$ for any $\beta\in\mathbb{F}_{64}\backslash\mathbb{F}_2$. Then the $q$-matroids of Example \ref{SomeExamplesRepresentables} are all of the form of the matrix $G$ above, so we would expect all of them to represent $U_{1,2}\oplus U_{1,2}$. However, these are not isomorphic $q$-matroids.
\end{Remark}

\section{Submodular functions and associated $q$-matroids}

Our goal is to define the direct sum of $q$-matroids in terms of matroid union. Before we can define that, we need some background on integer-valued increasing submodular functions. A function $f$ on the subspaces of $E$ is submodular if the following hold for all $A,B\subseteq E$:
\[ f(A+B)+f(A\cap B) \leq f(A)+f(B). \]
Such function can be viewed as the rank function of a $q$-polymatroid, and we refer to \cite{gluesing2021qpolyindep} for an extension of, and some overlap with, the results presented here.

The following proposition and corollary are the $q$-analogues of Proposition 11.1.1 and Corollary 11.1.2 in \cite{oxley}.

\begin{Proposition}\label{CircuitsOfMf}
Let $f$ be an integer-valued increasing submodular function on the subspaces of a finite-dimensional vector space $E$. Let
\[ \mathcal{C}(f)=\{C\subseteq E :C\text{ is  non-trivial and minimal w.r.t. inclusion  s.t. }f(C)<\dim(C)\}. \]
Then $\mathcal{C}(f)$ is the collection of circuits of a $q$-matroid $M(f)=(E,\mathcal{C}(f))$.
\end{Proposition}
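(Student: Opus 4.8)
The plan is to verify the three circuit axioms (C1)–(C3) from Definition \ref{circuit-axioms} for the collection $\mathcal{C}(f)$, since by the cryptomorphism quoted from \cite{BCR21} a collection satisfying these axioms is exactly the circuit set of a (unique) $q$-matroid. The axioms (C1) and (C2) should follow almost directly from the definition of $\mathcal{C}(f)$ as the \emph{minimal, non-empty} subspaces $C$ with $f(C)<\dim C$; the real work is (C3).

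First I would dispose of (C1): the zero space $\{0\}$ has $\dim(\{0\})=0$, and since $f$ is increasing and integer-valued we need $f(\{0\})\geq 0$ (this uses that $f$ takes the value of a rank-type function; more carefully, minimality of $\{0\}$ would require $f(\{0\})<0$, which an increasing integer-valued submodular function arising as a rank bound cannot satisfy — or simply, $\{0\}$ is excluded by the ``non-empty'' clause interpreted as $C\neq\{0\}$). Then (C2) is immediate: if $C_1,C_2\in\mathcal{C}(f)$ with $C_1\subseteq C_2$, then $C_1$ is itself a non-empty subspace with $f(C_1)<\dim C_1$, so by the minimality built into the definition of $C_2$ we cannot have $C_1\subsetneq C_2$, forcing $C_1=C_2$.

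The heart of the proof is (C3). Given distinct $C_1,C_2\in\mathcal{C}(f)$ and a codimension-$1$ subspace $X$, I must produce a circuit $C_3\subseteq (C_1+C_2)\cap X$. The natural strategy, mirroring the classical Oxley argument, is to show that $(C_1+C_2)\cap X$ \emph{itself contains some dependent space}, i.e. a non-empty subspace $D$ with $f(D)<\dim D$; then any minimal such $D$ is a circuit contained in $(C_1+C_2)\cap X$, giving $C_3$. To find such a $D$, I would apply submodularity to $C_1$ and $C_2$: since each $C_i$ is a circuit, $f(C_i)=\dim C_i - 1$ (it is minimally dependent, so every proper subspace is independent and the rank deficiency is exactly one), and
\[ f(C_1+C_2)+f(C_1\cap C_2)\leq f(C_1)+f(C_2)=\dim C_1+\dim C_2-2. \]
Because $C_1\neq C_2$ are both minimal, $C_1\cap C_2$ is a \emph{proper} subspace of each, hence independent, so $f(C_1\cap C_2)=\dim(C_1\cap C_2)$. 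Combining with the modular identity $\dim(C_1+C_2)+\dim(C_1\cap C_2)=\dim C_1+\dim C_2$ yields $f(C_1+C_2)\leq \dim(C_1+C_2)-2$, so $C_1+C_2$ has rank deficiency at least $2$.

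The final step is to intersect with $X$. Here I would use submodularity once more, this time on $C_1+C_2$ and $X$, together with the fact that $X$ has codimension $1$: the intersection $(C_1+C_2)\cap X$ has dimension at least $\dim(C_1+C_2)-1$, while cutting by a single hyperplane can drop the rank by at most $1$, so the rank deficiency of $(C_1+C_2)\cap X$ remains at least $1$, i.e. $f\big((C_1+C_2)\cap X\big)<\dim\big((C_1+C_2)\cap X\big)$. This exhibits $(C_1+C_2)\cap X$ as a dependent (non-empty) space, and taking a minimal dependent subspace inside it gives the required circuit $C_3$. \textbf{The main obstacle} I anticipate is this last bookkeeping with $X$: I must be careful that ``cutting by a codimension-$1$ space lowers $f$ by at most $1$'' is genuinely forced by submodularity plus the increasing property (one compares $f(C_1+C_2)$ with $f((C_1+C_2)\cap X)$ via $f(C_1+C_2)+f\big((C_1+C_2)\cap X\big)$ against $f\big((C_1+C_2)+X\big)+f(X)$, and uses that $X$ is a hyperplane to bound the dimension drop), and that the resulting deficiency is strictly positive rather than merely non-negative.
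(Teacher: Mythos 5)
Your strategy coincides with the paper's: (C1) and (C2) fall out of the definition, and for (C3) you show that $(C_1+C_2)\cap X$ contains a rank-deficient subspace by first establishing $f(C_i)=\dim(C_i)-1$ and then combining submodularity with $f(C_1\cap C_2)\geq\dim(C_1\cap C_2)$ to get $f(C_1+C_2)\leq\dim(C_1+C_2)-2$. The one step that does not survive scrutiny as written is the last one: ``cutting by a single hyperplane can drop the rank by at most $1$'' is a unit-increase property that an arbitrary integer-valued increasing submodular function need not have, and the submodularity comparison you sketch at the end, $f\bigl((C_1+C_2)+X\bigr)+f\bigl((C_1+C_2)\cap X\bigr)\leq f(C_1+C_2)+f(X)$, only bounds $f\bigl((C_1+C_2)\cap X\bigr)$ from \emph{above}; combined with $f(X)\leq f\bigl((C_1+C_2)+X\bigr)$ it reduces to plain monotonicity. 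Fortunately monotonicity is all you need, and it is exactly what the paper uses: $f\bigl((C_1+C_2)\cap X\bigr)\leq f(C_1+C_2)\leq\dim(C_1+C_2)-2\leq\dim\bigl((C_1+C_2)\cap X\bigr)-1$, the last inequality because intersecting with a codimension-$1$ space lowers the dimension by at most $1$ (and if $C_1+C_2\subseteq X$, or $C_i\subseteq X$ for some $i$, the conclusion is immediate anyway). One further small correction: a proper nonzero subspace of a circuit satisfies $f\geq\dim$, not $f=\dim$ --- nothing forces $f\leq\dim$ for a general submodular $f$ --- so you should write $f(C_1\cap C_2)\geq\dim(C_1\cap C_2)$; your chain of inequalities only uses this direction in any case.
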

\begin{proof}
We prove that $\mathcal{C}(f)$ satisfies the circuit axioms from Definition \ref{circuit-axioms}. The axiom (C1) holds by definition and, by minimality, we have (C2).\\
Let us now prove (C3). Let $C_1 \neq C_2$ be two elements of $\mathcal{C}(f)$ and let $X$ be a codimension $1$ space containing neither $C_1$ nor $C_2$ (otherwise the assertion holds void).
We have $C_i \cap X \subsetneq C_i$ for $i=1,2$. Therefore, 
$C_i \cap X \notin \mathcal{C}(f)$ by (C2) and so $\dim(C_i \cap X) \leq f(C_i \cap X)$ for $i=1,2$. Since $f$ is increasing,
\[
\dim(C_i \cap X) \leq f(C_i \cap X) \leq f(C_i) < \dim(C_i)
\]
and $\dim(C_i\cap X)=\dim(C_i)-1$, we have $\dim(C_i)-1 =f(C_i)$.

Since $f$ is increasing, is suffices to show $f((C_1+C_2)\cap X)<\dim((C_1+C_2)\cap X)$, because then $(C_1+C_2)\cap X$ contains a circuit. Now $f$ is increasing and submodular, so
\[
f((C_1+C_2)\cap X)\leq f(C_1+C_2)
\leq f(C_1)+f(C_2)-f(C_1 \cap C_2)\]
and because $C_1 \cap C_2 \subsetneq C_i$ for $i=1,2$, by minimality, $f(C_1 \cap C_2) \geq \dim(C_1 \cap C_2)$. Finally,
\begin{align*}
f((C_1+C_2)\cap X) & \leq f(C_1)+f(C_2)-f(C_1 \cap C_2) \\
 & = \dim(C_1)+\dim(C_2)-2-f(C_1 \cap C_2) \\
 & \leq \dim(C_1+C_2)-2 \\
 & =\dim((C_1+C_2)\cap X)-1.
\end{align*}
This shows that $M(f)$ is a $q$-matroid defined by its circuits $\mathcal{C}(f)$.
\end{proof}

The following is a direct result of the definition of $\mathcal{C}(f)$ and the fact that every proper subspace of a circuit is independent.

\begin{Corollary}\label{IndepInMf}
A subspace $I\subseteq E$ is independent in $M(f)$ if and only if $\dim(I')\leq f(I')$ for all nontrivial subspaces $I'$ of $I$.
\end{Corollary}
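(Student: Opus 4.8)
The plan is to read "independent" in the only sense available for $M(f)$: since Proposition \ref{CircuitsOfMf} presents $M(f)$ through its circuit set $\mathcal{C}(f)$, a subspace is independent exactly when it contains no circuit. With this cryptomorphic reading fixed, the whole corollary becomes a matter of translating the condition ``$I$ contains no circuit'' into the numerical condition ``$\dim(I')\leq f(I')$ for every nontrivial $I'\subseteq I$.'' So first I would record explicitly that, for $M(f)$, a subspace $I$ is independent iff no element of $\mathcal{C}(f)$ is contained in $I$.

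The one auxiliary fact that does the work is this: for a subspace $J\subseteq E$, if $f(J)<\dim(J)$ then $J$ contains a circuit of $M(f)$. Indeed, the family of nontrivial subspaces $K\subseteq J$ with $f(K)<\dim(K)$ is nonempty (it contains $J$ itself) and, because $\mathcal{L}(E)$ is finite, it has a minimal element; by the definition of $\mathcal{C}(f)$ that minimal element is a circuit contained in $J$. Conversely, any circuit $C$ is nontrivial by (C1) and satisfies $f(C)<\dim(C)$ by definition. These two observations are all that is needed.

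Having set this up, I would prove both implications directly. For the forward direction, assume $I$ is independent, so no circuit lies in $I$, and let $I'\subseteq I$ be nontrivial; were $f(I')<\dim(I')$, the auxiliary fact would place a circuit inside $I'$ and hence inside $I$, contradicting independence, so $\dim(I')\leq f(I')$. For the converse, assume $\dim(I')\leq f(I')$ for all nontrivial $I'\subseteq I$ and suppose, toward a contradiction, that $I$ is dependent; then $I$ contains a circuit $C$, which is nontrivial and satisfies $f(C)<\dim(C)$, so applying the hypothesis to $I'=C$ yields $\dim(C)\leq f(C)$, a contradiction. Hence $I$ is independent, and the equivalence follows. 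I do not expect any genuine obstacle here: the only points that require (trivial) care are the identification of independence with the absence of circuits, which is exactly the circuit description of $M(f)$ from Proposition \ref{CircuitsOfMf}, and the existence of a minimal violating subspace, which is immediate from the finiteness of $\mathcal{L}(E)$; this is why the statement is labelled a corollary rather than a theorem.
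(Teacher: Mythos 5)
Your proof is correct and fleshes out exactly the argument the paper leaves implicit: the paper's entire ``proof'' is the one-line remark that the corollary is a direct consequence of the definition of $\mathcal{C}(f)$ and the fact that every proper subspace of a circuit is independent, which is precisely the translation (independent $\Leftrightarrow$ contains no circuit $\Leftrightarrow$ no nontrivial $I'\subseteq I$ has $f(I')<\dim(I')$) that you carry out. One small repair: $\mathcal{L}(E)$ need not be finite since the field is arbitrary, but the minimal element you need still exists because $E$ has finite dimension, so among the nontrivial subspaces $K\subseteq J$ with $f(K)<\dim(K)$ one of minimal dimension is inclusion-minimal and hence a circuit.
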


The next theorem is the $q$-analogue of \cite[Chapter 8.1 Theorem 2]{welsh1976matroid}. We point out that Theorem \ref{thm-FunctionToMatroid} and Propsition \ref{IndepOfPoly} ware already proven in \cite[Theorem 3.9]{gluesing2021qpolyindep}, but with the minimum taken over the subspaces of $A$ instead of all spaces in $E$. See also Remark \ref{r-incl-only}.

\begin{Theorem}\label{thm-FunctionToMatroid}
Let $f$ be a non-negative integer-valued increasing submodular function on the subspaces of $E$ with $f(0)=0$. Then
\[ r(A)=\min_{X\subseteq E}\{f(X)+\dim(A)-\dim(A\cap X)\} \]
is the rank function of a $q$-matroid.
\end{Theorem}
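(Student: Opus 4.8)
The plan is to verify directly that $r$ satisfies the three rank axioms (R1)--(R3) from Definition \ref{rankfunction}. First I would observe that every value $f(X)+\dim(A)-\dim(A\cap X)$ is a non-negative integer: indeed $f(X)\geq 0$ and $\dim(A\cap X)\leq\dim(A)$. Hence the defining minimum is taken over a nonempty set of non-negative integers and is attained, so for each $A$ there exists a subspace realising $r(A)$ (no finiteness of $\kL(E)$ is needed). This same non-negativity gives the lower bound $r(A)\geq 0$, while evaluating the expression at $X=\{0\}$ gives $f(0)+\dim(A)-0=\dim(A)$, so $r(A)\leq\dim(A)$. This establishes (R1).

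For (R2) I would rewrite the defining expression using modularity of the dimension function, $\dim(A)-\dim(A\cap X)=\dim(A+X)-\dim(X)$. If $A\subseteq B$ then $A+X\subseteq B+X$, whence $\dim(A+X)\leq\dim(B+X)$ and therefore $\dim(A)-\dim(A\cap X)\leq\dim(B)-\dim(B\cap X)$ for every $X$. Taking $X$ to be a minimiser for $r(B)$ then yields $r(A)\leq f(X)+\dim(A)-\dim(A\cap X)\leq f(X)+\dim(B)-\dim(B\cap X)=r(B)$.

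The real work is (R3). Let $X$ be a minimiser for $r(A)$ and $Y$ a minimiser for $r(B)$. The idea is to test $A+B$ against $X+Y$ and $A\cap B$ against $X\cap Y$, which gives
\[ r(A+B)+r(A\cap B)\leq f(X+Y)+f(X\cap Y)+\big[\dim(A+B)-\dim((A+B)\cap(X+Y))\big]+\big[\dim(A\cap B)-\dim((A\cap B)\cap(X\cap Y))\big]. \]
I would then combine three ingredients: submodularity of $f$, namely $f(X+Y)+f(X\cap Y)\leq f(X)+f(Y)$; modularity of dimension, $\dim(A+B)+\dim(A\cap B)=\dim(A)+\dim(B)$; and the key inclusion $(A\cap X)+(B\cap Y)\subseteq(A+B)\cap(X+Y)$, together with the identity $(A\cap X)\cap(B\cap Y)=(A\cap B)\cap(X\cap Y)$. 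Writing $P=A\cap X$ and $Q=B\cap Y$, modularity gives $\dim(P+Q)+\dim(P\cap Q)=\dim(P)+\dim(Q)$, so
\[ \dim((A+B)\cap(X+Y))+\dim((A\cap B)\cap(X\cap Y))\geq\dim(P+Q)+\dim(P\cap Q)=\dim(A\cap X)+\dim(B\cap Y). \]
Substituting these facts into the displayed bound collapses its right-hand side exactly to $f(X)+f(Y)+\dim(A)+\dim(B)-\dim(A\cap X)-\dim(B\cap Y)=r(A)+r(B)$, which is (R3).

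I expect the main obstacle to be the dimension bookkeeping in (R3): selecting the correct test subspaces ($X+Y$ for the join and $X\cap Y$ for the meet) and checking the inclusion $(A\cap X)+(B\cap Y)\subseteq(A+B)\cap(X+Y)$. That inclusion holds because each of $A\cap X$ and $B\cap Y$ lies inside both $A+B$ and $X+Y$; once it is in hand, everything else is a direct application of modularity of dimension and submodularity of $f$. The remaining verifications (R1) and (R2) are routine by comparison.
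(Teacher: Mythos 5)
Your proposal is correct and follows essentially the same route as the paper: both prove (R3) by testing $A+B$ against $X+Y$ and $A\cap B$ against $X\cap Y$, reducing the dimension bookkeeping to the inclusion $(A\cap X)+(B\cap Y)\subseteq(A+B)\cap(X+Y)$ together with modularity of dimension and submodularity of $f$. The only cosmetic difference is that you fix minimisers and chain the inequalities upward, whereas the paper manipulates the minima over all pairs $X,Y$ directly.
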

\begin{proof}
We will prove that the function $r$ satisfies the rank axioms. It is clear that $r$ is integer-valued. It is non-negative because both $f(A)$ and $\dim(A)-\dim(A\cap X)$ are non-negative. By taking $X=\{0\}$ in the definition, we get $f(\{0\})+\dim(A)-\dim(\{0\})=\dim(A)$ and therefore $r(A)\leq\dim(A)$. This proves (r1). \\
In order to prove (r2), let $A\subseteq B\subseteq E$. Then for any $X\subseteq E$, we have that $\dim(B)-\dim(A)\geq\dim(B\cap X)-\dim(A\cap X)$. It follows that
\[ f(X)+\dim(A)-\dim(A\cap X) \leq f(X)+\dim(B)-\dim(B\cap X) \]
for all $X\subseteq E$ and thus $r(A)\leq r(B)$. The proof of (r3) is rather technical, but essentially a lot of rewriting. We first claim that
\begin{align*}
& \dim(A)-\dim(A\cap X)+\dim(B)-\dim(B\cap Y) \geq \\
& \quad \dim(A+B)-\dim((A+B)\cap(X+Y))+\dim(A\cap B)-\dim((A\cap B)\cap(X\cap Y)).
\end{align*}
This statement will be used later on in the proof. By using that
\[ \dim(A)+\dim(B)=\dim(A+B)+\dim(A\cap B)\]
and multiplying by $-1$ we can rewrite our claim as
\[ \dim(A\cap X)+\dim(B\cap Y) \leq  \dim((A+B)\cap(X+Y))+\dim((A\cap B)\cap(X\cap Y)). \]
Using the modular equality again, we get
\begin{align*}
\dim((A\cap B)\cap(X\cap Y)) &= \dim((A\cap X)\cap(B\cap Y)) \\
& = \dim(A\cap X)+\dim(B\cap Y)-\dim((A\cap X)+(B\cap Y))
\end{align*}
and thus our claim is equivalent to
\[ \dim((A\cap X)+(B\cap Y))\leq\dim((A+B)\cap(X+Y)).\]
To prove this, it is enough to show the inclusion of vector spaces $(A\cap X)+(B\cap A)\subseteq(A+B)\cap(X+Y)$. Let $\mathbf{a}\in A\cap X$ and $\mathbf{b}\in B\cap Y$ be two nonzero vectors. Then $\mathbf{a}+\mathbf{b}\in(A\cap X)+(B\cap Y)$. We prove that $\mathbf{a}+\mathbf{b}\in(A+B)\cap(X+Y)$. Because $\mathbf{a}\in A$ also $\mathbf{a}\in A+B$ and because $\mathbf{a}\in X$ also $\mathbf{a}\in X+Y$. So $\mathbf{a}\in(A+B)\cap(X+Y)$. By a similar reasoning, $\mathbf{b}\in(A+B)\cap(X+Y)$. So $\mathbf{a}+\mathbf{b}\in(A+B)\cap(X+Y)$ as was to be shown. This finishes the proof of our claim. \\
We can now get back to proving axiom (r3). In the third step we use the claim together with the submodularity of $f$. In the fourth step we set $U=X+Y$ and $V=X\cap Y$. This will not produce all possible $U,V\subseteq E$, so the minimum is at least as big as the minimum over all $U,V\subseteq E$.
\begin{align*}
\lefteqn{r(A)+r(B)} \\
 & = \min_{X\subseteq E}\{f(X)+\dim(A)-\dim(A\cap X)\} + \min_{Y\subseteq E}\{f(Y)+\dim(B)-\dim(B\cap Y)\} \\
 & = \min_{X,Y\subseteq E}\{f(X)+f(Y)+\dim(A)-\dim(A\cap X)+\dim(B)-\dim(B\cap Y)\} \\
 & \geq \min_{X,Y\subseteq E}\{ f(X+Y)+f(X\cap Y)+\dim(A+B)-\dim((A+B)\cap(X+Y)) \\
 & \qquad +\dim(A\cap B)-\dim((A\cap B)\cap(X\cap Y))\} \\
 & \geq \min_{U,V\subseteq E}\{f(U)+f(V)+\dim(A+B)-\dim((A+B)\cap U) \\
 & \qquad +\dim(A\cap B)-\dim((A\cap B)\cap V)\} \\
 & = \min_{U\subseteq E}\{f(U)+\dim(A+B)-\dim((A+B)\cap U)\} \\
 & \qquad + \min_{V\subseteq E}\{f(V)+\dim(A\cap B)-\dim((A\cap B)\cap V)\} \\
 & = r(A+B)+r(A\cap B).
\end{align*}
So the rank function $r$ satisfies all rank axioms (r1),(r2),(r3).
\end{proof}

\begin{Remark}\label{r-incl-only}
Note that the minimum in Theorem \ref{thm-FunctionToMatroid} is taken over all subspaces of $E$. This is convenient for some of the proofs, but not strictly necessary. Let $X\subseteq E$ and let $X'=A\cap X$. Then 
\[ f(X')+\dim(A)-\dim(A\cap X') \leq f(X)+\dim(A)-\dim(A\cap X) \]
because $f(X')\leq f(X)$ and $\dim(A\cap X')=\dim(A\cap X)=\dim(X')$. This means that the minimum over all subspaces $X\subseteq E$ is the same as the minimum taken only over the subspaces $X'\subseteq A$. This makes calculating the rank function a lot faster in practice.
\end{Remark}

The next proposition shows that the $q$-matroids from Corollary \ref{IndepInMf} and Theorem \ref{thm-FunctionToMatroid} are the same.

\begin{Proposition}\label{IndepOfPoly}
Let $f$ be a non-negative integer-valued increasing submodular function with $f(0)=0$. Let $M(f)$ be the corresponding $q$-matroid as defined in Corollary \ref{IndepInMf} with independent spaces $\mathcal{I}$. Let $r$ be the rank function as defined in Theorem \ref{thm-FunctionToMatroid}. Then both give the same $q$-matroid because $r(I)=\dim(I)$ for all $I\in\mathcal{I}$.
\end{Proposition}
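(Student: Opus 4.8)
The plan is to deduce that the two $q$-matroids coincide by showing they have the same collection of independent spaces, invoking the cryptomorphism recalled in Section \ref{PrelimSec} that a $q$-matroid is completely determined by its independent spaces. For the rank function $r$ of Theorem \ref{thm-FunctionToMatroid}, a subspace $I$ is independent precisely when $r(I)=\dim(I)$, whereas for $M(f)$ the independent spaces are $\mathcal{I}$, characterised by Corollary \ref{IndepInMf}. So it suffices to prove the equivalence $r(I)=\dim(I)\iff I\in\mathcal{I}$.

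First I would establish the direction named in the proposition: if $I\in\mathcal{I}$ then $r(I)=\dim(I)$. The inequality $r(I)\leq\dim(I)$ is immediate from (R1), or equivalently by taking $X=\{0\}$ in the defining minimum. For the reverse inequality I must show $f(X)+\dim(I)-\dim(I\cap X)\geq\dim(I)$ for every $X\subseteq E$, that is, $f(X)\geq\dim(I\cap X)$. Since $f$ is increasing, $f(X)\geq f(I\cap X)$, and since $I\cap X$ is a subspace of $I$, Corollary \ref{IndepInMf} gives $f(I\cap X)\geq\dim(I\cap X)$ (the case $I\cap X=\{0\}$ holding because $f(\{0\})=0$). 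Alternatively, Remark \ref{r-incl-only} allows me to take the minimum only over $X\subseteq I$, where $I\cap X=X$ and the required inequality $f(X)\geq\dim(X)$ is exactly the defining condition of $\mathcal{I}$.

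For the converse, suppose $I\notin\mathcal{I}$. By Corollary \ref{IndepInMf} there is a nontrivial subspace $I'\subseteq I$ with $f(I')<\dim(I')$. Taking $X=I'$ in the minimum of Theorem \ref{thm-FunctionToMatroid} and using $I\cap I'=I'$, we get
\[ r(I)\leq f(I')+\dim(I)-\dim(I')<\dim(I), \]
so $I$ is dependent for $r$ as well. Combining the two directions, the independent spaces of the two $q$-matroids agree, and hence the two $q$-matroids are equal.

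I do not expect a genuine obstacle here: the whole argument rests on selecting good witnesses in the minimum (namely $X=\{0\}$ for the upper bound and $X=I'$ for the dependent case) together with reading off the characterisation of $\mathcal{I}$ from Corollary \ref{IndepInMf}. The only point needing care is conceptual rather than computational, namely recognising that the single fact $r(I)=\dim(I)$ for $I\in\mathcal{I}$ yields only one inclusion of independent spaces, so the converse implication must also be verified in order to conclude that the two constructions literally produce the same $q$-matroid.
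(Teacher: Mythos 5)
Your proposal is correct and follows essentially the same route as the paper: both arguments restrict the minimum to subspaces of $I$ (via Remark \ref{r-incl-only}), obtain $r(I)\leq\dim(I)$ from the witness $X=\{0\}$, and observe that $r(I)\geq\dim(I)$ holds precisely when $f(I')\geq\dim(I')$ for every $I'\subseteq I$, which is the defining condition of $\mathcal{I}$. The paper packages the two directions you prove separately into a single chain of equivalences, but the content is identical.
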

\begin{proof}
We have to prove that $r(I)=\dim(I)$ iff $\dim(I')\leq f(I')$ for all nontrivial subspace $I'$ of $I$. Note that since $f(0)=0$, this holds for all subspaces $I'$ of $I$, also the trivial one. (Note that Proposition \ref{CircuitsOfMf} does not require $f(0)=0$, but Theorem \ref{thm-FunctionToMatroid} does.)
From the remark before we have that
\[ r(I)=\min_{I'\subseteq I}\{f(I')+\dim(I)-\dim(I')\}. \]
As already proven in Theorem \ref{thm-FunctionToMatroid}, $r(I)\leq\dim(I)$. For the other inequality, the following are equivalent:
\begin{align*}
I\in\mathcal{I}(M(f)) &\Leftrightarrow f(I')\geq\dim(I') \text{ for all }I'\subseteq I \\
 &\Leftrightarrow f(I')+\dim(I)-\dim(I')\geq\dim(I) \text{ for all }I'\subseteq I \\
 &\Leftrightarrow r(I)\geq\dim(I)
\end{align*}
This proves that $r(I)=\dim(I)$.
\end{proof}

\section{Matroid union}\label{MatUn}

In this section we define the $q$-analogue of matroid union by means of its rank function and we show what are the independent spaces.

\begin{Definition}\label{DefUnione}
Let $M_1$ and $M_2$ be two $q$-matroids on the same ground space $E$, with rank functions $r_1$ and $r_2$, respectively. Then the \textbf{matroid union} $M_1\vee M_2$ is defined by the rank function
\[ r(A)=\min_{X\subseteq A}\{r_1(X)+r_2(X)+\dim A-\dim X\} \]
\end{Definition}

\begin{Theorem}\label{Union_q-Matr}
Let $M_1$ and $M_2$ be two $q$-matroids on the same ground space $E$, with rank functions $r_1$ and $r_2$, respectively. Then the matroid union $M_1\vee M_2$ is a $q$-matroid. 
\end{Theorem}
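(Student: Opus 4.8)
The plan is to reduce the statement directly to Theorem \ref{thm-FunctionToMatroid}. That theorem already manufactures a $q$-matroid out of any non-negative, integer-valued, increasing, submodular function $f$ with $f(\{0\})=0$ via the formula $r(A)=\min_{X\subseteq E}\{f(X)+\dim(A)-\dim(A\cap X)\}$, and by Remark \ref{r-incl-only} this minimum may equivalently be taken only over subspaces $X\subseteq A$, in which case $\dim(A\cap X)=\dim(X)$. Hence the entire task is to exhibit the correct $f$ and check that it satisfies the hypotheses of that theorem.

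The natural choice is $f(X)=r_1(X)+r_2(X)$. With this choice the formula in Definition \ref{DefUnione} reads exactly $\min_{X\subseteq A}\{f(X)+\dim A-\dim X\}$, which is precisely the restricted form of the rank formula in Theorem \ref{thm-FunctionToMatroid}. It therefore remains only to verify the four required properties of $f$. Non-negativity and integrality are immediate from (R1) applied to $r_1$ and $r_2$, and the normalisation $f(\{0\})=0$ follows because (R1) forces $r_i(\{0\})=0$ for $i=1,2$. Monotonicity follows by adding the two instances of (R2), and submodularity follows by adding the two instances of (R3):
\[ f(A+B)+f(A\cap B)=\sum_{i=1,2}\bigl(r_i(A+B)+r_i(A\cap B)\bigr)\leq\sum_{i=1,2}\bigl(r_i(A)+r_i(B)\bigr)=f(A)+f(B). \]

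Once these verifications are in place, Theorem \ref{thm-FunctionToMatroid} immediately gives that $r$ satisfies (R1)--(R3), so $M_1\vee M_2$ is a $q$-matroid. I do not expect a genuine obstacle here: all of the analytic content, namely establishing the submodular inequality for the $\min$-formula, was already carried out in the proof of Theorem \ref{thm-FunctionToMatroid}. The one point requiring minor care is the bookkeeping of matching the two presentations of the rank function, that is, confirming via Remark \ref{r-incl-only} that minimising over $X\subseteq A$ with the term $\dim X$ agrees with minimising over all $X\subseteq E$ with the term $\dim(A\cap X)$. This is exactly the content of that remark and introduces no new argument.
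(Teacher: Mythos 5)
Your proposal is correct and follows exactly the paper's own argument: the paper likewise sets $f(X)=r_1(X)+r_2(X)$, checks that $f$ is non-negative, integer-valued, increasing and submodular with $f(0)=0$ by summing the rank axioms for $r_1$ and $r_2$, and then invokes Theorem \ref{thm-FunctionToMatroid} together with Remark \ref{r-incl-only} to match the two forms of the minimum. There is nothing to add.
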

\begin{proof}
For all $A\subseteq E$, define a function $f(A)=r_1(A)+r_2(A)$. We claim that $f$ is a non-negative integer-valued submodular function on the subspaces of $E$ with $f(0)=0$. \\
Note that $r_1$ and $r_2$ are non-negative integer valued submodular functions on the subspaces of $E$ with $r_1(\{0\})=r_2(\{0\})=0$. It follows directly that $f$ is a non-negative integer-valued function on the subspaces of $E$. It is increasing, because for all $A\subseteq B\subseteq E$ we have
\[ f(A)= r_1(A)+r_2(A)\leq r_1(B)+r_2(B)=f(B). \]
Furthermore, $f$ is submodular, because for all $A,B\subseteq E$ we have
\begin{align*}
f(A+B)+f(A\cap B) & = r_1(A+B)+r_2(A+B)+r_1(A\cap B)+r_2(A\cap B) \\
& \leq r_1(A)+r_1(B)+r_2(A)+r_2(B) \\
& = f(A)+f(B).
\end{align*}
Now we apply Theorem \ref{thm-FunctionToMatroid} and Remark \ref{r-incl-only} to the function $f$: this shows that the function $r$ of Definition \ref{DefUnione} is indeed the rank function of a $q$-matroid $(E,r)$.
\end{proof}

We gather some important properties of the matroid union.

\begin{Remark}\label{dependsOnCoordinates}
The matroid union is not always invariant under coordinatisation. That is, if $\varphi:\mathcal{L}(E)\longrightarrow\mathcal{L}(E)$ is a lattice isomorphism, then it is direct from the definition that $\varphi(M_1)\vee\varphi(M_2)=\varphi(M_1\vee M_2)$. However, $M_1\vee M_2$ is not necessarily isomorphic to $\varphi(M_1)\vee M_2$. We illustrate this with a small example. \\
Let $M_1$ and $M_2$ both be isomorphic to the mixed diamond, see \ref{qcd2}. That is: $\dim(E)=2$, $r(E)=1$ and $r(A)=1$ for all $1$-dimensional spaces except one loop. Suppose the loop is at the same coordinates for both $M_1$ and $M_2$, call this subspace $\ell$. Then the rank of $M_1\vee M_2$ is one, as we will show. Consider all $X\subseteq E$. If $\dim(X)=0$ or  $\dim(X)=2$ then the expression inside the minimum of Definition \ref{DefUnione} is equal to $2$. If $\dim(X)=1$ we have to distinguish between $\ell$ and the any other space. If $X=\ell$ the expression is $0+0+2-1=1$, otherwise it is $1+1+2-1=3$. Therefore, $r(E)=1$. \\
Consider now the case where the loop of $M_1$ is $\ell_1$ and the loop of $M_2$ is $\ell_2$, with $\ell_1\neq\ell_2$. Then the calculations are as before for $\dim(X)=0$ or $\dim(X)=2$. For $\dim(X)=1$ and $X\neq\ell_1,\ell_2$ we get $1+1+2-1=3$. If $X=\ell_1$ we get $0+1+2-1=2$, and similarly for $X=\ell_2$ we get $1+0+2-1=2$. So $r(E)=2$. \\
This example illustrates that we have to be careful to define $M_1$ and $M_2$ precisely, not just up to isomorphism.
\end{Remark}

We prove two straightforward lemmas concerning the matroid union.

\begin{Lemma}\label{AddGreenDoesNothing}
Let $M_1$ and $M_2$ be two $q$-matroids on the same ground space $E$ and let $r(M_2)=0$. Then $M_1\vee M_2=M_1$ and in particular, $M\vee U_{0,n}=M$.
\end{Lemma}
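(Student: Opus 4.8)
The plan is to compute the rank function of $M_1 \vee M_2$ directly from Definition~\ref{DefUnione} and show it equals $r_1$, the rank function of $M_1$. Since $r(M_2) = 0$ means $r_2(E) = 0$, axioms (R1) and (R2) force $r_2(A) = 0$ for every subspace $A \subseteq E$: indeed $0 \leq r_2(A) \leq r_2(E) = 0$. This is the key simplification, and it reduces the $f$ appearing in the union to $f(X) = r_1(X) + r_2(X) = r_1(X)$.

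With this in hand, the rank function of the union becomes
\[ r(A) = \min_{X \subseteq A}\{r_1(X) + \dim A - \dim X\}. \]
First I would show $r(A) \leq r_1(A)$ by taking $X = A$ in the minimum, which yields the term $r_1(A) + \dim A - \dim A = r_1(A)$. For the reverse inequality $r(A) \geq r_1(A)$, I would argue that for every $X \subseteq A$ the quantity $r_1(X) + \dim A - \dim X \geq r_1(A)$. This follows from submodularity (R3) of $r_1$ together with (R2): applying (R3) to $X$ and any space, or more directly using that $r_1$ is a rank function, one has $r_1(A) - r_1(X) \leq \dim A - \dim X$ whenever $X \subseteq A$, which is the standard ``unit-increase'' property of matroid rank functions (it follows by induction on $\dim A - \dim X$ from the fact that adding a $1$-dimensional space raises the rank by at most $1$, a consequence of (R1)--(R3)). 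Rearranging gives exactly $r_1(X) + \dim A - \dim X \geq r_1(A)$ for all $X \subseteq A$, hence $r(A) \geq r_1(A)$.

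Combining the two inequalities gives $r(A) = r_1(A)$ for all $A \subseteq E$, so $M_1 \vee M_2 = M_1$. The special case $M \vee U_{0,n} = M$ is then immediate, since $U_{0,n}$ is the $q$-matroid on $E$ with rank identically $0$, which certainly satisfies $r(U_{0,n}) = 0$.

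I expect the only mildly delicate step to be the reverse inequality, specifically justifying $r_1(A) - r_1(X) \leq \dim A - \dim X$ for $X \subseteq A$. This is the one place needing a small argument from the rank axioms rather than a direct substitution; everything else is bookkeeping. Since this monotone unit-increase property is standard for $q$-matroid rank functions and follows directly from (R1)--(R3), the proof is short.
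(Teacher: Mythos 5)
Your proof is correct and follows essentially the same route as the paper's: reduce $r_2$ to the zero function via (R1)--(R2), then show the minimum in Definition \ref{DefUnione} is attained at $X=A$. The paper simply asserts that the minimum occurs when $\dim X=\dim A$, whereas you supply the (correct, standard) justification via the unit-increase property $r_1(A)-r_1(X)\leq\dim A-\dim X$, which the paper elsewhere calls local semimodularity.
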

\begin{proof}
The rank function of the matroid union is equal to
\[ r(A)=\min_{X\subseteq A}\{r_1(X)+0+\dim A-\dim X\}. \]
By local semimodularity (Lemma \ref{localSemimodularity}), $r_1(X)-\dim(X)+\dim(A)\geq r_1(A)$ for all $X\subseteq A$ and equality is attained for $X=A$. Hence $r(A)=r_1(A)$ and $M_1\vee M_2=M_1$.
\end{proof}

\begin{Lemma}\label{IndipUnionFromM1M2}
Let $M_1$ and $M_2$ be $q$-matroids on the same ground space $E$. Let $I$ be independent in both $M_1$ and $M_2$. Then $I$ is independent in $M_1\vee M_2$. 
\end{Lemma}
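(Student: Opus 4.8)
The plan is to compute $r(I)$ directly from the defining formula of the matroid union and show it equals $\dim I$. By Definition \ref{DefUnione},
\[ r(I)=\min_{X\subseteq I}\{r_1(X)+r_2(X)+\dim I-\dim X\}. \]
The inequality $r(I)\leq\dim I$ is immediate: either invoke (R1) for the union (which is a $q$-matroid by Theorem \ref{Union_q-Matr}), or simply take $X=\{0\}$ in the minimum, which yields $r_1(\{0\})+r_2(\{0\})+\dim I=\dim I$. So the whole difficulty lies in the reverse inequality, and since $r(I)$ is a minimum, it suffices to show that every term in the minimum is at least $\dim I$, that is, $r_1(X)+r_2(X)\geq\dim X$ for every $X\subseteq I$.

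The key step is the observation that independence is hereditary: if $I$ is independent in a $q$-matroid, then every subspace $X\subseteq I$ is independent as well. I would prove this from the rank axioms as follows. First, adjoining a $1$-dimensional space raises the rank by at most one: for $x\nsubseteq X$ we have $X\cap x=\{0\}$, so (R3) gives $r(X+x)\leq r(X)+r(x)-r(X\cap x)\leq r(X)+1$, using $r(x)\leq1$ and $r(X\cap x)=0$. Iterating this along a chain of subspaces from $X$ up to $I$ yields $r(I)-r(X)\leq\dim I-\dim X$. Since $r(I)=\dim I$ by assumption, this forces $r(X)\geq\dim X$, and together with (R1) we conclude $r(X)=\dim X$. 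Applying this to both $M_1$ and $M_2$ gives $r_1(X)=r_2(X)=\dim X$ for all $X\subseteq I$.

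Substituting back, each term of the minimum becomes $\dim X+\dim X+\dim I-\dim X=\dim I+\dim X\geq\dim I$, with equality precisely when $X=\{0\}$. Hence $r(I)=\dim I$, so $I$ is independent in $M_1\vee M_2$, as claimed. The only nontrivial ingredient is the hereditary property of independence, which is where I expect the real work to be; in fact the argument shows slightly more, namely that independence of $I$ in $M_1$ alone already suffices (since $r_2(X)\geq0$), but I would keep both hypotheses in the statement to match the intended use of the lemma.
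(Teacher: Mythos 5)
Your proposal is correct and follows essentially the same route as the paper: both reduce the claim to the fact that every subspace $X\subseteq I$ satisfies $r_1(X)=r_2(X)=\dim X$ (heredity of independence) and then evaluate the minimum in Definition \ref{DefUnione} as $\min_{X\subseteq I}\{\dim I+\dim X\}=\dim I$. The only difference is that you spell out the proof of heredity from (R1)--(R3), which the paper simply asserts; your side remark that independence in $M_1$ alone already suffices is also correct.
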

\begin{proof}
We have that $r(M_1;I)=r(M_2;I)=\dim I$ by definition. Also, all subspaces of $I$ are independent. This means that
\begin{align*}
r(M_1\vee M_2;I) &= \min_{X\subseteq I}\{r(M_1;X)+r(M_2;X)+\dim I-\dim X\} \\
&= \min_{X\subseteq I}\{\dim X+\dim X+\dim I-\dim X\} \\
&= \dim I.
\end{align*}
We conclude that $I$ is independent in $M_1\vee M_2$.
\end{proof}

The independent spaces of the matroid union can be found in the following way.

\begin{Theorem}\label{indipUnion}
Let $M_1=(E,\mathcal{I}_1)$ and $M_2=(E,\mathcal{I}_2)$ be two $q$-matroids defined by their independent spaces. Then $I\subseteq E$ is an independent space of the matroid union $M_1\vee M_2$ if and only if for all $J\subseteq I$ there exist $I_1\in\mathcal{I}_1$ and $I_2\in\mathcal{I}_2$ such that $J=I_1\oplus I_2$. We notate the collection of independent spaces of $M_1\vee M_2$ by $\mathcal{I}$.
\end{Theorem}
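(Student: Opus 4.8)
I want to characterise independence in $M_1\vee M_2$ using the rank formula of Definition \ref{DefUnione} together with Corollary \ref{IndepInMf} applied to $f=r_1+r_2$. Recall that in the proof of Theorem \ref{Union_q-Matr} we saw that $M_1\vee M_2=M(f)$ where $f(A)=r_1(A)+r_2(A)$. So by Corollary \ref{IndepInMf}, a subspace $I$ is independent in $M_1\vee M_2$ if and only if $\dim(J)\leq f(J)=r_1(J)+r_2(J)$ for every nontrivial $J\subseteq I$. The whole problem therefore reduces to showing, for a fixed nontrivial subspace $J$, the equivalence
\[ \dim(J)\leq r_1(J)+r_2(J) \iff \text{there exist }I_1\in\mathcal{I}_1,\ I_2\in\mathcal{I}_2\text{ with }J=I_1\oplus I_2. \]
Once this local equivalence is established the theorem follows by quantifying over all nontrivial $J\subseteq I$.

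**The easy direction.** The implication $(\Leftarrow)$ should be quick. Suppose $J=I_1\oplus I_2$ with $I_i\in\mathcal{I}_i$. Then $r_1(I_1)=\dim I_1$ and $r_2(I_2)=\dim I_2$ by definition of independence, and $\dim J=\dim I_1+\dim I_2$ since the sum is direct. By monotonicity (R2), $r_1(J)\geq r_1(I_1)=\dim I_1$ and $r_2(J)\geq r_2(I_2)=\dim I_2$, so
\[ r_1(J)+r_2(J)\geq \dim I_1+\dim I_2=\dim J, \]
which is exactly what we need. This uses only (R2) and additivity of dimension over direct sums.

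**The hard direction and main obstacle.** The forward implication $(\Rightarrow)$ is where the real work lies: from the single inequality $\dim J\leq r_1(J)+r_2(J)$ I must produce an actual direct-sum decomposition $J=I_1\oplus I_2$ into independent pieces of the respective matroids. Since always $r_i(J)\leq\dim J$, the hypothesis forces $r_1(J)+r_2(J)\geq\dim J$, and I expect equality to be useful: write $k_i=r_i(J)$, so $k_1+k_2\geq\dim J$. The natural approach is to choose a maximal independent subspace (a ``basis of $J$'') $I_1\subseteq J$ in $M_1$, so $\dim I_1=r_1(J)=k_1$, and likewise $I_2\subseteq J$ maximal independent in $M_2$ with $\dim I_2=k_2$; the difficulty is that these need not be disjoint nor span $J$, so I cannot simply take them as the summands. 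I would instead need to argue — perhaps by a dimension count using $\dim(I_1\cap I_2)+\dim(I_1+I_2)=k_1+k_2\geq\dim J$ — that one can select the bases so that $I_1\cap I_2=\{0\}$ and $I_1+I_2=J$, making the sum direct and spanning. The subtlety is that the inequality only gives $\dim(I_1+I_2)\geq\dim J-\dim(I_1\cap I_2)$, so I must exploit that the hypothesis holds for \emph{all} nontrivial $J'\subseteq I$, not just for $J$ itself, in order to rule out a nontrivial intersection; a clean induction on $\dim J$ seems most promising, peeling off a one-dimensional subspace and invoking the hereditary hypothesis on a smaller space. I anticipate the careful handling of this intersection/spanning condition — turning a numerical inequality into a genuine geometric direct-sum decomposition — to be the main obstacle, and the place where the $q$-analogue genuinely departs from the classical matroid union proof.
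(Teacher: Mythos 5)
Your setup is the same as the paper's: you identify $M_1\vee M_2$ with $M(f)$ for $f=r_1+r_2$ and invoke Corollary \ref{IndepInMf} (together with Proposition \ref{IndepOfPoly}) to translate independence in the union into the condition $\dim J\leq r_1(J)+r_2(J)$ for all nontrivial $J\subseteq I$; your proof of the implication $(\Leftarrow)$ is correct and coincides with the paper's. The genuine gap is that you never prove the forward implication: what you offer is a description of the difficulty (``I would instead need to argue\dots'', ``a clean induction\dots{} seems most promising'', ``I anticipate\dots{} to be the main obstacle''), so the substantive half of the theorem is missing. There is also a structural misstep in your plan: you claim the theorem reduces to the \emph{pointwise} equivalence
\[ \dim(J)\leq r_1(J)+r_2(J) \iff J=I_1\oplus I_2 \text{ for some } I_1\in\mathcal{I}_1,\ I_2\in\mathcal{I}_2 \]
for a single fixed $J$. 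The forward direction of this pointwise statement is neither what the theorem needs nor something you establish; its classical analogue (the matroid covering condition) is false when imposed only on the set itself rather than on all its subsets, and you in fact concede the point when you later write that you ``must exploit that the hypothesis holds for all nontrivial $J'\subseteq I$.'' What is actually required, and what the paper proves, is the hereditary version: if $I$ is independent in $M_1\vee M_2$, then $I$ itself decomposes; since every subspace of an independent space is again independent in the union, every $J\subseteq I$ then decomposes too.

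The missing content is precisely the inductive step you only gesture at. The paper's argument is: take $J\subseteq I$ of codimension $1$, write $J=J_1\oplus J_2$ by the induction hypothesis, and show that some $1$-dimensional $x\subseteq I$ with $x\not\subseteq J$ extends one of the summands. Concretely, if $J_1\oplus x$ is dependent in $M_1$ for every such $x$, then $r_1(I)=\dim J_1$ (via \cite[Prop.~7]{JP18}); the inequality $r_1(I)+r_2(I)\geq\dim I$ then forces $r_2(I)\geq\dim J_2+1$, so not every such $x$ is a loop of $M_2$, and for a non-loop $x$ semimodularity applied to $J_2$ and $x$ gives $r_2(J_2\oplus x)\geq\dim J_2+1$, i.e.\ $J_2\oplus x\in\mathcal{I}_2$ and $I=J_1\oplus(J_2\oplus x)$. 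Your alternative sketch --- choose bases $I_1$ of $M_1|_J$ and $I_2$ of $M_2|_J$ and force $I_1\cap I_2=\{0\}$ by a dimension count --- does not work as stated: the inequality $\dim(I_1+I_2)\geq\dim J-\dim(I_1\cap I_2)$ you write down is vacuous, and you give no mechanism for re-selecting the bases. Until an argument of the paper's type (or a correct substitute) is carried out in detail, the proof is incomplete.
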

\begin{proof}
Let $f(A)=r_1(A)+r_2(A)$, as in the proof of Theorem \ref{Union_q-Matr}. According to Corollary \ref{IndepInMf} and Proposition \ref{IndepOfPoly}, we know that the independent spaces of the matroid union are exactly those $I\subseteq E$ such that for all nontrivial subspaces $J\subseteq I$ we have $\dim J\leq f(J)$. \\
First, let $I\subseteq E$ such that all nontrivial subspace $J\subseteq I$ can be written as $J=I_1\oplus I_2$ with $I_1\in\mathcal{I}_1$ and $I_2\in\mathcal{I}_2$. We need to prove that $I$ is independent in $M_1\vee M_2$, that is, for all $J\subseteq I$ it holds that $\dim J\leq f(J)$. This follows from
\[ \dim(J)=\dim(I_1)+\dim(I_2) = r_1(I_1)+r_2(I_2) \leq r_1(J)+r_2(J)=f(J), \] the inequality coming from the axiom (r2). \\
For the other implication, let $I$ be independent in $M_1\vee M_2$. We need to show that we can write all $I$ as $I=I_1\oplus I_2$ with $I_1\in\mathcal{I}_1$ and $I_2\in\mathcal{I}_2$. Because all subspaces of an independent space are independent, this proves the statement. \\
First, note that if $I$ is independent in $M_1\vee M_2$, then its rank is equal to its dimension: $\dim I=r(I)=\min_{X\subseteq I}\{r_1(X)+r_2(X)+\dim I-\dim X\}$. Therefore, for each $X \subseteq I$ it holds
\[ r_1(X)+r_2(X)-\dim X\geq 0. \]
We will proceed by mathematical induction on the dimension of $I$. If $\dim I=0$ then $I=\{0\}$ and we can write $\{0\}=\{0\}\oplus\{0\}$ where $\{0\}\in\mathcal{I}_1$ and $\{0\}\in\mathcal{I}_2$. If $\dim I=1$, then $r_1(I)+r_2(I)\geq1$ so $I$ is independent in at least one of $M_1$ and $M_2$. Without loss of generality, let $I$ be independent in $M_1$, then we can write $I=I\oplus\{0\}$ with $I\in\mathcal{I}_1$ and $\{0\}\in\mathcal{I}_2$. \\
Let $I$ be independent in $M_1\vee M_2$ with $\dim I=h+1$. Let $J\subseteq I$ with $\dim J=h$ and $J=J_1\oplus J_2$ for some $J_1\in\mathcal{I}_1$ and $J_2\in\mathcal{I}_2$. We will show that there is a $1$-dimensional $x\subseteq I-J$ such that either $J_1\oplus x\in\mathcal{I}_1$ or $J_2\oplus x\in\mathcal{I}_2$. \\
Assume that for all $x\subseteq I-J$ the space $J_1\oplus x$ is dependent in $M_1$. This implies that $r_1(J_1\oplus x)=r_1(J_1)$ and by \cite[Prop. 7]{JP18} we have $r_1(I)=r(J_1)=\dim J_1$. Since $I\subseteq I$ we have the following equivalent statements:
\begin{align*}
r_1(I)+r_2(I)-\dim I &\geq 0 \\
\dim J_1+r_2(I)-(\dim J_1+\dim J_2+1) &\geq 0 \\
r_2(I) &\geq \dim J_2+1
\end{align*}
and hence $r_2(I)\geq 1$. This implies that not all $x\subseteq I-J$ can be loops in $M_2$, because if they were, by semimodularity this would imply $r_2(I)=0$. So assume $x\subseteq I-J$ with $r_2(x)=1$. Then, by applying semimodularity in $M_2$ again, we get
\begin{align*}
r_2(J_2+x)+r_2(J_2\cap x) &\geq r_2(J_2)+r_2(x) \\
r_2(J_2\oplus x)+0 &\geq \dim J_2+1
\end{align*}
and it follows that $J_2\oplus x$ is independent in $M_2$. This gives the decomposition $I=J_1\oplus (J_2\oplus x)$ with $J_1\in\mathcal{I}_1$ and $J_2\oplus x\in\mathcal{I}_2$.
\end{proof}

\begin{Remark}
We want to point out that Theorem \ref{indipUnion} is indeed a $q$-analogue of the classical case. There the independent sets of the matroid union are defined by
\[ \mathcal{I}=\{I_1\cup I_2:I_1\in\mathcal{I}_1,I_2\in\mathcal{I}_2\}. \]
First of all, note that the union can be rewritten as a disjoint union. Let $I=J_1\cup J_2$ with $J_1\in\mathcal{I}_1$ and $J_2\in\mathcal{I}_2$. Take $I_1=J_1$ and $I_2=J_2-J_1$, then $I=I_1\sqcup I_2$. This procedure does not create a unique $I_1$ and $I_2$, there is a lot of choice involved. However, it does imply that every independent set $I$ of the matroid union is of the form $I=I_1\sqcup I_2$, and conversely, every $I=I_1\sqcup I_2$ is independent in the matroid union.

In the classical case, if $I=I_1\sqcup I_2$ then for all $\bar{J}\subseteq I$ we can write directly $\bar{J}=\bar{J}_1\sqcup \bar{J}_2$ with $\bar{J}_1=\bar{J}\cap I_1$ and $\bar{J}_2=\bar{J}\cap I_2$. Since $\bar{J}_1\subseteq I_1$ and $\bar{J}_2\subseteq I_2$, these are independent. This reasoning does not hold in the $q$-analogue (see also the Introduction), which is why we specifically have to state it in the definition. For a counterexample, see the example in Remark \ref{dependsOnCoordinates}: if $\ell_1=\ell_2=\ell$ we can write $E=I_1\oplus I_2$ for some $1$-dimensional $I_1$ and $I_2$ that are not equal to $\ell$, but we cannot write $\ell$ as the direct sum of independent spaces of $M_1$ and $M_2$.
\end{Remark}

\section{Matroid intersection and duality} \label{IntDual}

We complete our study of the matroid union for $q$-matroids by defining the dual operation, that is matroid intersection. We follow \cite[p.123]{welsh1976matroid}.

\begin{Definition}
Let $M_1$ and $M_2$ be $q$-matroids on the same ground space $E$ with collection of spanning spaces $\mathcal{S}(M_1)$ and $\mathcal{S}(M_2)$. Define the \textbf{$q$-matroid intersection} of $M_1$ and $M_2$ by its spanning spaces:
\[ \kS(M_1\wedge M_2)=\{S_1\cap S_2: S_1\in\kS(M_1), S_2\in\kS(M_2)\}. \]
\end{Definition}

We need to prove that it is a $q$-matroid. This can be done by checking the axioms for spanning spaces, but we can also do this by proving a more general result:
\begin{Theorem}\label{thm-intersection}
Let $M_1$ and $M_2$ be $q$-matroids on the same ground space $E$. Then
\[ M_1\wedge M_2=(M_1^*\vee M_2^*)^*. \]
\end{Theorem}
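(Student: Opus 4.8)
The plan is to prove the equality by matching the two $q$-matroids through the correspondence between spanning spaces and independent spaces induced by the duality map $\perp$. Since $\perp$ is an involutive anti-isomorphism of $\kL(E)$, the operation $M\mapsto M^*$ is an involution on $q$-matroids (Definition \ref{defdual}), so it suffices to identify the defining spanning spaces of $M_1\wedge M_2$ with the independent spaces of $M_1^*\vee M_2^*$ carried across by $\perp$.

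First I would record the dictionary between spanning and independent spaces under duality. From the dual rank formula $r^*(A)=\dim A-r(E)+r(A^\perp)$, a subspace $S$ satisfies $r(S)=r(E)$ (is spanning in $M$) if and only if $r^*(S^\perp)=\dim S^\perp$ (is independent in $M^*$). Hence $\mathcal{I}(M_i^*)=\{S^\perp:S\in\kS(M_i)\}$ for $i=1,2$, and applying the same equivalence to the $q$-matroid $M_1^*\vee M_2^*$ together with involutivity of $\perp$ yields
\[ \kS\big((M_1^*\vee M_2^*)^*\big)=\{I^\perp:I\in\mathcal{I}(M_1^*\vee M_2^*)\}. \]

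Next I would feed in the description of the independent spaces of the union from Theorem \ref{indipUnion}: a subspace $I$ is independent in $M_1^*\vee M_2^*$ if and only if every nontrivial $J\subseteq I$ can be written as $J=I_1\oplus I_2$ with $I_1\in\mathcal{I}(M_1^*)$ and $I_2\in\mathcal{I}(M_2^*)$. Translating each factor through the dictionary gives $I_1=S_1^\perp$ and $I_2=S_2^\perp$ for spanning $S_1\in\kS(M_1)$, $S_2\in\kS(M_2)$. Applying $\perp$ to the top-level decomposition $I=S_1^\perp\oplus S_2^\perp$ and using that the anti-isomorphism interchanges $\vee$ and $\wedge$ (so that $(S_1^\perp\vee S_2^\perp)^\perp=S_1\wedge S_2$, as recorded in the preliminaries) we obtain $I^\perp=S_1\cap S_2$. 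This already establishes the inclusion $\kS((M_1^*\vee M_2^*)^*)\subseteq\{S_1\cap S_2:S_i\in\kS(M_i)\}$, i.e.\ the defining spanning spaces of $M_1\wedge M_2$.

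The hard part will be the reverse inclusion: showing that every intersection $S_1\cap S_2$ of spanning spaces is genuinely spanning in $(M_1^*\vee M_2^*)^*$, equivalently that $(S_1\cap S_2)^\perp=S_1^\perp+S_2^\perp$ is independent in $M_1^*\vee M_2^*$. By Theorem \ref{indipUnion} this demands the splitting $J=I_1\oplus I_2$ not merely for $J=S_1^\perp+S_2^\perp$ but for \emph{every} nontrivial subspace $J$, and this is precisely where the $q$-analogue diverges from the classical case: in the Boolean lattice one splits every subset by intersecting with the two independent sets, whereas here a subspace of $S_1^\perp+S_2^\perp$ need not respect the decomposition (this is the same obstruction already flagged in the Introduction and in the remark following Theorem \ref{indipUnion}). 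I expect this to be the crux of the argument, requiring either a careful induction on $\dim J$ in the spirit of the proof of Theorem \ref{indipUnion}, or a direct verification of the spanning-space axioms for the collection $\{S_1\cap S_2\}$ — which is in any case exactly what one must check to confirm that this collection defines a $q$-matroid at all.
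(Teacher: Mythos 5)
Your setup is exactly the paper's: both arguments translate spanning spaces into independent spaces of the duals via $\perp$, invoke Theorem \ref{indipUnion} for the independent spaces of the union, and reduce the theorem to the identity $\{S^\perp : S\in\kS(M_1\wedge M_2)\}=\mathcal{I}(M_1^*\vee M_2^*)$. Your ``easy'' inclusion (an independent space $I$ of $M_1^*\vee M_2^*$ decomposes as $I_1\oplus I_2$, so $I^\perp=I_1^\perp\cap I_2^\perp=S_1\cap S_2$) is precisely the second half of the paper's proof.

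The gap is that you stop at the hard direction. You correctly identify the crux --- that for $S_1^\perp+S_2^\perp$ to be independent in $M_1^*\vee M_2^*$, Theorem \ref{indipUnion} demands a decomposition of \emph{every} nontrivial subspace $J\subseteq S_1^\perp+S_2^\perp$, not just the top one --- but you only speculate that this ``would require either a careful induction on $\dim J$ or a direct verification of the spanning-space axioms,'' without carrying out either. A proof needs this step. The paper closes it without induction, by a single lattice observation: every subspace $I'\subseteq S_1^\perp+S_2^\perp$ is $(S')^\perp$ for some superspace $S'\supseteq S_1\cap S_2$, and the paper asserts that any such $S'$ can be written as $T_1\cap T_2$ with $T_1\supseteq S_1$ and $T_2\supseteq S_2$; since superspaces of spanning spaces are spanning, $(S')^\perp=T_1^\perp+T_2^\perp$ is then a sum of independent spaces of $M_1^*$ and $M_2^*$, which can be made direct by shrinking one summand. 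That is the one idea your proposal is missing, and your instinct that this is the delicate point is sound --- the entire weight of the argument rests on whether every superspace of $S_1\cap S_2$ really is an intersection of superspaces of $S_1$ and of $S_2$, which is a nontrivial claim in the (non-distributive) subspace lattice and deserves justification rather than being left implicit.
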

\begin{proof}
From \cite{BCR21} we know that the orthogonal complements of the spanning spaces of a $q$-matroid $M$ are the independent spaces of the dual $q$-matroid $M^*$. So we have to prove that the orthogonal complements of $\kS(M_1\wedge M_2)$ are the independent spaces of $M_1^*\vee M_2^*$. \\
First, start with $S\in\kS(M_1\wedge M_2)$. Then we can write $S=S_1\cap S_2$ for $S_1\in\kS(M_1)$ and $S_2\in\kS(M_2)$. Let $S'\supseteq S$ be a superspace of $S$. Then we can write $S'=T_1\cap T_2$ for $T_1\supseteq S_1$ and $T_2\supseteq S_2$. Note that $T_1$ and $T_2$ are also spanning spaces of $M_1$ and $M_2$, respectively. Now we take orthogonal complements.

The orthogonal complement is $S^\perp=S_1^\perp+S_2^\perp$. Now we can write $S_1^\perp=I_1^*$ with $I_1^*$ independent in $M_1^*$, and similarly, $S_2^\perp=I_2^*$ is independent in $M_2^*$. We need to prove that $I_1^*+I_2^*$ is independent in $M_1^*\vee M_2^*$, that is, we have to show that all $I'\subseteq I_1^*+I_2^*$ can be written as $J_1\oplus J_2$ with $J_1$ independent in $M_1^*$ and $J_2$ independent in $M_2^*$. Note that all $I'\subseteq I_1^*+I_2^*$ can be written as $I'=(S')^\perp$, with $S'=T_1\cap T_2$ as above. If we take orthogonal complements of $T_1$ and $T_2$, we get independent spaces of $M_1^*$ and $M_2^*$. So we can write $I'=J_1\oplus J_2$. (We can always make the sum a direct sum by taking a subspace of $J_1$ if necessary.) We conclude that $I_1^*+I_2^*$ is independent in $M_1^*\vee M_2^*$.

For the opposite inclusion, start with an independent space $I$ of $M_1^*\vee M_2^*$. Then by Theorem \ref{indipUnion} we can write $I=I_1+I_2$ with $I_1$ independent in $M_1^*$ and $I_2$ independent in $M_2^*$. Taking orthogonal complements gives that $I^\perp=I_1^\perp\cap I_2^\perp=S_1\cap S_2$ for spanning spaces $S_1$ in $M_1$ and $S_2$ in $M_2$. This implies that $I^\perp$ is in $\kS(M_1\wedge M_2)$.

Since $(M_1^*\vee M_2^*)^*$ is a $q$-matroid, this shows that $M_1\wedge M_2$ is a $q$-matroid as well.
\end{proof}

We have the following corollary on intersection, union, and restriction and contraction.

\begin{Corollary}\label{DualUnionIntersection}
Let $M_1$ and $M_2$ be $q$-matroids on the same ground space $E$. Then, for $T\subseteq E$,
\[ (M_1\vee M_2)|_T = M_1|_T\vee M_2|_T \]
and also
\[ (M_1\wedge M_2)/T \cong (M_1/T)\wedge(M_2/T). \]
\end{Corollary}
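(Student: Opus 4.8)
The plan is to prove the two identities in Corollary \ref{DualUnionIntersection} separately, handling the restriction of a union directly from the rank-function definition, and then deriving the contraction of an intersection by dualising.

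For the first identity $(M_1\vee M_2)|_T = M_1|_T\vee M_2|_T$, I would argue purely at the level of rank functions, since restriction to $T$ simply means evaluating ranks on subspaces of $T$ and leaving the rank values unchanged (Definition \ref{restr}). By Definition \ref{DefUnione}, for any $A\subseteq T$ the rank of the union $M_1\vee M_2$ is
\[ r(M_1\vee M_2;A)=\min_{X\subseteq A}\{r_1(X)+r_2(X)+\dim A-\dim X\}. \]
On the other hand, the rank functions of $M_1|_T$ and $M_2|_T$ agree with $r_1$ and $r_2$ on all subspaces of $T$, and in forming their union the minimum is again taken over $X\subseteq A\subseteq T$. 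Since every $X$ occurring in one expression occurs in the other and contributes the identical value, the two minima coincide. This gives equality of rank functions on $\kL(T)$, hence $(M_1\vee M_2)|_T = M_1|_T\vee M_2|_T$ as $q$-matroids on the ground space $T$. I expect this direction to be essentially immediate; the only point to state carefully is that the indexing set of the minimum is the same in both cases because $A$ already lies inside $T$.

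For the second identity I would leverage Theorem \ref{thm-intersection}, which expresses intersection as $M_1\wedge M_2=(M_1^*\vee M_2^*)^*$, together with Theorem \ref{DualRestrContr}, which says contraction and restriction are exchanged under duality: $(M/X)^*\cong M^*|_{X^\perp}$ and dually $M^*/X\cong M|_{X^\perp}$. The strategy is to rewrite the contraction of an intersection as a dual of a restriction of a union, apply the first identity, and then dualise back. Concretely, starting from $(M_1\wedge M_2)/T$, I would first replace $M_1\wedge M_2$ by $(M_1^*\vee M_2^*)^*$, then use the duality of restriction and contraction to convert the contraction by $T$ into a restriction by $T^\perp$ applied to the \emph{un-starred} matroid $M_1^*\vee M_2^*$. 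This turns the expression into a restriction of a union, which the first identity distributes as $(M_1^*|_{T^\perp})\vee(M_2^*|_{T^\perp})$. Finally, applying Theorem \ref{DualRestrContr} once more in each factor rewrites $M_i^*|_{T^\perp}$ as $(M_i/T)^*$, and a last application of Theorem \ref{thm-intersection} reassembles the union of duals into the intersection $(M_1/T)\wedge(M_2/T)$.

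The main obstacle will be bookkeeping the isomorphisms rather than any genuine inequality: each invocation of Theorem \ref{DualRestrContr} produces an isomorphism (hence the $\cong$ in the statement) rather than strict equality, and one must check that the chosen anti-isomorphism $\perp$ on $\kL(E)$ restricts compatibly to the relevant intervals so that the ground spaces $E/T$ and $(T^\perp)$ are identified consistently across all the steps. The cleanest route is probably to fix $\perp$ once and track, at each stage, which interval of $\kL(E)$ we are working in --- $[0,T^\perp]$ for the restrictions and $[T,1]$ for the contractions --- using the fact that $\perp$ sends one interval to the other. Provided these identifications are made coherently, the chain of rewrites closes up and yields the claimed isomorphism.
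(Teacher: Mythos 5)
Your proposal is correct and follows essentially the same route as the paper: the first identity is verified directly from the rank-function formula of Definition \ref{DefUnione} together with the fact that restriction does not alter rank values, and the second is obtained by the chain $(M_1/T)\wedge(M_2/T)=((M_1/T)^*\vee(M_2/T)^*)^*\cong((M_1^*|_{T^\perp})\vee(M_2^*|_{T^\perp}))^*=((M_1^*\vee M_2^*)|_{T^\perp})^*\cong(M_1\wedge M_2)/T$ using Theorems \ref{thm-intersection} and \ref{DualRestrContr}. Your attention to the isomorphisms versus equalities and to fixing $\perp$ consistently matches the paper's use of $\cong$ in exactly those steps.
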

\begin{proof}
The first part of the statement follows directly from Definition \ref{DefUnione} of the matroid union and the fact that for the rank function of the restriction is $r_{M|_T}(A)=r(A)$. The second statement follows from the first by applying Theorem \ref{thm-intersection}, use $(M/T)^* \cong M^*|_{T^\perp}$ from Theorem \ref{DualRestrContr}, and then applying Theorem \ref{thm-intersection} again.
\begin{align*}
(M_1/T)\wedge(M_2/T) & = ((M_1/T)^*\vee(M_2/T)^*)^* \\
 & \cong ((M_1^*|_{T^\perp})\vee(M_2^*|_{T^\perp}))^* \\
 & = ((M_1^*\vee M_2^*)|_{T^\perp})^* \\
 & \cong (M_1^*\vee M_2^*)^*/T \\
 & = (M_1\wedge M_2)/T. \qedhere
\end{align*}
\end{proof}

We finish this section with the dual of Lemma \ref{AddGreenDoesNothing}.

\begin{Lemma}\label{IntersectAllRed}
Let $M$ be a $q$-matroid. Then $M= M\wedge U_{n,n}$.
\end{Lemma}
\begin{proof}
Applying Lemma \ref{AddGreenDoesNothing} to $M^*$ gives that $M^*= M^*\vee U_{0,n}=(M\wedge U_{n,n})^*$. Dualising both sides gives the desired result.
\end{proof}

\section{The direct sum}\label{DirSum}

In this section we will define the direct sum of two $q$-matroids. The idea will be to first add loops to $M_1$ and $M_2$, so they are on the same ground space, and then taking their matroid union. In the classical case, we can also write the direct sum like this: the idea for this construction comes from \cite[Proposition 7.6.13 part 2]{brylawski_1986}.

\subsection{Defining the direct sum} \label{AdL}

The next definition explains how to ``add a loop'' to a $q$-matroid.

\begin{Definition}\label{AggiungiLoop}
Let $M=(E,r)$ be a $q$-matroid. Then the direct sum of $M$ and a loop $\ell$ is denoted by $M'=M\oplus \ell$ and constructed in the following way. Let $E'=E+\ell$. Then for every $A'\subseteq E'$ we can write $A'+\ell=A\oplus\ell$ for a unique $A\subseteq E$. Then $r'(A')=r(A)$.
\end{Definition}

\begin{Remark}\label{caseSplitting}
The definition above divides the subspaces of $E'$ into three different kinds.
\begin{itemize}
\item If $A'\subseteq E$ then $A'=A$ and $\dim A'=\dim A$.
\item If $A'\supseteq \ell$ then $A=A'\cap E$ and $\dim A=\dim A'-1$.
\item If $A'$ is not contained in $E$ and does not contain $\ell$, then $\dim A'=\dim A$. There is a diamond with bottom $A'\cap A\subseteq E$, top $A'+\ell$ and with $A$ and $A'$ in between.
\end{itemize}
\end{Remark}

This construction is well defined, in the sense that it gives a $q$-matroid, as the next theorem shows.

\begin{Theorem}\label{AddLoop}
The direct sum $M'=M\oplus\ell$ as defined above is a $q$-matroid, that is, the rank function $r'$ satisfies (r1),(r2),(r3).
\end{Theorem}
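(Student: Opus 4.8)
The plan is to verify the three rank axioms (R1)--(R3) directly for $r'$, using the definition $r'(A')=r(A)$ where $A$ is the unique subspace of $E$ with $A'+\ell=A\oplus\ell$. The main tool will be the case-splitting in Remark \ref{caseSplitting}, which tells us how $\dim A'$ relates to $\dim A$ and which diamond structure governs each subspace $A'$. I would first record the key relations: in all three cases $A=(A'+\ell)\cap E$, and $\dim A' = \dim A$ unless $\ell\subseteq A'$, in which case $\dim A'=\dim A+1$. Axiom (R1) is then immediate: $0\le r(A)=r'(A')$, and $r'(A')=r(A)\le\dim A\le\dim A'$, using (R1) for $M$ together with $\dim A\le\dim A'$.

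For (R2), I would take $A'\subseteq B'$ in $E'$ and set $A=(A'+\ell)\cap E$, $B=(B'+\ell)\cap E$. Since $A'\subseteq B'$ gives $A'+\ell\subseteq B'+\ell$, intersecting with $E$ yields $A\subseteq B$, so monotonicity of $r$ in $M$ gives $r'(A')=r(A)\le r(B)=r'(B')$. This step is routine once the correspondence $A'\mapsto A$ is seen to be order-preserving.

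The crux is (R3), the submodular inequality $r'(A'+B')+r'(A'\cap B')\le r'(A')+r'(B')$. I would translate everything into $E$ via the map $A'\mapsto A=(A'+\ell)\cap E$ and then apply submodularity of $r$ in $M$. The clean identity I expect to exploit is $(A'+B')+\ell = (A+\ell)+(B+\ell)$, so that the subspace of $E$ associated with $A'+B'$ is $(A+\ell+B+\ell)\cap E = (A+B+\ell)\cap E$. The subtlety is the meet side: the subspace associated with $A'\cap B'$ need \emph{not} be $A\cap B$, because adding $\ell$ and then intersecting does not commute with intersection. This is exactly where the loop can create a discrepancy of dimension one, and handling it is the main obstacle.

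I anticipate the argument will split according to whether $\ell$ lies in $A'$, in $B'$, in both, or in neither, using Remark \ref{caseSplitting}. In the generic case where $\ell\not\subseteq A'+B'$ one gets $r'$ agreeing with $r$ on all four terms up to the dimension bookkeeping, and (R3) for $M$ transfers directly. The delicate cases are those where $\ell$ is absorbed into the join or the meet, so that the dimension shift of $+1$ appears asymmetrically; there I would compare $(A'\cap B')+\ell$ with $(A+\ell)\cap(B+\ell)$ and show the associated $E$-subspaces still satisfy the needed inequality, possibly absorbing a slack of $1$ that the loop contributes. Concretely, I expect to show that the image subspace of $A'\cap B'$ contains (and in the problematic case equals, up to the loop) the subspace $A\cap B$, so that monotonicity plus submodularity of $r$ in $M$ closes the gap. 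Once all cases yield $r'(A'+B')+r'(A'\cap B')\le r(A)+r(B)=r'(A')+r'(B')$, the proof is complete.
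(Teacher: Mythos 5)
Your overall strategy is exactly the paper's: translate each $A'\subseteq E'$ to $A=(A'+\ell)\cap E$, check (R1) and (R2) as you do (these parts match the paper's proof verbatim in substance), and reduce (R3) to submodularity of $r$ in $M$ via the behaviour of the correspondence under join and meet, with a case split on whether $\ell$ lies in $A'$ and/or $B'$. The join identity $(A'+B')+\ell=(A+\ell)+(B+\ell)=(A+B)+\ell$ is also exactly what the paper uses. So the route is the same; the issue is in how you propose to close the meet case.

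There is one genuine error in the sketch: you say you expect to show that the image subspace of $A'\cap B'$ \emph{contains} $A\cap B$. That containment is both false in general and the wrong direction for the argument. For falsity, take $E'=\langle e_1,e_2,e_3\rangle$, $E=\langle e_1,e_2\rangle$, $\ell=\langle e_3\rangle$, $A'=\langle e_1\rangle$, $B'=\langle e_1+e_3\rangle$: then $A=B=A\cap B=\langle e_1\rangle$, but $A'\cap B'=\{0\}$, whose associated subspace of $E$ is $\{0\}\not\supseteq\langle e_1\rangle$. For the direction: to get $r'(A'+B')+r'(A'\cap B')\le r(A)+r(B)$ from $r(A+B)+r(A\cap B)\le r(A)+r(B)$ you need an \emph{upper} bound $r'(A'\cap B')\le r(A\cap B)$, hence you need the image of $A'\cap B'$ to be \emph{contained in} $A\cap B$; containment the other way plus monotonicity gives an inequality pointing the wrong way. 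The correct (and sufficient) fact is the inclusion $(A'\cap B')+\ell\subseteq(A'+\ell)\cap(B'+\ell)=(A+\ell)\cap(B+\ell)=(A\cap B)+\ell$, where the last equality uses $A,B\subseteq E$ and $\ell\cap E=\{0\}$; intersecting with $E$ then gives that the image of $A'\cap B'$ sits inside $A\cap B$, and (R2) for $M$ finishes (R3). (The paper asserts equality $(A'\cap B')+\ell=(A\cap B)+\ell$ in all three cases; in the case $\ell\not\subseteq A',B'$ only the inclusion holds, as the example above shows, but the inclusion is all that is needed.) With the containment corrected, your case analysis goes through and coincides with the paper's proof.
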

\begin{proof}
(r1) Since $r(A)\geq0$ we have $r'(A')\geq0$ as well. We get that $r'(A')=r(A)\leq\dim A\leq\dim A'$ by Remark \ref{caseSplitting}.

(r2) 
Let $A'\subseteq B'$. Since
$A=(A'+l)\cap E$,
$B=(B'+l)\cap E$ and 
$A'+l \subseteq B'+l$, we have that $A \subseteq B$. Therefore $r'(A')=r(A)\leq r(B)=r'(B')$.

For (r3) let $A',B'\subseteq E'$. We first claim that $(A'+B')+\ell=(A+B)+\ell$ and $(A'\cap B')+\ell=(A\cap B)+\ell$, because this implies that 
\begin{align*}
r'(A'+B') + r'(A'\cap B') & = r(A+B) + r(A\cap B) \\
& \leq r(A)+r(B) \\
& = r'(A')+r'(B').
\end{align*}
Now let us prove the claims. For addition, we see that
\[ (A'+B')+\ell=(A'+\ell)+(B'+\ell)=(A+\ell)+(B+\ell)=(A+B)+\ell. \]
For intersection we distinguish three cases depending on whether $A'$ and $B'$ contain $\ell$.
\begin{itemize}
\item Let $\ell\not\subseteq A',B'$. Then $(A'\cap B')+\ell=(A'+\ell)\cap(B'+\ell)=(A+\ell)\cap(B+\ell)=(A\cap B)+\ell$.
\item Let $\ell\subseteq A',B'$. Then $(A'\cap B')+\ell=A'\cap B'=(A+\ell)\cap(B+\ell)=(A\cap B)+\ell$.
\item Let $\ell\subseteq A'$ and $\ell\not\subseteq B'$. Then $(A'\cap B')+\ell=((A'\cap E)\cap B')+\ell=((A'\cap E)+\ell)\cap(B'+\ell)=(A+\ell)\cap(B+\ell)=(A\cap B)+\ell$.
\end{itemize}
The function $r'$ satisfies the axioms (r1),(r2),(r3), hence $M$ is a $q$-matroid.
\end{proof}

We combine the adding of loops and the matroid union to define the direct sum. 
\begin{Definition}\label{DirSumWithUnion}
Let $M_1=(E_1,r_1)$ and $M_2=(E_2,r_2)$ be two $q$-matroids on trivially intersecting ground spaces. Let $n_1=\dim E_1$ and $n_2=\dim E_2$. We construct the direct sum $M_1\oplus M_2$ as follows.
\begin{itemize}
\item Let $E=E_1\oplus E_2$. This will be the ground space of $M$. By slight abuse of notation, we denote by $E_i$ both the ground space of $M_i$ and the embedding of $E_i$ in $E$.
\item In the lattice $\mathcal{L}(E)$ we have that the intervals $[0,E_1]$ and $[E_2,1]$ are isomorphic to $\mathcal{L}(E_1)$, and the intervals $[0,E_2]$ and $[E_1,1]$ are isomorphic to $\mathcal{L}(E_2)$. Fix the involution $\perp$ such that $E_1^\perp=E_2$.
\item Add $n_2$ times a loop to $M_1$, using Theorem \ref{AddLoop}. This gives the $q$-matroid $M_1'$ on ground space $E$. Assume that $M_1'|_{E_1}\cong M_1$ and $M_1'|_{E_2}\cong U_{0,n_2}$.
\item Add $n_1$ times a loop to $M_2$, using again Theorem \ref{AddLoop}. This gives the $q$-matroid $M_2'$ on ground space $E$. Assume that $M_2'|_{E_1}\cong U_{0,n_1}$ and $M_2'|_{E_2}\cong M_2$.
\end{itemize}
Now the direct sum is defined as $M_1\oplus M_2=M_1'\vee M_2'$, with the matroid union as in Theorem \ref{Union_q-Matr}.
\end{Definition}

Note that this procedure is well-defined, since we already showed that adding loops and taking the matroid union are well-defined constructions. We do, however, have to show that this procedure always defines the same $q$-matroid up to isomorphism, since it was observed in Remark \ref{dependsOnCoordinates} that matroid union is not invariant under coordinatisation.

\begin{Theorem}\label{IsomorphicIsLinearAlgebra}
Let $M_1=(E_1,r_1)$ and $M_2=(E_2,r_2)$ be two $q$-matroids on trivially intersecting ground spaces and let $M=M_1\oplus M_2$ be their direct sum as constructed in Definition \ref{DirSumWithUnion}. Let $\varphi_i$ be a lattice-isomorphism of $\mathcal{L}(E_i)$ for $i=1,2$. Then there is an isomorphism $\psi$ of $\mathcal{L}(E)$ such that $\varphi_1(M_1)\oplus\varphi_2(M_2)=\psi(M)$.
\end{Theorem}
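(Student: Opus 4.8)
The plan is to show that the operations of adding loops and taking matroid union both behave well under lattice isomorphisms, so that the coordinatisation-dependence warned about in Remark \ref{dependsOnCoordinates} does not actually cause the direct sum to depend on choices. The key point is that the ``non-invariance'' in that remark came from applying an isomorphism to \emph{one} of the two $q$-matroids while leaving the other fixed; here, by contrast, we apply $\varphi_1$ to $M_1$ and $\varphi_2$ to $M_2$ \emph{simultaneously}, and we are free to choose the global isomorphism $\psi$ of $\mathcal{L}(E)$ to encode both at once.

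First I would build the candidate $\psi$. Since $E=E_1\oplus E_2$, a lattice isomorphism $\psi_0$ of $\mathcal{L}(E)$ is induced by a linear automorphism of $E$ that respects the decomposition; concretely, pick linear maps $g_1,g_2$ realising $\varphi_1,\varphi_2$ on $E_1,E_2$ and let $g=g_1\oplus g_2$ act on $E=E_1\oplus E_2$, with $\psi$ the induced lattice automorphism. By construction $\psi$ fixes $E_1$ and $E_2$ setwise, restricts to $\varphi_i$ on $\mathcal{L}(E_i)$, and (choosing $g$ to respect the fixed involution $\perp$, or adjusting $\perp$ accordingly) commutes appropriately with the duality used in Definition \ref{DirSumWithUnion}. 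Then I would verify the two compatibilities. For the adding-of-loops step, I would show that $\psi$ carries the loop-augmented matroids correctly: because $M_1'$ is characterised by $M_1'|_{E_1}\cong M_1$ and $M_1'|_{E_2}\cong U_{0,n_2}$, and $\psi$ maps the interval $[0,E_1]$ isomorphically to itself sending $M_1$ to $\varphi_1(M_1)$ while fixing the all-loops part on $E_2$, it follows that $\psi(M_1')=(\varphi_1(M_1))'$, the loop-augmentation of $\varphi_1(M_1)$; symmetrically $\psi(M_2')=(\varphi_2(M_2))'$.

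For the union step I would invoke the invariance already recorded in Remark \ref{dependsOnCoordinates}, namely that $\psi(M_1'\vee M_2')=\psi(M_1')\vee\psi(M_2')$, which is direct from the rank-function definition of matroid union since a lattice isomorphism preserves dimensions, meets, joins, and ranks simultaneously on both arguments. Chaining the pieces gives
\[
\psi(M)=\psi(M_1'\vee M_2')=\psi(M_1')\vee\psi(M_2')=(\varphi_1(M_1))'\vee(\varphi_2(M_2))'=\varphi_1(M_1)\oplus\varphi_2(M_2),
\]
where the last equality is Definition \ref{DirSumWithUnion} applied to the pair $\varphi_1(M_1),\varphi_2(M_2)$.

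The main obstacle I anticipate is the bookkeeping around the involution $\perp$ and the precise meaning of ``$M_1'|_{E_2}\cong U_{0,n_2}$'' as an identification rather than merely an abstract isomorphism. The subtlety in Remark \ref{dependsOnCoordinates} shows that the union genuinely sees the \emph{positions} of loops, not just isomorphism types, so the argument that $\psi$ sends $M_1'$ to the loop-augmentation of $\varphi_1(M_1)$ must track the actual subspaces (which loops sit where) and not only the restricted isomorphism classes. I would therefore be careful to verify that $\psi$ maps the specific loop space of $M_1'$ (all of $E_2$, plus the loops of $M_1$ pushed through $\varphi_1$) onto the specific loop space of the loop-augmentation of $\varphi_1(M_1)$; since $\psi$ fixes $E_2$ and agrees with $\varphi_1$ on $E_1$, this holds, but it is the step that genuinely uses that we transform both factors in a single coherent $\psi$ rather than transforming them independently.
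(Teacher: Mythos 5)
Your proposal follows essentially the same route as the paper: build $\psi$ as the lattice automorphism extending $\varphi_1$ and $\varphi_2$ (fixing $E_1$ and $E_2$ setwise), check that loop-augmentation commutes with $\psi$, and then use the equivariance of matroid union under a single global isomorphism. One caution: your justification that $\psi(M_1')=(\varphi_1(M_1))'$ leans on the claim that $M_1'$ is ``characterised by'' $M_1'|_{E_1}\cong M_1$ and $M_1'|_{E_2}\cong U_{0,n_2}$, which is false --- Section \ref{ExDim4} of the paper exists precisely because such restriction data does not determine a $q$-matroid; the correct verification is the explicit rank identity $r(M_1';\psi(A))=r(M_1;(\psi(A)+E_2)\cap E_1)=r(M_1;\varphi_1((A+E_2)\cap E_1))=r(\varphi_1(M_1)';A)$, which does follow from the properties of $\psi$ you list and is exactly the computation the paper carries out.
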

\begin{proof}
Let $\psi$ be an isomorphism on $\mathcal{L}(E)$ such that $\psi|_{E_1}=\varphi_1$ and $\psi|_{E_2}=\varphi_2$. We can construct $\psi$ by its images of $n_1+n_2$ linearly independent $1$-dimensional spaces: we find these by taking the image under $\varphi_1$ of $n_1$ linearly independent $1$-spaces in $E_1$ and the image under $\varphi_2$ of $n_2$ linearly independent $1$-spaces in $E_2$. \\
Let $A\subseteq E$ and let $B\subseteq E_1$ such that $A+E_2=B\oplus E_2$. This means that $B=(A+E_2)\cap E_1$. Now we have that
\begin{align*}
(\psi(A)+E_2)\cap E_1 &= (\psi(A)+\psi(E_2))\cap E_1 \\
&= \psi(A+E_2)\cap E_1 \\
&= \psi(B\oplus E_2) \cap E_1 \\
&= (\psi(B)\oplus\psi(E_2))\cap E_1 \\
&= (\varphi_1(B)\oplus E_2)\cap E_1 \\
&= \varphi_1(B) \\
&= \varphi_1((A+E_2)\cap E_1).
\end{align*}
The rank function of $\varphi_1(M_1)'$ is equal to
\begin{align*}
r(\varphi_1(M_1)';A) & = r(\varphi_1(M_1);(A+E_2)\cap E_1) \\
& = r(M_1;\varphi_1((A+E_2)\cap E_1)) \\
& = r(M_1; (\psi(A)+E_2)\cap E_1) \\
& = r(M_1';\psi(A) ).
\end{align*}
We have a similar argument for $M_2$ and $\varphi_2$. Combining these gives that
\begin{align*}
\lefteqn{r(\varphi_1(M_1)\oplus\varphi_2(M_2);A)} \\
 & = r(\varphi_1(M_1)'\vee\varphi_2(M_2)';A) \\
 &= \min_{X\subseteq E}\{ r(\varphi_1(M_1)';X)+r(\varphi_2(M_2)';X)+\dim A-\dim(A\cap X)\} \\
&= \min_{\psi(X)\subseteq E}\{ r(M_1';\psi(X))+r(M_2';\psi(X))+\dim\psi(A)-\dim(\psi(A)\cap \psi(X))\} \\
&= r(M;\psi(A)) = r(\psi(M);A).
\end{align*}
This proves the theorem.
\end{proof}

The next lemma is a direct consequence of Theorem \ref{thm:directsumrank}, but we prove it now to make the calculations in the next section easier.

\begin{Lemma}\label{l-RankOfSum}
For two $q$-matroids $M_1$ and $M_2$ it holds that 
\[r(M_1\oplus M_2)=r(M_1)+r(M_2).\]
\end{Lemma}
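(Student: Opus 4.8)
The plan is to compute the rank of the full space $E$ in the direct sum $M_1\oplus M_2 = M_1'\vee M_2'$ directly from the matroid-union formula in Definition \ref{DefUnione}, and show the minimum equals $r(M_1)+r(M_2)$. Recall that $M_1'$ is $M_1$ with $n_2$ loops added (so $M_1'|_{E_1}\cong M_1$ and $M_1'|_{E_2}\cong U_{0,n_2}$), and symmetrically for $M_2'$. In particular $r(M_1';E)=r(M_1';E_1)=r(M_1)$ since adding loops does not change the total rank, and likewise $r(M_2';E)=r(M_2)$. The matroid-union rank of $E$ is
\[ r(M_1\oplus M_2; E) = \min_{X\subseteq E}\{r(M_1';X)+r(M_2';X)+\dim E-\dim X\}. \]

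First I would establish the upper bound by exhibiting a good choice of $X$. Taking $X=E$ gives $r(M_1';E)+r(M_2';E)+\dim E-\dim E = r(M_1)+r(M_2)$, so the minimum is at most $r(M_1)+r(M_2)$. The substance of the proof is the reverse inequality: showing that for every $X\subseteq E$ we have $r(M_1';X)+r(M_2';X)+\dim E-\dim X \geq r(M_1)+r(M_2)$.

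For the lower bound I would use monotonicity together with the bounds $r(M_i';X)\leq r(M_i')=r(M_i)$ cleverly, but the cleaner route is to invoke the submodular inequality set up in the proof of Theorem \ref{Union_q-Matr}. There the function $f(A)=r(M_1';A)+r(M_2';A)$ is shown to be increasing and submodular with $f(0)=0$, and the union rank is exactly the associated $q$-matroid rank from Theorem \ref{thm-FunctionToMatroid}. Applying property (r1) to this rank function, $r(M_1\oplus M_2;E)\leq \dim E$ is automatic but not what I want; instead I would apply (r2) to the independence-style bound. The key estimate is that for any $X$, $r(M_1';X)\geq r(M_1')-\operatorname{codim}_E(X)$-type reasoning does not hold in general, so I expect the honest argument to run through the minorings: by construction $r(M_1';X)+\dim E-\dim X\geq r(M_1';E)=r(M_1)$, which follows because $\dim E-\dim X\geq r(M_1';E)-r(M_1';X)$ is precisely the statement that the rank increment from $X$ to $E$ is bounded by the dimension increment, i.e. (r1)–(r2) applied to the minor. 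Adding the analogous inequality $r(M_2';X)+\dim E-\dim X\geq r(M_2)$ would overcount the $\dim E-\dim X$ term, so I cannot simply add them; this is the main obstacle.

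To resolve the overcounting, the right move is to note $r(M_2';X)\geq 0$ and combine it with the single inequality $r(M_1';X)+\dim E-\dim X\geq r(M_1)$, giving a lower bound of only $r(M_1)$, which is too weak. The correct strategy, and the hard part of the proof, is therefore to use the specific loop structure: $M_2'$ has $E_1$ as a loop space, so $r(M_2';X)=r(M_2';X\cap \text{(non-loop part)})$, and dually $M_1'$ has $E_2$ as loops. I would split $X$ relative to $E_1$ and $E_2$ and exploit that the non-loop directions of $M_1'$ and $M_2'$ are complementary, so that the two rank deficiencies $r(M_i')-r(M_i';X)$ cannot both be large simultaneously; quantifying this via semimodularity applied to $X$, $E_1$, and $E_2$ yields the sharp bound $r(M_1';X)+r(M_2';X)\geq r(M_1)+r(M_2)-(\dim E-\dim X)$, completing the reverse inequality and hence the equality.
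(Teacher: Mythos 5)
Your overall strategy coincides with the paper's: write $r(M_1\oplus M_2)=r(M_1'\vee M_2';E)$ via the union formula, get the upper bound from the choice $X=E$, and for the lower bound exploit that the loop spaces of $M_1'$ and $M_2'$ are complementary. However, there is a genuine gap: the entire content of the lemma sits in the inequality $r(M_1';X)+r(M_2';X)\geq r(M_1)+r(M_2)-(\dim E-\dim X)$, and you only assert that ``quantifying this via semimodularity applied to $X$, $E_1$, and $E_2$'' yields it, without deriving it. Moreover, the one concrete intermediate claim you do make, namely $r(M_2';X)=r(M_2';X\cap(\text{non-loop part}))$, is false as literally stated: by Definition \ref{AggiungiLoop} the rank of $X$ in $M_2'$ is the rank in $M_2$ of the \emph{projection} $Y_2$ of $X$ onto $E_2$ along $E_1$ (the space with $X+E_1=Y_2\oplus E_1$), not of the intersection $X\cap E_2$. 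For instance, a one-dimensional $X$ meeting both $E_1$ and $E_2$ trivially has $X\cap E_2=0$ but in general has rank $1$ in $M_2'$.

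To close the gap along the lines you indicate (which is exactly what the paper does), set $Y_i\subseteq E_i$ with $X+E_{3-i}=Y_i\oplus E_{3-i}$, so that $r(M_i';X)=r(M_i;Y_i)$ and $\dim Y_i=\dim X-\dim(X\cap E_{3-i})$. The estimate needed is not semimodularity among $X$, $E_1$, $E_2$ inside the union, but the unit-increase bound (what the paper calls local semimodularity) inside each summand, $r(M_i;E_i)-r(M_i;Y_i)\leq\dim E_i-\dim Y_i$, combined with the dimension count $\dim Y_1+\dim Y_2=2\dim X-\dim(X\cap E_1)-\dim(X\cap E_2)\geq\dim X$, which holds because $X\cap E_1$ and $X\cap E_2$ intersect trivially and both lie in $X$. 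Adding the two unit-increase bounds gives $r(M_1;Y_1)+r(M_2;Y_2)\geq r(M_1)+r(M_2)-\dim E+\dim Y_1+\dim Y_2\geq r(M_1)+r(M_2)-\dim E+\dim X$, which is precisely your claimed sharp bound. With these steps supplied, your argument is complete and is essentially the paper's proof.
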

\begin{proof}
By applying Definitions \ref{AggiungiLoop} and \ref{DirSumWithUnion}, we get that
\[ r(M_1\oplus M_2)=r(M_1'\vee M_2';E)=\min_{X\subseteq E}\{ r(M_1';X)+r(M_2';X)+\dim E-\dim X\}. \]
If we take $X=E$, we get that
\[ r(M_1';X)+r(M_2';X)+\dim E-\dim X = r(M_1';E)+r(M_2';E)=r(M_1)+r(M_2). \]
Now let $Y_1\subseteq E_1$ such that $X+E_2=Y_1\oplus E_2$. Then $r(M_1';X)=r(M_1;Y_1)$. Similarly, let $Y_2\subseteq E_2$ such that $X+E_1=Y_2\oplus E_1$ so $r(M_2';X)=r(M_2;Y_2)$. We have that $\dim(Y_1)=\dim(X)-\dim(X\cap E_2)$ and $\dim(Y_2)=\dim(X)-\dim(X\cap E_1)$. Note that, by local semimodularity (Lemma \ref{localSemimodularity}, $r(M_1;Y_1)\geq r(M_1;E_1)-\dim(E_1)+\dim(Y_1)$ and similarly $r(M_2;Y_2)\geq r(M_2;E_2)-\dim(E_2)+\dim(Y_2)$. All together this gives
\begin{align*}
\lefteqn{r(M_1';X)+r(M_2';X)+\dim E-\dim X} \\
 &= r(M_1;Y_1)+r(M_2;Y_2)+\dim E-\dim X \\
 &\geq r(M_1;E_1)-\dim(E_1)+\dim(Y_1) \\
 & \quad +r(M_2;E_2)-\dim(E_2)+\dim(Y_2)+\dim E-\dim X \\
 &= r(M_1)+r(M_2)-\dim(X)+\dim(Y_1)+\dim(Y_2) \\
 &= r(M_1)+r(M_2)-\dim(X)+\dim(X)-\dim(X\cap E_2)+\dim(X)-\dim(X\cap E_1) \\
 &= r(M_1)+r(M_2)+\dim(X)-\dim(X\cap E_2)-\dim(X\cap E_1) \\
 &\geq r(M_1)+r(M_2).
\end{align*}
This means that the minimum $\min_{X\subseteq E}\{ r(M_1';X)+r(M_2';X)+\dim E-\dim X\}$ is attained by $X=E$ and $r(M_1\oplus M_2)=r(M_1)+r(M_2)$.
\end{proof}

\subsection{Examples of the direct sum}

To get some feeling for this construction, we analyse some small examples. We refer to the Appendix for an overview of small $q$-matroids. \\
We start with the easiest examples possible, with $n_1=n_2=1$.

\begin{Example}
Let $M_1=M_2=U_{0,1}$. This is the sum of two loops. In fact, we could just use Theorem \ref{AddLoop} here, without Definition \ref{DirSumWithUnion}, but we do the whole procedure for clarity. For $M_1'=M_1\oplus\ell$, let $E_1=\langle(1,0)\rangle$. Then by Theorem \ref{AddLoop}, $M_1'$ is a $q$-matroid of rank $0$, so all its subspaces have rank zero. In fact, $M_1'\cong U_{0,2}$. Let $E_2=\langle(0,1)\rangle$. We also have that $M_2'\cong U_{0,2}$. Applying Theorem \ref{Union_q-Matr} we find that $M_1'\vee M_2'\cong U_{0,2}$. \\
Let $M_1=U_{0,1}$ and $M_2=U_{1,1}$. Then $M_1'=U_{0,2}$ as argued above. For $M_2'$, let $E_2=\langle(0,1)\rangle$ and apply Theorem \ref{AddLoop}. By construction, $r(\{0\})=0$. In dimension $1$ we have $r(\langle (0,1)\rangle)=r(E_2)=r_2(E_2)=1$, $r(\ell)=r_2(\{0\})=0$, and for all other spaces $A$ of dimension $1$ we have $r(A)=r_2(E_2)=1$. These are the three cases in Remark \ref{caseSplitting}. Note that $M_2'$ is a mixed diamond (see Section \ref{qcd2}). Finally, we have $r(E)=r_2(E_2)=1$. By Lemma \ref{AddGreenDoesNothing}, $M_1 \oplus M_2=M_1' \vee M_2' =M_2'$. \\
The last case to consider is $M_1=M_2=U_{1,1}$. We have seen that $M_1'$ and $M_2'$ are a mixed diamond. To get $M_1'\vee M_2'$, we first see that $r(\{0\})=0$. In dimension $1$, we have that $r(\langle(0,1)\rangle)=\min\{ 0+0+1-0, 1+0+1-1 \}=1$. For $r(\langle(1,0)\rangle)$ we get the same but in a different order, so the rank is again $1$. For a $1$-dimensional space not equal to $E_1$ or $E_2$ we get $r(A)=\min\{0+0+1-0, 1+1+1-1 \}=1$. Finally, for $E$ we get $r(E)=\min\{0+0+2-0,1+0+2-1,0+1+2-1,1+1+2-1,1+1+2-0\}=2$. So, $M_1\oplus M_2=U_{2,2}$.
\end{Example}

Note that it follows from this example that $U_{1,2}$ is connected: it cannot be written as a direct sum of two $q$-matroids of dimension $1$.

\begin{Example}\label{ex-PrimePlusRed}
We calculate the $q$-matroid $P_1^*$ (see Section \ref{P1star}), it is the sum of a prime diamond (see Section \ref{qcd2}) and an independent 1-dimensional space, that is, $M_1=U_{1,2}$ and $M_2=U_{1,1}$. Let $E_1=\langle (0,0,1),(0,1,0)\rangle$ and $E_2=\langle (1,0,0)\rangle$. We first have to make $M_1'$ and $M_2'$. \\
For $M_1'$ we take $\ell=E_2=\langle(1,0,0)\rangle$. We have that $r_1'(0)=0$ and $r_1'(E)=r(M_1)=1$. For a $1$-dimensional space inside $E_1$, the rank is $1$, while $r_1'(\ell)=0$. For any other $1$-dimensional space $A$, $r_1'(A)=r_1(A')$ for $A'\subseteq E_1$, so $r_1'(A)=1$. For the $2$-dimensional spaces $A$, $r_1'(E_1)=1$. If $\ell\subseteq A$, $r_1'(A)=r_1(A\cap E_1)=1$. For the other $2$-dimensional spaces we have $r_1'(A)=r_1(E_1)=1$. Together, we find that $M_1'$ is the $q$-matroid $P_1$ in the Section \ref{P1}. \\
For $M_2'$ we have to add a loop twice to $U_{1,1}$. The first loop gives the mixed diamond, as explained in the previous example. The second one gives a $q$-matroid isomorphic to $P_2$ (see Section \ref{P2}). \\
Now we take the union. We have $r(0)=0$ and also $r(E)=2$ by Lemma \ref{l-RankOfSum}. \\ There are three types of $1$-dimensional spaces, as well as three types of $2$-dimensional spaces. Let $\dim A=1$. If $A\subseteq E_1$ then $r(A)=\min\{0+0+1-0, 1+0+1-1\}=1$. \\If $A=E_2$ then $r(A)=\min\{0+0+1-0,0+1+1-1\}=1$. For the other $1$-dimensional spaces $A$, $r(A)=\min\{0+0+1-0,1+1+1-1\}=1$. Now let $\dim A=2$. If $A=E_1$ then $r(A)=\min\{0+0+2-0,1+0+2-1,1+0+2-2\}=1$. For the other 2-dimensional spaces $A$, note that any $1$-dimensional space has rank $1$ in either $M_1'$ or in $M_2'$, contributing $1+0+2-1=0+1+2-1=2$ to the minimum. The zero space also contributes $0+0+2-0=2$, and the space itself gives $1+1+2-2=2$. So $r(A)=2$. \\
In total, we see that $U_{1,2}\oplus U_{1,1}\cong P_1^*$.
\end{Example}

\subsection{Properties of the direct sum}

We will now show that the direct sum as defined here has some desirable properties. All of these results are also true for the classical case, motivating the `correctness' of the definition of the direct sum presented in the previous section. Further support of the definition is provided by \cite{GJ}, where it is shown that the direct sum is the coproduct in the category of $q$-matroids and linear weak maps.

\begin{Theorem}\label{thm:directsumrank}
Let $M_1$ and $M_2$ be two $q$-matroids with ground spaces $E_1$ and $E_2$, respectively. Let their direct sum be as defined in Definition \ref{DirSumWithUnion}. Then for any $A\subseteq E$ of the form $A=A_1\oplus A_2$ with $A_1\subseteq E_1$ and $A_2\subseteq E_2$ it holds that $r(M_1\oplus M_2;A)=r(M_1;A_1)+r(M_2;A_2)$.
\end{Theorem}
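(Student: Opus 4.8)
The plan is to unwind the construction of Definition~\ref{DirSumWithUnion} and reduce the statement to two matching inequalities, using exactly the bookkeeping already developed in the proof of Lemma~\ref{l-RankOfSum}. Writing $M = M_1\oplus M_2 = M_1'\vee M_2'$, Definition~\ref{DefUnione} together with Remark~\ref{r-incl-only} gives
\[ r(M;A) = \min_{X\subseteq A}\{\, r(M_1';X) + r(M_2';X) + \dim A - \dim X \,\}. \]
The first thing I would isolate are the translations of the loop-added rank functions: exactly as in Lemma~\ref{l-RankOfSum}, for every $X\subseteq E$ one has $r(M_1';X) = r(M_1;(X+E_2)\cap E_1)$ and $r(M_2';X) = r(M_2;(X+E_1)\cap E_2)$, since adding $n_2$ (resp.\ $n_1$) loops means the old ground space is $E_1$ (resp.\ $E_2$).

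For the upper bound I would simply take $X=A$ in the minimum. Since $A=A_1\oplus A_2$ with $A_2\subseteq E_2$, we have $A+E_2 = A_1+E_2$, and modularity of $\kL(E)$ together with $E_1\cap E_2=0$ gives $(A+E_2)\cap E_1 = (A_1+E_2)\cap E_1 = A_1$. Hence $r(M_1';A)=r(M_1;A_1)$ and symmetrically $r(M_2';A)=r(M_2;A_2)$, so that $r(M;A)\leq r(M_1;A_1)+r(M_2;A_2)$.

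For the lower bound I would show that \emph{every} term of the minimum is at least $r(M_1;A_1)+r(M_2;A_2)$. Fix $X\subseteq A$ and set $Y_1=(X+E_2)\cap E_1$ and $Y_2=(X+E_1)\cap E_2$; then $Y_1\subseteq A_1$ and $Y_2\subseteq A_2$ by the same modular computation as above, and the two translation identities say the term equals $r(M_1;Y_1)+r(M_2;Y_2)+\dim A-\dim X$. Local semimodularity (the unit-increase consequence of (R1)--(R3) invoked in Lemma~\ref{l-RankOfSum}) gives $r(M_i;A_i)\leq r(M_i;Y_i)+\dim A_i-\dim Y_i$ for $i=1,2$; summing and using $\dim A=\dim A_1+\dim A_2$ reduces the desired inequality to the purely dimensional claim $\dim Y_1+\dim Y_2\geq\dim X$. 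A modular-law count gives $\dim Y_1=\dim X-\dim(X\cap E_2)$ and $\dim Y_2=\dim X-\dim(X\cap E_1)$, so the claim is equivalent to $\dim(X\cap E_1)+\dim(X\cap E_2)\leq\dim X$, which holds because $(X\cap E_1)\cap(X\cap E_2)=X\cap E_1\cap E_2=0$ forces $(X\cap E_1)\oplus(X\cap E_2)\subseteq X$.

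The main obstacle is precisely this last dimensional step: one must recognise that the two pieces $Y_1,Y_2$ extracted from $X$ can together lose dimension relative to $X$ (they overlap through the ``diagonal'' of $E_1\oplus E_2$), and that the slack $\dim X-\dim Y_1-\dim Y_2\geq 0$ is exactly the amount needed to absorb the $-\dim X$ against the gain from the unit-increase bound. Everything else is a routine application of the translation identities, modularity, and local semimodularity, so once the dimensional inequality is in place the two bounds coincide and yield the equality $r(M_1\oplus M_2;A)=r(M_1;A_1)+r(M_2;A_2)$.
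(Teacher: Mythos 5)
Your proof is correct and follows essentially the same route as the paper's: take $X=A$ in the minimum for the upper bound, and for the lower bound translate $r(M_i';X)$ into $r(M_i;Y_i)$ with $Y_i\subseteq A_i$, apply local semimodularity, and close with the dimension count $\dim Y_1+\dim Y_2\geq\dim X$. Incidentally, you state that last inequality in the correct direction and actually justify it via $(X\cap E_1)\oplus(X\cap E_2)\subseteq X$, whereas the paper's proof asserts ``$\dim B_1+\dim B_2\leq\dim X$'' --- a sign typo, since its own final display requires $\geq$.
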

\begin{proof}
By definition of the direct sum we have that
\[ r(M_1\oplus M_2;A)=\min_{X\subseteq A}\{r(M_1';X)+r(M_2';X)+\dim A-\dim X\}. \]
We will show that the minimum is attained for $X=A$. First, note that $A+E_2=A_1\oplus E_2$ and $A+E_1=A_2\oplus E_1$. Then taking $X=A$ inside the minimum gives
\[ r(M_1';A)+r(M_2';A)+\dim A-\dim A=r(M_1;A_1)+r(M_2;A_2). \]
We have left to show that for any $X\subseteq A$, the quantity inside the minimum is at least $r(M_1;A_1)+r(M_2;A_2)$. To see this, take $B_1\subseteq E_1$ and $B_2\subseteq E_2$ such that $X+E_2=B_1\oplus E_2$ and $X+E_1=B_2\oplus E_1$. \\
For the dimension of $B_1$, we have that $\dim B_1=\dim(X+E_2)-\dim E_2=\dim X-\dim(X\cap E_2)$. Furthermore, $B_1\subseteq A_1$ and thus by local semimodularity (Lemma \ref{localSemimodularity}), $r(M_1;A_1)-\dim A_1\leq r(M_1;B_1)-\dim B_1$. Similar results hold for $B_2$. Finally, note that $\dim B_1+\dim B_2\leq\dim X$. \\
Combining this, we get that
\begin{align*}
\lefteqn{r(M_1';X)+r(M_2';X)+\dim A-\dim X} \\
&= r(M_1;B_1)+r(M_2;B_2)+\dim A-\dim X \\
 &\geq r(M_1;A_1)-\dim A_1+\dim B_1+r(M_2;A_2)-\dim A_2+\dim B_2+\dim A-\dim X \\
&\geq r(M_1;A_1)+r(M_2;A_2).
\end{align*}
This completes the proof that $r(M_1\oplus M_2;A)=r(M_1;A_1)+r(M_2;A_2)$.
\end{proof}

From Theorem \ref{RangoSomma} the following is now immediate.

\begin{Corollary}\label{MinorTheorem}
Let $M_1$ and $M_2$ be two $q$-matroids with ground spaces $E_1$ and $E_2$, respectively. Then their direct sum, as defined in Definition \ref{DirSumWithUnion}, satisfies the properties of Definition \ref{def-directsum1}.
\end{Corollary}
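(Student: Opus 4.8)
The plan is to derive Corollary \ref{MinorTheorem} as a direct consequence of the immediately preceding theorem together with Theorem \ref{RangoSomma}. Recall that Theorem \ref{RangoSomma} established an equivalence: a $q$-matroid $M$ on $E=E_1\oplus E_2$ satisfies the two minor conditions of Definition \ref{def-directsum1} if and only if its rank function satisfies $r(A+B)=r_1(A)+r_2(B)$ for every $A\subseteq E_1$ and $B\subseteq E_2$. So to prove Corollary \ref{MinorTheorem} it suffices to verify that the direct sum $M_1\oplus M_2$ from Definition \ref{DirSumWithUnion} satisfies this additive rank formula.

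The key observation is that the preceding theorem does exactly this. It asserts that for any $A\subseteq E$ of the form $A=A_1\oplus A_2$ with $A_1\subseteq E_1$ and $A_2\subseteq E_2$, we have $r(M_1\oplus M_2;A)=r(M_1;A_1)+r(M_2;A_2)$. Writing $A=A_1+A_2$ (which equals $A_1\oplus A_2$ since $E_1\cap E_2=\mathbf{0}$ forces $A_1\cap A_2=\mathbf{0}$), this is precisely the hypothesis $r(A_1+A_2)=r_1(A_1)+r_2(A_2)$ required by the second condition of Theorem \ref{RangoSomma}, with the roles of $A$ and $B$ there played by $A_1$ and $A_2$.

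The proof I would write is therefore a single short deduction. First I would note that $M_1\oplus M_2$ is a $q$-matroid on $E=E_1\oplus E_2$, as established in Definition \ref{DirSumWithUnion} and the well-definedness results preceding it. Then I would invoke the preceding theorem to conclude that its rank function satisfies $r(M_1\oplus M_2;A_1+A_2)=r(M_1;A_1)+r(M_2;A_2)$ for all $A_1\subseteq E_1$ and $A_2\subseteq E_2$. Finally, I would apply the ``if'' direction of Theorem \ref{RangoSomma} to this rank identity, which yields immediately that $M_1\oplus M_2$ satisfies both minor conditions of Definition \ref{def-directsum1}, namely that $M|_{E_1}$ and $M/E_2$ are isomorphic to $M_1$, and $M|_{E_2}$ and $M/E_1$ are isomorphic to $M_2$.

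There is essentially no obstacle here, which is why the authors flag the result as ``immediate.'' The only point requiring the slightest care is to match the notation: Theorem \ref{RangoSomma} phrases the additive formula as $r(A+B)=r_1(A)+r_2(B)$ with $A\subseteq E_1$ and $B\subseteq E_2$, whereas the preceding theorem uses $A=A_1\oplus A_2$; one must simply identify $A_1$ with the ``$A$'' and $A_2$ with the ``$B$'' of Theorem \ref{RangoSomma} and recall that the sum is automatically direct by the triviality of $E_1\cap E_2$. All the genuine work has already been done in establishing the additive rank formula in the preceding theorem, so this corollary is purely a bookkeeping step chaining two earlier results.
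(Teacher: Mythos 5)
Your proposal is correct and matches the paper exactly: the paper states the corollary is immediate from Theorem \ref{RangoSomma}, meaning precisely the chain you describe — the preceding theorem supplies the additive rank formula $r(A_1+A_2)=r_1(A_1)+r_2(A_2)$, and the ``if'' direction of Theorem \ref{RangoSomma} converts that into the two minor conditions of Definition \ref{def-directsum1}. No gaps; your notational bookkeeping is the only content the paper leaves implicit.
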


Note that this implies that also the rest of the results in Section \ref{FirstDef} hold for our Definition \ref{DirSumWithUnion} of the direct sum. Another desirable property of our definition of the direct sum is that the dual of the direct sum is the direct sum of the duals. \\

In order to prove that direct sum commutes with duality, we need to define duality on $E_1$, $E_2$, and $E$ in a compatible way.

\begin{Definition}
Let $E=E_1\oplus E_2$ and let $\perp$ be an anti-isomorphism on $\mathcal{L}(E)$ such that $E_1^\perp=E_2$. Define an anti-isomorphism $\perp\!\!(E_1)$ on $E_1$ by
\[ A^{\perp(E_1)}:=(A+E_2)^\perp=A^\perp\cap E_2^\perp=A^\perp\cap E_1. \]
Similarly, we define the anti-isomorphism $A^{\perp(E_2)}=A^\perp\cap E_2$.
\end{Definition}

The map $\perp\!\!(E_1)$ (and, similarly, $\perp\!\!(E_2)$) is indeed an anti-isomorphism, because it is the concatenation of the isomorphism $[0,E_1]\to[E_2,E]$ given by $A\mapsto A\oplus E_2$ and the anti-isomorphism $\perp$ restricted to $[E_2,E]\to[0,E_1]$. 

\begin{Theorem}\label{thm-DualDirect}
Let $M_1$ and $M_2$ be $q$-matroids on ground spaces $E_1$ and $E_2$, respectively. Then we have that $(M_1\oplus M_2)^*=M_1^*\oplus M_2^*$.
\end{Theorem}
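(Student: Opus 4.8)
The plan is to reduce the claim to the union/intersection duality of Theorem \ref{thm-intersection} and to identify, by a direct computation, the dual of the loop-adding construction. Writing the direct sum as $M_1\oplus M_2=M_1'\vee M_2'$ (Definition \ref{DirSumWithUnion}), Theorem \ref{thm-intersection} gives at once
\[ (M_1\oplus M_2)^*=(M_1'\vee M_2')^*=(M_1')^*\wedge(M_2')^*. \]
On the other side, by definition $M_1^*\oplus M_2^*=(M_1^*)'\vee(M_2^*)'$. So the whole theorem amounts to the single identity $(M_1')^*\wedge(M_2')^*=(M_1^*)'\vee(M_2^*)'$; that is, the direct sum, which we built as a union of loop-extensions, admits a dual description as an intersection of coloop-extensions of the duals.

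First I would pin down $(M_1')^*$ (and symmetrically $(M_2')^*$). By Definition \ref{AggiungiLoop} we have $r(M_1';A)=r_1\big((A+E_2)\cap E_1\big)$ and $r(M_1')=r(M_1)=:k_1$, so Definition \ref{defdual} gives
\[ r\big((M_1')^*;A\big)=\dim A-k_1+r_1\big((A^\perp+E_2)\cap E_1\big). \]
For a decomposable space $A=A_1\oplus A_2$ the subspace lattice is modular, and since $E_2\subseteq A_1^\perp$ and $A_2^\perp+E_2=E$ one computes $A^\perp+E_2=A_1^\perp$, hence $(A^\perp+E_2)\cap E_1=A_1^\perp\cap E_1=A_1^{\perp(E_1)}$. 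This yields
\[ r\big((M_1')^*;A_1\oplus A_2\big)=r_1^*(A_1)+\dim A_2=r(M_1^*;A_1)+\dim A_2, \]
with $\perp(E_1)$ the compatible anti-isomorphism fixed before the theorem. In other words $(M_1')^*$ is exactly $M_1^*$ with all of $E_2$ turned into coloops (it restricts to $M_1^*$ on $E_1$ and to $U_{n_2,n_2}$ on $E_2$), which is the dual statement to ``adding a loop''. This part is routine once the modular identities are in place.

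The core of the proof is then the identity $(M_1')^*\wedge(M_2')^*=(M_1^*)'\vee(M_2^*)'$, which I would prove by comparing rank functions. For the right-hand side, Definition \ref{DefUnione} with Remark \ref{r-incl-only} expresses $r\big((M_1^*)'\vee(M_2^*)';A\big)$ as a minimum over all $X\subseteq E$ of $r_1^*\big((X+E_2)\cap E_1\big)+r_2^*\big((X+E_1)\cap E_2\big)+\dim A-\dim(A\cap X)$, where $r_i^*(Z)=\dim Z-k_i+r_i(Z^{\perp(E_i)})$. For the left-hand side I would use $(M_1')^*\wedge(M_2')^*=(M_1'\vee M_2')^*$ and Definition \ref{defdual}, so that its rank at $A$ equals $\dim A-r(M_1\oplus M_2)+r(M_1'\vee M_2';A^\perp)$; here $r(M_1\oplus M_2)=k_1+k_2$ by Lemma \ref{l-RankOfSum}, and $r(M_1'\vee M_2';A^\perp)$ is again a minimum over all $X\subseteq E$. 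I would then match the two minimizations through the involution $X\mapsto X^\perp$, using modularity of the subspace lattice and the compatibility of $\perp$ with $\perp(E_1)$ and $\perp(E_2)$ to convert the terms $r_1\big((X+E_2)\cap E_1\big)$ occurring for $M_1'\vee M_2'$ into the dual ranks $r_1^*$ occurring for $(M_1^*)'$, and likewise for the index $2$; the identity $\dim(A^\perp\cap X^\perp)=\dim E-\dim(A+X)$ converts the dimension bookkeeping for $A^\perp$ into that for $A$.

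The main obstacle is precisely this last matching: after the substitution the two minimands do not agree term by term — monotonicity of the $r_i$ makes $r_1\big((X^\perp+E_2)\cap E_1\big)\geq r_1(X^\perp\cap E_1)$ while the accompanying dimension terms move the other way — so one cannot simply read off equality of the summands. I would close this gap by showing that the two minima coincide even though the summands do not: for an $X$ attaining one minimum, a suitable decomposable competitor built from the intersections of $X$ (or $X^\perp$) with $E_1$ and $E_2$ is feasible for the other minimum with value no larger, and conversely, which forces equality. An alternative route that sidesteps the computation would be to verify directly that the spanning spaces of $(M_1')^*\wedge(M_2')^*$, described as intersections $S_1\cap S_2$, are the orthogonal complements of the independent spaces of $(M_1^*)'\vee(M_2^*)'$ as described by Theorem \ref{indipUnion}; I would fall back on this cryptomorphic comparison if the rank bookkeeping becomes unwieldy.
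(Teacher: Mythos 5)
Your reductions are sound: $(M_1\oplus M_2)^*=(M_1'\vee M_2')^*=(M_1')^*\wedge(M_2')^*$ follows from Theorem \ref{thm-intersection}, and your computation of $(M_1')^*$ on decomposable spaces is correct (in fact one can show $r((M_1')^*;A)=r_1^*(A\cap E_1)+\dim A-\dim(A\cap E_1)$ for \emph{all} $A$, which you would need, since your formula as stated only covers $A=A_1\oplus A_2$). But the proposal has a genuine gap at exactly the point where the content of the theorem lives: the identity $(M_1')^*\wedge(M_2')^*=(M_1^*)'\vee(M_2^*)'$ is never proved. You correctly observe that after substituting $X\mapsto X^\perp$ the two minimands do not agree term by term, and then state that you would ``close this gap'' by exhibiting, for each minimizer of one side, a feasible competitor for the other with value no larger ``and conversely.'' That two-sided feasibility argument is the entire theorem; describing it as a plan is not a proof, and it is not obvious how to build the required competitor (a decomposable space formed from $X\cap E_1$ and $X\cap E_2$ changes both the $r_i$-terms and the dimension terms in opposite directions, which is precisely the obstruction you name). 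The fallback via spanning spaces and Theorem \ref{indipUnion} is likewise only gestured at.

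It is worth noting how the paper's proof sidesteps this difficulty: rather than proving equality of two rank functions (equivalently, equality of two minima for every $A$), it compares only \emph{bases}. Taking a basis $B$ of $M_1\oplus M_2$, Lemma \ref{l-RankOfSum} gives $r(M_1^*\oplus M_2^*)=\dim B^\perp$, so it suffices to show $B^\perp$ is independent in $M_1^*\oplus M_2^*$, i.e.\ to verify the single one-sided inequality
\[
\dim(B^\perp\cap X)\;\leq\; r((M_1^*)';X)+r((M_2^*)';X)
\qquad\text{for all }X\subseteq E,
\]
which follows from the rewriting $r((M_1^*)';X)=r(M_1;X^\perp\cap E_1)+\dim E-\dim(X^\perp\cap E_1)-r(M_1;E_1)$ and elementary estimates. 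Replacing your equality-of-minima target with this inequality-for-bases target is the key idea you are missing; with it, the awkward term-by-term matching disappears. If you want to salvage your route, you should either carry out the competitor construction explicitly in both directions, or switch to comparing bases (or independent spaces) as the paper does.
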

\begin{proof}
Let $B$ be a basis of $M_1\oplus M_2$. We will prove that $B^\perp$ is a basis of $M_1^*\oplus M_2^*$. First, note that by Lemma \ref{l-RankOfSum} we have
\begin{align*}
r(M_1^*\oplus M_2^*) &= r(M_1^*)+r(M_2^*) \\
&= \dim E_1-r(M_1)+\dim E_2-r(M_2) \\
&= \dim E-r(M_1\oplus M_2) \\
&= \dim B^\perp.
\end{align*}
This means that if we show that $B^\perp$ is independent in $M_1^*\oplus M_2^*$, it is also a basis. The rank of $B^\perp$ in $M_1^*\oplus M_2^*$ is given by
\[ r(M_1^*\oplus M_2^*, B^\perp)=\min_{X\subseteq B^\perp}\{r((M_1^*)';X)+r((M_2^*)';X)+\dim B^\perp-\dim X\}. \]
We want this to be equal to $\dim B^\perp$, hence we need to show for all $X\subseteq B^\perp$ that
\[ r((M_1^*)';X)+r((M_2^*)';X) \geq \dim X. \]
This bound is tight: take $X=\{0\}$ for example. In order to rewrite the left hand side of this inequality, note that 
\begin{align*}
((X+E_2)\cap E_1)^{\perp(E_1)} & = ((X+E_2)\cap E_1)^\perp\cap E_1 \\
 &= ((X+E_2)^\perp + E_2)\cap E_1 \\
 &= ((X^\perp\cap E_1)+E_2)\cap E_1 \\
 &= X^\perp \cap E_1
\end{align*}
because for a space in $E_1$, first adding $E_2$ and then intersecting with $E_1$ is giving the same space we start with. With this in mind, we can rewrite one of the rank functions:
\begin{align*}
r((M_1^*)';X) &= r(M_1^*;(X+E_2)\cap E_1) \\
 &= r(M_1;((X+E_2)\cap E_1)^{\perp(E_1)}) +\dim((X+E_2)\cap E_1)-r(M_1;E_1) \\
 &= r(M_1;X^\perp \cap E_1) +\dim E_1-\dim(X^\perp\cap E_1)-r(M_1;E_1).
\end{align*}
We have a similar result for $r((M_2^*)';X)$. Applying this yields
\begin{align*}
\lefteqn{r((M_1^*)';X)+r((M_2^*)';X)} \\
 &= r(M_1;X^\perp \cap E_1) +\dim E_1 -\dim(X^\perp\cap E_1)-r(M_1;E_1) \\
 & \quad + r(M_2;X^\perp \cap E_2) +\dim E_2-\dim(X^\perp\cap E_2)-r(M_2;E_2) \\
 &= \dim X +\dim X^\perp -\dim B + r(M_1;X^\perp \cap E_1) -\dim(X^\perp\cap E_1) \\
 & \quad + r(M_2;X^\perp \cap E_2) -\dim(X^\perp\cap E_2).
\end{align*}
In order for this quantity to be greater than or equal to $\dim X$, we need to prove for all $X\subseteq B^\perp$ the following inequality:
\[ r(M_1;X^\perp\cap E_1) + r(M_2;X^\perp\cap E_2) + \dim X^\perp \geq \dim B+\dim(X^\perp\cap E_1)+\dim(X^\perp\cap E_2). \]
We proceed by mathematical induction on $\dim X^\perp$, so the base case is $X^\perp=B$. We claim that $r(M_1;B\cap E_1)=\dim(B\cap E_1)$. Since $B$ is a basis, it holds for all $Y\subseteq B$ that $r(M_1';Y)+r(M_2';Y) \geq \dim Y$ (by a reasoning as in the beginning of this proof). In particular, this holds for $Y=B\cap E_1\subseteq B$, so
\[ r(M_1';B\cap E_1)+r(M_2';B\cap E_1) = r(M_1;B\cap E_1) + 0 \geq \dim(B\cap E_1) \]
and thus by the rank axiom (r2) equality holds and we prove our claim. By the same reasoning, we have that $r(M_2;B\cap E_2)=\dim(B\cap E_2)$. This implies the induction step of our proof:
\[ r(M_1;B\cap E_1) + r(M_2;B\cap E_2) + \dim B = \dim B+\dim(B\cap E_1)+\dim(B\cap E_2). \]
Now assume the inequality holds for all $Y\supseteq B$ with $\dim Y\leq d$, where $\dim B\leq d\leq\dim E$. Consider a space $Y$ with $\dim Y=d+1$ and write $Y=Y'\oplus x$ for some $1$-dimensional subspace $x$. Since $x$ cannot be in both $E_1$ and $E_2$, we can assume without loss of generality that $x\not\subseteq E_1$ for any choice of $x$ such that $Y=Y'\oplus x$ (the case $x\not\subseteq E_2$ goes similarly). Then by rewriting and using the induction hypothesis we get
\begin{align*}
\lefteqn{r(M_1;(Y'\oplus x)\cap E_1) + r(M_2;(Y'\oplus x)\cap E_2) + \dim (Y'\oplus x)} \\
 &= r(M_1;Y'\cap E_1) + r(M_2;(Y'\oplus x)\cap E_2) + \dim Y' +1 \\
 &\geq r(M_1;Y'\cap E_1) + r(M_2;Y'\cap E_2) + \dim Y' +1 \\
 &\geq \dim B+\dim(Y'\cap E_1)+\dim(Y'\cap E_2) + 1\\
 &= \dim B+\dim((Y'\oplus x)\cap E_1)+\dim(Y'\cap E_2)+1 \\
 &\geq \dim B+\dim((Y'\oplus x)\cap E_1)+\dim((Y'\oplus x)\cap E_2).
\end{align*}
This concludes the proof that $B^\perp$ is independent in $M_1^*\oplus M_2^*$, hence a basis, and we have proven that $(M_1\oplus M_2)^*=M_1^*\oplus M_2^*$. 
\end{proof}

In the last example we will answer the question started in Section \ref{ExDim4} about the direct sum of two copies of $U_{1,2}$. This direct sum is now uniquely defined.

\begin{Example}\label{2U12}
Let $M_1=M_2=U_{1,2}$. We will compute $M:=M_1 \oplus M_2$. This $q$-matroid is defined as $M=U_{1,2}'\vee U_{1,2}'$. \\
Let us coordinatize the ground space of $M_1$ as 
$E_1=\langle (1,0,0,0), (0,1,0,0) \rangle$ and that of $M_2$ as $E_2=\langle (0,0,1,0), (0,0,0,1) \rangle$. Let $E=E_1\oplus E_2$. \\
We first compute $U_{1,2}'$. Since $n_1=n_2=2$, we need to add two loops to $U_{1,2}$ via Definition \ref{AggiungiLoop}. This gives a $q$-matroid with ground space $E$ and $r(A)=1$ for each $A\subseteq E$, unless $A\subseteq E_2$, then $r(A)=0$. \\
To determine $M=U_{1,2}'\vee U_{1,2}'$ we use Lemma \ref{l-RankOfSum} to get $r(M)=2$. By Proposition \ref{prop-noloops}, $M$ does not have any loops. So it suffices to decide for every $2$-dimensional space $A$ whether it is a basis or a circuit. First, note that
\[r(A)=\min_{X \subseteq A} \{r_1(X)+r_2(X)+\dim(A)-\dim(X)\}
=\min_{X \subseteq A} \{r_1(X)+r_2(X)+2-\dim(X)\}.\]
We distinguish between different types of $2$-spaces, depending on their intersection with $E_1$ and $E_2$.
\begin{itemize}
\item For $A=E_1=E_2$ we have $r(A)=1$ by Corollary \ref{MinorTheorem}.
\item Let $A\cap E_1 =A \cap E_2=\{0\}$, then
\begin{itemize}
\item if $\dim(X)=0$ then $r_1(X)+r_2(X)+2-\dim(X)=2$; 
\item if $\dim(X)=1$ then $r_1(X)+r_2(X)+2-\dim(X)=3$;
\item if $\dim(X)=2$ then $r_1(X)+r_2(X)+2-\dim(X)=2$;
\end{itemize}
so we conclude that $r(A)=2$.
\item In the case $\dim(A\cap E_2)=1$ and $A\cap E_1 =\{0\}$ (or vice versa) we have
\begin{itemize}
\item if $\dim(X)=0$ then $r_1(X)+r_2(X)+2-\dim(X)=2$; 
\item if $\dim(X)=1$ then $r_1(X)+r_2(X)+2-\dim(X)=3$ if $X$ is not contained in $E_2$, and $r_1(X)+r_2(X)+2-\dim(X)=2$ otherwise;
\item if $\dim(X)=2$ then $r_1(X)+r_2(X)+2-\dim(X)=2$;
\end{itemize}
so we conclude that $r(A)=2$.
\item Finally, if $\dim(A\cap E_2)=\dim(A\cap E_1)=1$ we have
that
\begin{itemize}
\item if $\dim(X)=0$ then $r_1(X)+r_2(X)+2-\dim(X)=2$; 
\item if $\dim(X)=1$ then $r_1(X)+r_2(X)+2-\dim(X)=3$ if $X$ is not contained in $E_1$ nor in $E_2$, and $r_1(X)+r_2(X)+2-\dim(X)=2$ otherwise;
\item if $\dim(X)=2$ then $r_1(X)+r_2(X)+2-\dim(X)=2$;
\end{itemize}
so we conclude that $r(A)=2$.
\end{itemize}
We see that all $2$-spaces except $E_1$ and $E_2$ are basis. Since we have $E_1=E_2^\perp$, it follows that this $q$-matroid is self-dual. Because $U_{1,2}^*=U_{1,2}$, this example is in agreement with Theorem \ref{thm-DualDirect}.
\end{Example}

\section{Connectedness}\label{Connect}

In the classical case, every matroid is the direct sum of its connected components. It therefore makes sense to consider the notion of connectedness in the study of the direct sum of $q$-matroids. In this final section we collect some thoughts and examples concerning a possible $q$-analogue of connectedness. We will not be able to define the concept, but we hope to argue why it is not straightforward and give some possible paths for further investigation. \\

To define connectedness in classical matroids, we use the following relation on the elements of a matroid $M=(E,r)$.
\begin{quote}
Two elements $x,y\in E$ are related if either $x=y$ or if there is a circuit of $M$ that contains both $x$ and $y$.
\end{quote}
This relation is in fact an equivalence relation \cite[Theorem 3.36]{gordonmcnulty}. We call a matroid connected if it has only one equivalence class under this relation. If there are multiple equivalence classes $E_1,\ldots,E_k$ then we can write
\[ M=M|_{E_1}\oplus\cdots\oplus M|_{E_k}. \]
We will discuss some attempts to find a $q$-analogue of this equivalence relation. Note that we are looking for an equivalence relation on the $1$-dimensional spaces of $E$.

\subsection{First attempt}

The first obvious $q$-analogue for the relation is the following:
\begin{Definition}\label{def-failed1}
Two $1$-dimensional spaces $x,y\subseteq E$ are related if either $x=y$ or if there is a circuit of $M$ that contains both $x$ and $y$.
\end{Definition}
However, this is not an equivalence relation, because it is not transitive. Look at the matroid $P_1$ from the catalogue (Section \ref{P1}). The spaces $\langle(0,1,0)\rangle$ and $\langle(0,0,1)\rangle$ are in a circuit, and also $\langle (0,0,1)\rangle$ and $\langle(1,1,0)\rangle$ are in a circuit, but $\langle(0,1,0)\rangle$ and $\langle(1,1,0)\rangle$ are not in a circuit.

\subsection{Alternative attempt}

Assume we have a $q$-matroid $M=(E,r)$ with $\mathcal{H}$ its family of hyperplanes.

\begin{Definition}\label{relation_hyperplanes}
Let $x$ and $y$ be two $1$-dimensional spaces in $E$. We say $x$ and $y$ are related if $x=y$ or if there is a hyperplane $H\in\mathcal{H}$ such that $x,y\not\subseteq H$. We call this relation $R$.
\end{Definition}

\begin{Remark}
For classical matroids, consider the following relations:
\begin{itemize}
\item $x$ and $y$ are related if $x=y$ or if there is a circuit containing both $x$ and $y$.
\item $x$ and $y$ are related if $x=y$ or if there is a hyperplane containing neither $x$ nor $y$.
\end{itemize}
It is a well established result for classical matroids (see for example \cite[Theorem 3.36]{gordonmcnulty}) that the first relation is an equivalence relation. It is also a classical result \cite[Theorem 3.48]{gordonmcnulty} that both relations give the same equivalence classes. However, the $q$-analogues of these two relations are \emph{not} equivalent. Being in a circuit is equivalent to being in the orthogonal complement of a hyperplane, not being outside a hyperplane. So the relation defined in this subsection is not equivalent to the relation in the previous subsection. In fact, Definition \ref{relation_hyperplanes} is an equivalence relation, as the next theorem shows.
\end{Remark}

\begin{Theorem}
The relation $R$ from Definition \ref{relation_hyperplanes} is an equivalence relation.
\end{Theorem}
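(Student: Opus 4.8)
The plan is to verify the three properties in turn, with reflexivity and symmetry being immediate and transitivity carrying all the weight. Reflexivity holds by the clause $x=y$ in Definition \ref{relation_hyperplanes}, and symmetry is immediate because the conditions $x\not\subseteq H$ and $y\not\subseteq H$ play symmetric roles. So first I would reduce transitivity to a clean case. Suppose $x\,R\,y$ and $y\,R\,z$; I may assume $x,y,z$ are pairwise distinct, since otherwise one of the two given relations already yields $x\,R\,z$ (using reflexivity if $x=z$). Fix hyperplanes $H_1,H_2\in\mathcal{H}$ with $x,y\not\subseteq H_1$ and $y,z\not\subseteq H_2$. If $H_1=H_2$, this hyperplane avoids both $x$ and $z$ and we are done, so assume $H_1\neq H_2$. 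The remaining easy cases are: if $x\not\subseteq H_2$ then $H_2$ avoids both $x$ and $z$; if $z\not\subseteq H_1$ then $H_1$ avoids both. Hence the only genuine case is $x\subseteq H_2$ and $z\subseteq H_1$ (together with $y\not\subseteq H_1,H_2$), where neither given hyperplane works and a new one must be produced.

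For this case the key construction is the flat $K:=H_1\cap H_2$. As an intersection of flats it is itself a flat, and I would first pin down its rank. Since $H_1\neq H_2$ are maximal proper flats, each of rank $r(E)-1$, their join is $E$, so $r(H_1+H_2)=r(E)$ and semimodularity gives $r(K)\geq r(E)-2$; on the other hand $K\subsetneq H_1$ forces $r(K)<r(H_1)=r(E)-1$, because a flat strictly contained in an equal-rank flat is impossible (if $w\in H_1\setminus K$ then $r(K+w)=r(K)$ would put $w\in\cl(K)=K$). Thus $r(K)=r(E)-2$ exactly. Moreover $K$ avoids all three spaces: $x\not\subseteq K$ since $x\not\subseteq H_1$, $z\not\subseteq K$ since $z\not\subseteq H_2$, and $y\not\subseteq K$ since $y\not\subseteq H_1$. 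I would then pass to the contraction $N:=M/K$, a $q$-matroid of rank $2$ on $E/K$, in which $\bar x,\bar y,\bar z$ are non-loops (their spaces meet the flat $K$ trivially in rank). The hyperplanes of $M$ containing $K$ correspond exactly to the rank-$1$ flats (points) of $N$; writing $p_1=H_1/K$ and $p_2=H_2/K$, the hypotheses translate to $\bar z\subseteq p_1$, $\bar x\subseteq p_2$, and $\bar y\not\subseteq p_1,p_2$.

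The finish is to take $H_3:=\cl(K+y)$, the preimage of the point $p_y:=\cl_N(\bar y)$; since $r(K+y)=r(E)-1$ and $H_3$ is a flat, it is a hyperplane. Because $\bar y\not\subseteq p_1,p_2$, the point $p_y$ is distinct from both $p_1$ and $p_2$, and two distinct points of a rank-$2$ $q$-matroid meet in rank $0$; hence $\bar x\subseteq p_2\cap p_y$ or $\bar z\subseteq p_1\cap p_y$ would force $\bar x$ or $\bar z$ to be a loop, which it is not. Therefore $\bar x,\bar z\not\subseteq p_y$, equivalently $x,z\not\subseteq H_3$, giving $x\,R\,z$. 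I expect the main obstacle to be precisely the two structural facts that make the reduction legitimate, namely that $r(K)=r(E)-2$ and that contracting by the flat $K$ turns the problem into the transparent rank-$2$ configuration; once the relation is read off inside $M/K$, the separating hyperplane is forced by $y$, and the verification becomes routine.
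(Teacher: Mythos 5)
Your proof is correct, but it takes a genuinely different route from the paper. The paper follows the classical Gordon--McNulty argument almost verbatim: it invokes the hyperplane exchange axiom (H3$'$) and runs an induction on $\dim H_1-\dim(H_1\cap H_2)$, applying (H3$'$) twice in the inductive step to manufacture a new pair of hyperplanes at smaller ``distance''. You instead work in the lattice of flats: you pin down $K=H_1\cap H_2$ as a flat of rank $r(E)-2$, pass to the rank-$2$ contraction $M/K$, and exhibit the separating hyperplane explicitly as $H_3=\cl(K+y)$. Every step checks out --- the facts you use (intersections of flats are flats, rank is strictly monotone on flats, hyperplanes are exactly the flats of rank $r(E)-1$, flats of $M/K$ for a flat $K$ are the flats of $M$ containing $K$) are all standard and available in the cited literature, though you should cite them rather than treat them as free. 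What your approach buys is the elimination of the induction and of axiom (H3$'$) altogether: the whole argument collapses to the observation that two distinct hyperplanes through the flat $K$ meet exactly in $K$ (their intersection is a flat containing $K$ of rank at most $r(E)-2=r(K)$, hence equals $K$), so $x\subseteq H_2\cap H_3$ or $z\subseteq H_1\cap H_3$ would force $x$ or $z$ into $K$, which you ruled out; phrased this way you do not even need the contraction. What the paper's approach buys is self-containedness at the level of the hyperplane axioms: it never needs the closure operator, the gradedness of the flat lattice, or the behaviour of flats under contraction, which is why it is the natural choice in a paper that takes the hyperplane cryptomorphism as its starting point.
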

\begin{proof}
We follow the proof of \cite[Proposition 3.36]{gordonmcnulty}, replacing circuits with hyperplanes and reversing inclusion. $R$ is clearly reflexive and symmetric. So we only have to prove it is transitive. We will frequently use the following hyperplane axiom \cite{BCR21}:
\begin{itemize}
\item[(H3')] If $H_1,H_2\in\mathcal{H}$ with $y\not\subseteq H_1,H_2$ and $x\subseteq H_2$, $x\not\subseteq H_1$, then there is an $H_3\in\mathcal{H}$ such that $(H_1\cap H_2)+y\subseteq H_3$ and $x\not\subseteq H_3$.
\end{itemize}
Let $x,y,z$ be $1$-dimensional spaces in $E$. Let $x,y\not\subseteq H_1$ and $y,z\not\subseteq H_2$. We have to show there exists a hyperplane $H'$ not containing $x$ and $z$. If $x\not\subseteq H_2$ or $z\not\subseteq H_1$, we are done, so suppose $x\subseteq H_2$ and $z\subseteq H_1$. We will use induction on $\dim H_1-\dim(H_1\cap H_2)$. \\
Suppose $\dim H_1-\dim(H_1\cap H_2)=1$, then we can write $H_1$ as $(H_1\cap H_2)+z$. Applying (H3') yields an $H'\in\mathcal{H}$ such that $(H_1\cap H_2)+y\subseteq H'$ and $x\not\subseteq H'$. We need to have that $z\not\subseteq H'$, because otherwise $H_1\subsetneq H'$ and this violates axiom (H2). So $H'$ is a hyperplane not containing $x$ and $z$, as requested. \\
Now suppose $\dim H_1-\dim(H_1\cap H_2)=n>1$ and assume that $H'$ exists for all pairs of hyperplanes such that $\dim H_1-\dim(H_1\cap H_2)<n$. We will use (H3') twice to find a hyperplane $H_4\in\mathcal{H}$ such that $\dim H_1-\dim(H_1\cap H_4)<\dim H_1-\dim(H_1\cap H_2)$ and such that $x\subseteq H_4$, $x\not\subseteq H_1$ and $z\subseteq H_1$, $z\not\subseteq H_4$. Then we can apply the induction hypothesis to $H_1$ and $H_4$.
\[ \includegraphics[width=.8\textwidth]{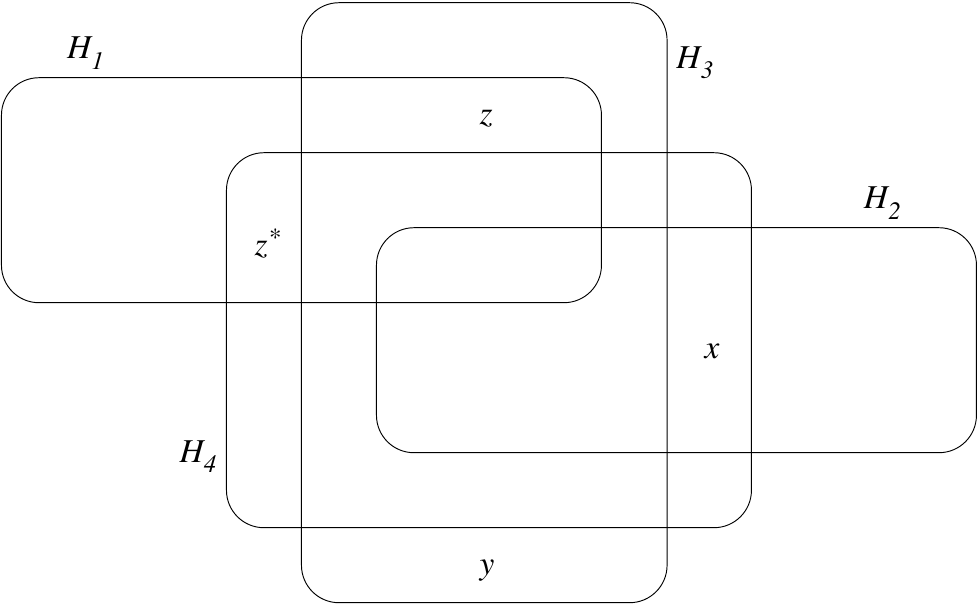} \]
First we apply (H3') to $H_1$ and $H_2$. This gives $H_3\in\mathcal{H}$ such that $(H_1\cap H_2)+y\subseteq H_3$ and $x\not\subseteq H_3$. If $z\not\subseteq H_3$ we are done, so let $z\subseteq H_3$. However, there is a $1$-dimensional space $z^*\subseteq H_1$, $z^*\not\subseteq H_2$ such that $z^*\not\subseteq H_3$: if not, $H_1\subsetneq H_3$ and this violates axiom (H2). \\
Now we apply (H3') again, to $H_2$ and $H_3$ with $z^*\not\subseteq H_2,H_3$ and $z\subseteq H_3$, $z\not\subseteq H_2$. This gives $H_4\in\mathcal{H}$ such that $(H_2\cap H_3)+z^*\subseteq H_4$ and $z\not\subseteq H_4$. If $x\not\subseteq H_4$ we are done, so let $x\subseteq H_4$. \\
By construction (see picture) we have that $(H_1\cap H_2)\subseteq(H_1\cap H_4)$. This inclusion is strict, because $z^*\subseteq H_1,H_4$ but $z^*\not\subseteq H_2$. This means we have $\dim H_1-\dim(H_1\cap H_4)<\dim H_1-\dim(H_1\cap H_4)$. By the induction hypothesis, we can now find an $H'\in\mathcal{H}$ such that $x,z\not\subseteq H'$. \\
This proves that the relation $R$ is transitive, and hence an equivalence relation.
\end{proof}

The good news is that we have found a relation that is in fact an equivalence relation. The bad news is that it does not work like we want to. The uniform $q$-matroids $U_{0,3}$ and $U_{3,3}$ only have one equivalence class, where we would want that $U_{0,3}$ is the sum of three copies of $U_{0,1}$ and $U_{3,3}$ is the sum of three copies of $U_{1,1}$. Also the $q$-matroid $P_1^*$ (Section \ref{P1star}) in the catalog has only one equivalence class, where we constructed it in Example \ref{ex-PrimePlusRed} as the direct sum $U_{1,1}\oplus U_{1,2}$. $P_1$ on the other hand (the dual of $P_1^*$) has more than one equivalence class: a signal that this attempt for an equivalence relation does not play nice with duality.

\subsection{Towards a well-defined definition}

As we saw, Definition \ref{def-failed1} is in general not an equivalence relation. However, in some $q$-matroids it is an equivalence relation. From our examples, we think the following statements could be true.

\begin{Conjecture}
The relation of Definition \ref{def-failed1} is an equivalence relation in at least one of $M$ and $M^*$.
\end{Conjecture}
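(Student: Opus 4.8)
The plan is to note, as always, that reflexivity and symmetry are immediate, and to concentrate entirely on transitivity. The first step I would take is to rewrite the relation of Definition~\ref{def-failed1} in dual language. As recorded in the remark preceding the conjecture, and consistent with the independent/spanning duality established in the proof of Theorem~\ref{thm-intersection}, the circuits of $M$ are exactly the orthogonal complements of the hyperplanes of $M^*$, and vice versa. Hence two $1$-dimensional spaces $x,y$ lie in a common circuit of $M$ if and only if there is a hyperplane $H\in\mathcal H(M^*)$ with $x+y\subseteq H^\perp$, equivalently $H\subseteq(x+y)^\perp$. Writing the analogous statement for $M^*$, the conjecture becomes the purely lattice-theoretic assertion that for at least one of the two hyperplane families $\mathcal H(M)$, $\mathcal H(M^*)$ the relation ``$(x+y)^\perp$ contains a hyperplane of that family'' is transitive.

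With this dictionary in place I would argue by contradiction, taking a counterexample of minimal dimension $n=\dim E$ in which transitivity fails in \emph{both} $M$ and $M^*$. In $M$ this supplies $1$-spaces $x,y,z$ together with circuits $C_1=H_1^\perp\ni x,y$ and $C_2=H_2^\perp\ni y,z$ (with $H_1,H_2\in\mathcal H(M^*)$) such that no circuit of $M$ contains both $x$ and $z$. The natural tool is the elimination axiom (C3): applied to $C_1,C_2$ and a codimension-$1$ space $X$ chosen to contain $x+z$, it yields a circuit $C_3\subseteq(C_1+C_2)\cap X$. The delicate point is to control which of $x,z$ this $C_3$ captures, and this is exactly where the $q$-analogue is weaker than the strong circuit elimination that settles the classical case.

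My intended bridge between the two matroids is the phenomenon visible in $P_1$ versus $P_1^*$: non-transitivity of $\sim_M$ forces $M$ to carry a ``parallel-type'' configuration (a low-rank flat, like the rank-one plane $E_1$ through which the broken chain in $P_1$ passes), and the dual of such a configuration is a ``series-type'' structure in $M^*$ which cannot itself break a transitivity chain. Concretely, I would aim for a dichotomy lemma: if $x\sim_M y$, $y\sim_M z$ but $x\not\sim_M z$, then the codimension-one flats witnessing the two relations force $(x+z)^\perp$ to contain a hyperplane of $M$, so that $x\sim_{M^*}z$ holds, and moreover that no three $1$-spaces can simultaneously exhibit the dual obstruction. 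Induction on $n$ through the self-dual pair of minors $M/T$ and $M^*|_{T^\perp}$ from Theorem~\ref{DualRestrContr}, after stripping away loops, coloops and the offending flat by restriction and contraction, should reduce the general case to this lemma.

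The step I expect to be the genuine obstacle is precisely this dichotomy: showing that the lattice-geometric pattern breaking transitivity in $M$ \emph{excludes} the corresponding pattern in $M^*$. Unlike the classical setting, where both the circuit and cocircuit relations are automatically transitive, here one relation is allowed to fail, and the entire content of the conjecture is that the two failures are incompatible. I would attack this through the self-dual connectivity-type function $\lambda(A)=r(A)+r(A^\perp)-r(M)$, trying to show that a failure in $M$ pins down a subspace on which $\lambda$ behaves in a way that rules out a failure in $M^*$. Quantifying this interaction between $\mathcal H(M)$ and $\mathcal H(M^*)$ inside the non-distributive subspace lattice is where I expect the real difficulty, and perhaps the need for a mild extra hypothesis, to lie.
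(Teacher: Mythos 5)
The first thing to say is that the paper does not prove this statement: it is stated as a Conjecture, explicitly motivated only by examples, and the authors end the section by soliciting further ideas. So there is no ``paper's own proof'' to compare against, and any complete argument you produced would be new mathematics. Your submission, however, is not a proof but a research plan, and you say so yourself: the entire logical content of the conjecture is concentrated in what you call the ``dichotomy lemma'' (a failure of transitivity of $\sim_M$ excludes a simultaneous failure of $\sim_{M^*}$), and you leave that lemma unproven, even flagging that it may require ``a mild extra hypothesis.'' A proof that assumes its own conclusion as an unestablished lemma has a gap coextensive with the statement itself.

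Beyond that structural problem, two concrete steps in the sketch would need repair before the plan could even be executed. First, your appeal to the elimination axiom (C3) is, as you note, too weak: it produces some circuit $C_3\subseteq(C_1+C_2)\cap X$ with no control over whether $C_3$ contains $x$ or $z$, and the paper's own counterexample to transitivity ($P_1$, where $\langle 010\rangle$ and $\langle 001\rangle$ share a circuit, $\langle 001\rangle$ and $\langle 110\rangle$ share a circuit, but $\langle 010\rangle$ and $\langle 110\rangle$ do not) shows that no purely formal strengthening of (C3) can close this; any argument must use information about $M^*$ that the circuit axioms of $M$ alone do not carry. Second, the proposed induction via minors is not obviously sound for this relation: restriction and contraction do not interact simply with the property ``$x$ and $y$ lie in a common circuit'' (a circuit of $M$ witnessing $x\sim_M y$ need not survive in $M|_T$ or $M/T$, and new circuits appear under contraction), so ``stripping away the offending flat'' would itself require a lemma relating $\sim_M$ to $\sim_{M|_T}$ and $\sim_{M/T}$ that you do not state. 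The dual dictionary you set up (circuits of $M$ as orthogonal complements of hyperplanes of $M^*$) is correct and is a reasonable starting point, and the $P_1$ versus $P_1^*$ heuristic is exactly the evidence the authors cite; but as it stands the proposal identifies where the difficulty lies without resolving it, which is precisely the status the conjecture already has in the paper.
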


\begin{Conjecture}
Let $M$ be a $q$-matroid with circuits $\mathcal{C}$ and cocircuits $\mathcal{C}^*$. Suppose $\dim(C\cap C^*)\neq1$ for all $C\in\mathcal{C}$ and $C^*\in\mathcal{C}^*$. Then Definition \ref{def-failed1} is an equivalence relation.
\end{Conjecture}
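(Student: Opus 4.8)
Reflexivity holds by the clause $x=y$, and symmetry is built into the defining condition, so the entire content of the statement is transitivity. The plan is therefore to fix three distinct $1$-dimensional spaces $x,y,z$ together with circuits $C_1,C_2\in\mathcal{C}$ such that $x,y\subseteq C_1$ and $y,z\subseteq C_2$, and to produce a circuit containing both $x$ and $z$. After the usual reductions we may assume $x\neq z$, $C_1\neq C_2$, $x\not\subseteq C_2$ and $z\not\subseteq C_1$; in every excluded case $x$ and $z$ already lie in a common circuit and there is nothing to prove.

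My main line of attack is to translate the circuit relation of Definition~\ref{def-failed1} into a separation condition coming from the cocircuits $\mathcal{C}^*$, which is automatically transitive. Call $x$ and $z$ \emph{unseparated} if no cocircuit $C^*\in\mathcal{C}^*$ satisfies exactly one of $x\subseteq C^*$ and $z\subseteq C^*$. This relation is reflexive, symmetric, and transitive: if $x,y$ and $y,z$ are each unseparated but some $C^*$ separates $x,z$, say $x\subseteq C^*$ and $z\not\subseteq C^*$, then $y\subseteq C^*$ would separate $y,z$ while $y\not\subseteq C^*$ would separate $x,y$, a contradiction either way. The crux is then the Lemma that, under the hypothesis $\dim(C\cap C^*)\neq1$, two $1$-spaces lie in a common circuit if and only if they are unseparated; granting this Lemma, transitivity of Definition~\ref{def-failed1} follows immediately from transitivity of \emph{unseparated}.

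For the Lemma, the direction ``common circuit $\Rightarrow$ unseparated'' is the place where the hypothesis is meant to enter, as the $q$-analogue of the classical orthogonality between circuits and cocircuits: if $x,z\subseteq C$ and a cocircuit $C^*$ contained $x$ but not $z$, one wants to contradict $\dim(C\cap C^*)\neq1$. The converse direction, ``unseparated $\Rightarrow$ common circuit'', is a connectivity statement that I would try to establish by building the required circuit inside $C_1+C_2$ with the elimination axiom (C3): choosing a codimension-$1$ space $X$ with $x,z\subseteq X$ but $y\not\subseteq X$ (possible exactly when $y\not\subseteq x+z$) yields a circuit $C_3\subseteq(C_1+C_2)\cap X$ that necessarily avoids $y$, after which one argues that $C_3$ must in fact pass through $x$ and $z$ by invoking that no cocircuit separates them.

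The hard part, and almost certainly the reason the statement is only conjectural, is exactly this last step. The axiom (C3) is a \emph{weak} elimination: it guarantees only \emph{some} circuit inside $(C_1+C_2)\cap X$, never one through a prescribed $1$-space, and $q$-matroids are known to lack a strong (prescribed) circuit-elimination axiom. Symmetrically, the hypothesis $\dim(C\cap C^*)\neq1$ forbids one very specific kind of intersection but does not by itself deliver the full orthogonality that the ``common circuit $\Rightarrow$ unseparated'' direction seems to require, since a separating cocircuit may still meet $C$ in dimension $2$ or more. Reconciling the weakness of (C3) with the limited strength of the hypothesis --- and separately handling the coplanar case $y\subseteq x+z$, where no codimension-$1$ space separates $y$ from $x$ and $z$ --- is the genuine obstacle, and it is where any complete proof must do real work.
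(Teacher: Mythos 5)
This statement is labelled as a \emph{Conjecture} in the paper, and the authors give no proof of it; they only remark that it holds in the classical case. So there is no argument of theirs to compare yours against. More importantly, what you have written is not a proof either, and you say so yourself: the entire argument is routed through a Lemma (``two $1$-spaces lie in a common circuit if and only if no cocircuit separates them'') that you do not establish in either direction.

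That Lemma is in fact false under the hypotheses of the conjecture, so the proposed route cannot be repaired. Take $M=U_{2,3}$ over $\mathbb{F}_2$ (Appendix \ref{qcd3}): its only circuit is $E$ and its cocircuits are all the $2$-spaces, so $\dim(C\cap C^*)=2\neq 1$ for every circuit--cocircuit pair and the hypothesis of the conjecture is satisfied. Any two distinct $1$-spaces $x,z$ lie in the common circuit $E$, yet they are separated in your sense: of the three $2$-spaces containing $x$, not all can contain $z$ (their pairwise intersection is $x$), so some cocircuit contains $x$ but not $z$. Hence ``common circuit'' does not imply ``unseparated'' even when $\dim(C\cap C^*)\neq 1$, and transitivity of the separation relation tells you nothing about transitivity of the circuit relation. (For $U_{2,3}$ the conjecture's conclusion is trivially true, since the circuit relation is the complete relation; the point is that your reduction to the cocircuit relation already breaks there.) The remaining difficulties you identify --- the weakness of axiom (C3) and the coplanar case $y\subseteq x+z$ --- are real, but they arise inside a framework whose foundational equivalence is wrong, so the statement remains, as in the paper, an open conjecture.
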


Both conjectures are of course true in the classical case. To see this for the last conjecture, note that it can be proven that the intersection between a circuit and a cocircuit can never be a single element. See for example \cite[Proposition 2.1.11]{oxley}. The $q$-analogue of this statement is not true in general: see for example the $q$-matroid $P_1^*$ of Section \ref{P1star}. It has one circuit, $\langle(0,1,0),(0,0,1)\rangle$, that intersects in dimension $1$ with the cocircuit $\langle(1,1,0),(0,0,1)\rangle$. \\

We welcome any further hints towards a better understanding of the $q$-analogues of the direct sum, connectedness, and their relation.

\section*{Acknowledgement}

We would like to express our gratitude to Heide Gluesing-Luerssen and Benjamin Jany for carefully reading the paper. Their questions and remarks have lead to significant corrections and clarifications in the manuscript. We thank also the reviewers of this manuscript for their valuable comments.

\bibliographystyle{abbrv}
\bibliography{bibliography} 

\appendix
\section{A catalogue of small $q$-matroids}\label{qcatalogue}

In this appendix we make a list of all $q$-matroids with a ground space up to dimension $3$. We hope that these explicit examples help the reader developing intuition on $q$-matroids. We represent the $q$-matroids as a colouring of the underlying subspace lattice: if a cover is red, the rank goes up; if a cover is green, the rank stays the same. See \cite{BCJ17} for more information on matroidal bicolourings.
When defining a space as the span of some vectors, for space reasons, we remove parentheses and commas. As an example, the space generated by $(0,1,0)$ will be denoted by $\langle 010 \rangle$.

\subsection{Dimension 0}\label{qcd0}

There is only one $q$-matroid of dimension zero: the uniform $q$-matroid $U_{0,0}$. This is independent of the field over which the ground space is defined.

\subsection{Dimension 1}\label{qcd1}

There are two $q$-matroids of dimension one: the uniform $q$-matroids $U_{0,1}$ and $U_{1,1}$. This is independent of the field over which the ground space is defined. Their representations are $[0]$ and $[1]$.

\subsection{Dimension 2}\label{qcd2}

It is proven in \cite{BCJ17} that a bicolouring is matroidial if and only if it is one of the following four options:
\[ \begin{array}{cc}
\hspace{-40pt} \xygraph{
[] *+{\dir{o}}
	(-@[red][ddlll(.5)] *+{\dir{o}}
		(-@[red][ddrrr(.5)] *+{\dir{o}}="0"
		)
	,-@[red][dl] *+{\dir{o}}
		(-@[red]"0"
		)
	,[d] *[]{\ldots}
	,-@[red][dr] *+{\dir{o}}
		(-@[red]"0"
		)
	,-@[red][ddrrr(.5)] *+{\dir{o}}
		(-@[red]"0"
		)
	)
} \hspace{-40pt} &
\hspace{-40pt} \xygraph{
[] *+{\dir{o}}
	(-@[green][ddlll(.5)] *+{\dir{o}}
		(-@[green][ddrrr(.5)] *+{\dir{o}}="0"
		)
	,-@[green][dl] *+{\dir{o}}
		(-@[green]"0"
		)
	,[d] *[]{\ldots}
	,-@[green][dr] *+{\dir{o}}
		(-@[green]"0"
		)
	,-@[green][ddrrr(.5)] *+{\dir{o}}
		(-@[green]"0"
		)
	)
} \hspace{-40pt} \\[-24pt]
\text{one} & \text{zero} \\[12pt]
\hspace{-40pt} \xygraph{
[] *+{\dir{o}}
	(-@[green][ddlll(.5)] *+{\dir{o}}
		(-@[red][ddrrr(.5)] *+{\dir{o}}="0"
		)
	,-@[green][dl] *+{\dir{o}}
		(-@[red]"0"
		)
	,[d] *[]{\ldots}
	,-@[green][dr] *+{\dir{o}}
		(-@[red]"0"
		)
	,-@[red][ddrrr(.5)] *+{\dir{o}}
		(-@[green]"0"
		)
	)
} \hspace{-40pt} &
\hspace{-40pt} \xygraph{
[] *+{\dir{o}}
	(-@[green][ddlll(.5)] *+{\dir{o}}
		(-@[red][ddrrr(.5)] *+{\dir{o}}="0"
		)
	,-@[green][dl] *+{\dir{o}}
		(-@[red]"0"
		)
	,[d] *[]{\ldots}
	,-@[green][dr] *+{\dir{o}}
		(-@[red]"0"
		)
	,-@[green][ddrrr(.5)] *+{\dir{o}}
		(-@[red]"0"
		)
	)
} \hspace{-40pt} \\[-24pt]
\text{mixed} & \text{prime}
\end{array} \]

This implies that there are also four $q$-matroids of dimension $2$. The \textbf{one} diamond is the $q$-matroid $U_{2,2}$ of rank $2$, represented by the identity matrix $I_2$. There are two $q$-matroids of rank $1$ and dimension $2$: the uniform $q$-matroid $U_{1,2}$ given by the \textbf{prime} diamond, represented by $\left[\begin{array}{cc}1&\alpha \end{array}\right]$ and the $q$-matroid given by the \textbf{mixed} diamond, represented as $\left[\begin{array}{cc} 1&0 \end{array}\right]$ where $1$ and $\alpha$ are algebraically independent. Finally, the only $q$-matroid with dimension $2$ and rank $0$ is $U_{0,2}$.

\subsection{Dimension 3}\label{qcd3}

One can argue that this is the first dimension where things get interesting.

\begin{Theorem}
There are $8$ $q$-matroids of dimension $3$.
\end{Theorem}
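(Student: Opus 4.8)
The plan is to organize the count by the value $r(E)\in\{0,1,2,3\}$ and to use duality to halve the work. By Definition \ref{defdual} the map $M\mapsto M^*$ is an involution on isomorphism classes that sends a $q$-matroid of rank $k$ on a $3$-dimensional ground space to one of rank $3-k$. Hence the classes with $r(E)=3$ are in bijection with those with $r(E)=0$, and the classes with $r(E)=2$ are in bijection with those with $r(E)=1$. So it suffices to enumerate the cases $r(E)=0$ and $r(E)=1$, and then the total is $N_0+N_1+N_1+N_0$, where $N_k$ is the number of isomorphism classes with $r(E)=k$.

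The two extreme cases are immediate. If $r(E)=0$ then (R1) and (R2) force $r(A)=0$ for every $A$, so the only such $q$-matroid is $U_{0,3}$ and $N_0=1$; dually $r(E)=3$ forces $r(A)=\dim A$ for all $A$, giving only $U_{3,3}$. The substance of the proof is the case $r(E)=1$. Here (R2) gives $r(A)\le r(E)=1$, so every subspace has rank $0$ or $1$ and the $q$-matroid is completely described by its loops. I would invoke the fact that loops come in subspaces \cite[Lemma 11]{JP18} to conclude that the loops span a single subspace $L$ with $r(L)=0$; since $r(E)=1$ we have $L\neq E$. Conversely, any subspace $S$ with $r(S)=0$ has all of its $1$-dimensional subspaces of rank $0$ by (R2), hence $S\subseteq L$. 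Therefore $r(A)=0$ exactly when $A\subseteq L$ and $r(A)=1$ otherwise, and the rank function is pinned down by the single flat $L$.

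To finish the case $r(E)=1$ I would check the converse: for each proper subspace $L\subsetneq E$ the function $r_L$ defined by $r_L(A)=0$ if $A\subseteq L$ and $r_L(A)=1$ otherwise satisfies (R1)--(R3), which is a short case analysis according to whether $A$ and $B$ are contained in $L$. Thus precisely the subspaces $L$ of dimension $0,1,2$ arise. A lattice isomorphism preserving rank must carry the loop space of one $q$-matroid onto that of the other, so $\dim L$ is an isomorphism invariant; the three values $\dim L\in\{0,1,2\}$ therefore give three pairwise non-isomorphic $q$-matroids, namely $U_{1,3}$ together with the two non-uniform examples $P_1$ and $P_2$ of the catalogue. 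Hence $N_1=3$, and the grand total is $1+3+3+1=8$.

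The main obstacle is the rank-$1$ classification, and specifically the claim that the loops genuinely assemble into one subspace $L$: this is what reduces the problem from an a priori large search to the single parameter $\dim L$. Once the loops-in-subspaces lemma and the involutivity of $\perp$ from Definition \ref{defdual} are granted, the remaining ingredients---the two uniform boundary cases, the routine submodularity check for $r_L$, and the duality bijection between the rank-$1$ and rank-$2$ classes---are straightforward.
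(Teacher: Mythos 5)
Your proposal is correct and follows essentially the same route as the paper: reduce to ranks $0$ and $1$ by duality, observe that the rank-$0$ case gives only $U_{0,3}$, and classify the rank-$1$ case by the dimension of the loop space using the loops-come-in-subspaces lemma. The only difference is that you spell out the verification that each candidate loop space yields a valid rank function and that $\dim L$ is an isomorphism invariant, steps the paper leaves implicit.
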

\begin{proof}
By duality, we only need to show that there are four $q$-matroids of dimension $3$ and rank $0$ or $1$. There is, as in any dimension, one $q$-matroid of rank $0$: $U_{0,3}$. For a rank $1$ $q$-matroid it suffices to say which $1$-dimensional subspaces of the ground space are independent (i.e., bases) and dependent (i.e., loops). Since loops come in subspaces \cite[Lemma 11]{JP18}, we determine the $q$-matroid completely by picking a dimension for the loop space. Since a loopspace in a $q$-matroid of rank $1$ has dimension at most $2$ (otherwise it would have rank $0$), the loopspace can have dimension $0$, $1$ or $2$. This gives three $q$-matroids of rank $1$.
\end{proof}

We will now explicitly list all eight $q$-matroids of dimension $3$. For convenience, we do this over the field $\mathbb{F}_2$, but the general construction of the theorem above holds for other fields as well.

\subsubsection{$U_{0,3}$}

\begin{minipage}{.55\textwidth}
\resizebox{\textwidth}{!}{ \xygraph{
[] *+[]{\tiny{\txt{$E$}}}
	(-@[green][dlll] *+[]{\tiny{\txt{100 \\  010}}}
		(-@[green][d] *+[]{\tiny{\txt{ 100}}}="100"
			(-@[green][drrr] *+[]{\tiny{\txt{0}}}="0"
			)
		,-@[green][dr] *+[]{\tiny{\txt{010}}}="010"
			(-@[green]"0"
			)
		,-@[green][drr] *+[]{\tiny{\txt{110}}}="110"
			(-@[green]"0"
			)
		)
	,(-@[green][dll] *+[]{\tiny{\txt{100 \\ 011}}}
		(-@[green]"100"
		,-@[green][drr] *+[]{\tiny{\txt{111}}}="111"
			(-@[green]"0"
			)
		,-@[green][drrr] *+[]{\tiny{\txt{011}}}="011"
			(-@[green]"0"
			)
		)
	,(-@[green][dl] *+[]{\tiny{\txt{100 \\ 001}}}
		(-@[green]"100"
		,-@[green][drrr] *+[]{\tiny{\txt{001}}}="001"
			(-@[green]"0"
			)
		,-@[green][drrrr] *+[]{\tiny{\txt{101}}}="101"
			(-@[green]"0"
			)
		)
	,(-@[green][d] *+[]{\tiny{\txt{010 \\ 001}}}
		(-@[green]"010"
		,-@[green]"011"
		,-@[green]"001"
		)
	,(-@[green][dr] *+[]{\tiny{\txt{101 \\ 010}}}
		(-@[green]"010"
		,-@[green]"111"
		,-@[green]"101"
		)
	,(-@[green][drr] *+[]{\tiny{\txt{101 \\ 011}}}
		(-@[green]"110"
		,-@[green]"011"
		,-@[green]"101"
		)
	,(-@[green][drrr] *+[]{\tiny{\txt{110 \\ 001}}}
		(-@[green]"110"
		,-@[green]"111"
		,-@[green]"001"
		)
    )
} }
\end{minipage}\hfill
\begin{minipage}{.4\textwidth}
Rank: $r(E)=0$ \\
Independent: $0$\\
Bases: $0$ \\
Circuits: all $1$-spaces \\
Hyperplanes: none\\
Cocircuits: none \\
Dual: $U_{3,3}$\\
Direct sum: $U_{0,1}\oplus U_{0,1}\oplus U_{0,1}$. \\
Representation: $\left[\begin{array}{ccc} 0 & 0 & 0 \end{array}\right]$
\end{minipage}

\subsubsection{$U_{1,3}$}

\begin{minipage}{.55\textwidth}
\resizebox{\textwidth}{!}{ \xygraph{
[] *+[]{\tiny{\txt{$E$}}}
	(-@[green][dlll] *+[]{\tiny{\txt{100 \\  010}}}
		(-@[green][d] *+[]{\tiny{\txt{ 100}}}="100"
			(-@[red][drrr] *+[]{\tiny{\txt{0}}}="0"
			)
		,-@[green][dr] *+[]{\tiny{\txt{010}}}="010"
			(-@[red]"0"
			)
		,-@[green][drr] *+[]{\tiny{\txt{110}}}="110"
			(-@[red]"0"
			)
		)
	,(-@[green][dll] *+[]{\tiny{\txt{100 \\ 011}}}
		(-@[green]"100"
		,-@[green][drr] *+[]{\tiny{\txt{111}}}="111"
			(-@[red]"0"
			)
		,-@[green][drrr] *+[]{\tiny{\txt{011}}}="011"
			(-@[red]"0"
			)
		)
	,(-@[green][dl] *+[]{\tiny{\txt{100 \\ 001}}}
		(-@[green]"100"
		,-@[green][drrr] *+[]{\tiny{\txt{001}}}="001"
			(-@[red]"0"
			)
		,-@[green][drrrr] *+[]{\tiny{\txt{101}}}="101"
			(-@[red]"0"
			)
		)
	,(-@[green][d] *+[]{\tiny{\txt{010 \\ 001}}}
		(-@[green]"010"
		,-@[green]"011"
		,-@[green]"001"
		)
	,(-@[green][dr] *+[]{\tiny{\txt{101 \\ 010}}}
		(-@[green]"010"
		,-@[green]"111"
		,-@[green]"101"
		)
	,(-@[green][drr] *+[]{\tiny{\txt{101 \\ 011}}}
		(-@[green]"110"
		,-@[green]"011"
		,-@[green]"101"
		)
	,(-@[green][drrr] *+[]{\tiny{\txt{110 \\ 001}}}
		(-@[green]"110"
		,-@[green]"111"
		,-@[green]"001"
		)
    )
} }
\end{minipage}\hfill
\begin{minipage}{.4\textwidth}
Rank: $r(E)=1$ \\
Independent: $0$, all $1$-spaces\\
Bases: all 1-spaces \\
Circuits: all $2$-spaces \\
Hyperplanes: $0$\\
Cocircuits: $E$ \\
Dual: $U_{2,3}$\\
Direct sum: no. \\
Representation: $\left[\begin{array}{ccc} 1 & \alpha & \alpha^2 \end{array}\right]_{\mathbb{F}_{2^3}}$
\end{minipage}

\subsubsection{$P_1$: rank 1, $1$-dimensional loopspace} \label{P1}

\begin{minipage}{.55\textwidth}
\resizebox{\textwidth}{!}{ \xygraph{
[] *+[]{\tiny{\txt{$E$}}}
	(-@[green][dlll] *+[]{\tiny{\txt{100 \\  010}}}
		(-@[red][d] *+[]{\tiny{\txt{ 100}}}="100"
			(-@[green][drrr] *+[]{\tiny{\txt{0}}}="0"
			)
		,-@[green][dr] *+[]{\tiny{\txt{010}}}="010"
			(-@[red]"0"
			)
		,-@[green][drr] *+[]{\tiny{\txt{110}}}="110"
			(-@[red]"0"
			)
		)
	,(-@[green][dll] *+[]{\tiny{\txt{100 \\ 011}}}
		(-@[red]"100"
		,-@[green][drr] *+[]{\tiny{\txt{111}}}="111"
			(-@[red]"0"
			)
		,-@[green][drrr] *+[]{\tiny{\txt{011}}}="011"
			(-@[red]"0"
			)
		)
	,(-@[green][dl] *+[]{\tiny{\txt{100 \\ 001}}}
		(-@[red]"100"
		,-@[green][drrr] *+[]{\tiny{\txt{001}}}="001"
			(-@[red]"0"
			)
		,-@[green][drrrr] *+[]{\tiny{\txt{101}}}="101"
			(-@[red]"0"
			)
		)
	,(-@[green][d] *+[]{\tiny{\txt{010 \\ 001}}}
		(-@[green]"010"
		,-@[green]"011"
		,-@[green]"001"
		)
	,(-@[green][dr] *+[]{\tiny{\txt{101 \\ 010}}}
		(-@[green]"010"
		,-@[green]"111"
		,-@[green]"101"
		)
	,(-@[green][drr] *+[]{\tiny{\txt{101 \\ 011}}}
		(-@[green]"110"
		,-@[green]"011"
		,-@[green]"101"
		)
	,(-@[green][drrr] *+[]{\tiny{\txt{110 \\ 001}}}
		(-@[green]"110"
		,-@[green]"111"
		,-@[green]"001"
		)
    )
} }
\end{minipage}\hfill
\begin{minipage}{.4\textwidth}
Rank: $r(E)=1$ \\
Independent: $0$, all $1$-spaces except $\langle100\rangle$\\
Bases: all 1-spaces except $\langle100\rangle$ \\
Circuits: $\langle100\rangle$, $\langle010,001\rangle$, $\langle101,010\rangle$, $\langle101,011\rangle$, $\langle110,001\rangle$  \\
Hyperplanes: $\langle100\rangle$\\
Cocircuits: $\langle010,001\rangle$ \\
Dual: $M_1^*$\\
Direct sum: $U_{0,1}\oplus U_{1,2}$ \\
Representation: $\left[\begin{array}{ccc} 0 & 1 & \alpha \end{array}\right]_{\mathbb{F}_{2^2}}$
\end{minipage}

\subsubsection{$P_2$: rank 1, $2$-dimensional loopspace} \label{P2}

\begin{minipage}{.55\textwidth}
\resizebox{\textwidth}{!}{ \xygraph{
[] *+[]{\tiny{\txt{$E$}}}
	(-@[red][dlll] *+[]{\tiny{\txt{100 \\  010}}}
		(-@[green][d] *+[]{\tiny{\txt{ 100}}}="100"
			(-@[green][drrr] *+[]{\tiny{\txt{0}}}="0"
			)
		,-@[green][dr] *+[]{\tiny{\txt{010}}}="010"
			(-@[green]"0"
			)
		,-@[green][drr] *+[]{\tiny{\txt{110}}}="110"
			(-@[green]"0"
			)
		)
	,(-@[green][dll] *+[]{\tiny{\txt{100 \\ 011}}}
		(-@[red]"100"
		,-@[green][drr] *+[]{\tiny{\txt{111}}}="111"
			(-@[red]"0"
			)
		,-@[green][drrr] *+[]{\tiny{\txt{011}}}="011"
			(-@[red]"0"
			)
		)
	,(-@[green][dl] *+[]{\tiny{\txt{100 \\ 001}}}
		(-@[red]"100"
		,-@[green][drrr] *+[]{\tiny{\txt{001}}}="001"
			(-@[red]"0"
			)
		,-@[green][drrrr] *+[]{\tiny{\txt{101}}}="101"
			(-@[red]"0"
			)
		)
	,(-@[green][d] *+[]{\tiny{\txt{010 \\ 001}}}
		(-@[red]"010"
		,-@[green]"011"
		,-@[green]"001"
		)
	,(-@[green][dr] *+[]{\tiny{\txt{101 \\ 010}}}
		(-@[red]"010"
		,-@[green]"111"
		,-@[green]"101"
		)
	,(-@[green][drr] *+[]{\tiny{\txt{101 \\ 011}}}
		(-@[red]"110"
		,-@[green]"011"
		,-@[green]"101"
		)
	,(-@[green][drrr] *+[]{\tiny{\txt{110 \\ 001}}}
		(-@[red]"110"
		,-@[green]"111"
		,-@[green]"001"
		)
    )
} }
\end{minipage}\hfill
\begin{minipage}{.4\textwidth}
Rank: $r(E)=1$ \\
Independent:  $0$ all $1$-spaces except $\langle 100\rangle,\langle 010 \rangle,\langle 110 \rangle$\\
Bases: all independents except 0 \\
Circuits: the three loops  $\langle 100\rangle,\langle 010 \rangle,\langle 110 \rangle$\\
Hyperplanes: $\langle 100 , 010 \rangle$\\
Cocircuits: $\langle 001 \rangle$ \\
Dual: $M_2^*$\\
Direct sum: $U_{0,1}\oplus U_{0,1}\oplus U_{1,1}$. \\
Representation: $\left[\begin{array}{ccc} 0 & 0 & 1 \end{array}\right]$
\end{minipage}

\subsubsection{$P_2^*$} \label{P2star}

\begin{minipage}{.55\textwidth}
\resizebox{\textwidth}{!}{ \xygraph{
[] *+[]{\tiny{\txt{$E$}}}
	(-@[green][dlll] *+[]{\tiny{\txt{100 \\  010}}}
		(-@[red][d] *+[]{\tiny{\txt{ 100}}}="100"
			(-@[red][drrr] *+[]{\tiny{\txt{0}}}="0"
			)
		,-@[red][dr] *+[]{\tiny{\txt{010}}}="010"
			(-@[red]"0"
			)
		,-@[red][drr] *+[]{\tiny{\txt{110}}}="110"
			(-@[red]"0"
			)
		)
	,(-@[green][dll] *+[]{\tiny{\txt{100 \\ 011}}}
		(-@[red]"100"
		,-@[red][drr] *+[]{\tiny{\txt{111}}}="111"
			(-@[red]"0"
			)
		,-@[red][drrr] *+[]{\tiny{\txt{011}}}="011"
			(-@[red]"0"
			)
		)
	,(-@[red][dl] *+[]{\tiny{\txt{100 \\ 001}}}
		(-@[green]"100"
		,-@[red][drrr] *+[]{\tiny{\txt{001}}}="001"
			(-@[green]"0"
			)
		,-@[green][drrrr] *+[]{\tiny{\txt{101}}}="101"
			(-@[red]"0"
			)
		)
	,(-@[red][d] *+[]{\tiny{\txt{010 \\ 001}}}
		(-@[green]"010"
		,-@[green]"011"
		,-@[red]"001"
		)
	,(-@[green][dr] *+[]{\tiny{\txt{101 \\ 010}}}
		(-@[red]"010"
		,-@[red]"111"
		,-@[red]"101"
		)
	,(-@[green][drr] *+[]{\tiny{\txt{101 \\ 011}}}
		(-@[red]"110"
		,-@[red]"011"
		,-@[red]"101"
		)
	,(-@[red][drrr] *+[]{\tiny{\txt{110 \\ 001}}}
		(-@[green]"110"
		,-@[green]"111"
		,-@[red]"001"
		)
    )
} }
\end{minipage}\hfill
\begin{minipage}{.4\textwidth}
Rank: $r(E)=2$ \\
Independent:  $0$ all $1$-spaces except $\langle 001\rangle$, and as 2-spaces $\langle 100, 010\rangle$, $\langle 100, 011\rangle$, $\langle 101, 010\rangle$\\
Bases: $\langle 100, 010\rangle$, $\langle 100, 011\rangle$, $\langle 101, 010\rangle$ \\
Circuits: the loop   $\langle 001\rangle$\\
Hyperplanes: $\langle 100 , 001 \rangle$, $\langle 010, 001\rangle$,  $\langle 110, 001\rangle$ \\
Cocircuits: $\langle 010 \rangle$, $\langle 100\rangle$, $\langle 110 \rangle$ \\
Dual: $M_2^*$\\
Direct sum: $U_{1,1}\oplus U_{1,1}\oplus U_{0,1}$. \\
Representation: $\left[\begin{array}{ccc}  1 & 0&0 \\
0& 1 & 0 \end{array}\right]$
\end{minipage}

\subsubsection{$P_1^*$}\label{P1star}

\begin{minipage}{.55\textwidth}
\resizebox{\textwidth}{!}{ \xygraph{
[] *+[]{\tiny{\txt{$E$}}}
	(-@[green][dlll] *+[]{\tiny{\txt{100 \\ 010}}}
		(-@[red][d] *+[]{\tiny{\txt{ 100}}}="100"
			(-@[red][drrr] *+[]{\tiny{\txt{0}}}="0"
			)
		,-@[red][dr] *+[]{\tiny{\txt{010}}}="010"
			(-@[red]"0"
			)
		,-@[red][drr] *+[]{\tiny{\txt{110}}}="110"
			(-@[red]"0"
			)
		)
	,(-@[green][dll] *+[]{\tiny{\txt{100 \\ 011}}}
		(-@[red]"100"
		,-@[red][drr] *+[]{\tiny{\txt{111}}}="111"
			(-@[red]"0"
			)
		,-@[red][drrr] *+[]{\tiny{\txt{011}}}="011"
			(-@[red]"0"
			)
		)
	,(-@[green][dl] *+[]{\tiny{\txt{100 \\ 001}}}
		(-@[red]"100"
		,-@[red][drrr] *+[]{\tiny{\txt{001}}}="001"
			(-@[red]"0"
			)
		,-@[red][drrrr] *+[]{\tiny{\txt{101}}}="101"
			(-@[red]"0"
			)
		)
	,(-@[red][d] *+[]{\tiny{\txt{010 \\ 001}}}
		(-@[green]"010"
		,-@[green]"011"
		,-@[green]"001"
		)
	,(-@[green][dr] *+[]{\tiny{\txt{101 \\ 010}}}
		(-@[red]"010"
		,-@[red]"111"
		,-@[red]"101"
		)
	,(-@[green][drr] *+[]{\tiny{\txt{101 \\ 011}}}
		(-@[red]"110"
		,-@[red]"011"
		,-@[red]"101"
		)
	,(-@[green][drrr] *+[]{\tiny{\txt{110 \\ 001}}}
		(-@[red]"110"
		,-@[red]"111"
		,-@[red]"001"
		)
    )
} }
\end{minipage}\hfill
\begin{minipage}{.4\textwidth}
Rank: $r(E)=2$ \\
Independent: $0$, all $1$-spaces, all $2$-spaces except $\langle010,001\rangle$\\
Bases: all 2-spaces except $\langle010,001\rangle$ \\
Circuits: $\langle010,001\rangle$ \\
Hyperplanes: $\langle100\rangle$, $\langle110\rangle$, $\langle111\rangle$, $\langle101\rangle$, $\langle010,001\rangle$ \\
Cocircuits: $\langle010,001\rangle$, $\langle110,001\rangle$, $\langle101,011\rangle$, $\langle101,010\rangle$, $\langle100\rangle$ \\
Dual: $M_1$\\
Direct sum: $U_{1,1}\oplus U_{1,2}$ \\
Representation: $\left[\begin{array}{ccc} 1 & 0 & 0 \\ 0 & \alpha & 1 \end{array}\right]_{\mathbb{F}_{2^2}}$
\end{minipage}

\subsubsection{$U_{2,3}$}

\begin{minipage}{.55\textwidth}
\resizebox{\textwidth}{!}{ \xygraph{
[] *+[]{\tiny{\txt{$E$}}}
	(-@[green][dlll] *+[]{\tiny{\txt{100 \\  010}}}
		(-@[red][d] *+[]{\tiny{\txt{ 100}}}="100"
			(-@[red][drrr] *+[]{\tiny{\txt{0}}}="0"
			)
		,-@[red][dr] *+[]{\tiny{\txt{010}}}="010"
			(-@[red]"0"
			)
		,-@[red][drr] *+[]{\tiny{\txt{110}}}="110"
			(-@[red]"0"
			)
		)
	,(-@[green][dll] *+[]{\tiny{\txt{100 \\ 011}}}
		(-@[red]"100"
		,-@[red][drr] *+[]{\tiny{\txt{111}}}="111"
			(-@[red]"0"
			)
		,-@[red][drrr] *+[]{\tiny{\txt{011}}}="011"
			(-@[red]"0"
			)
		)
	,(-@[green][dl] *+[]{\tiny{\txt{100 \\ 001}}}
		(-@[red]"100"
		,-@[red][drrr] *+[]{\tiny{\txt{001}}}="001"
			(-@[red]"0"
			)
		,-@[red][drrrr] *+[]{\tiny{\txt{101}}}="101"
			(-@[red]"0"
			)
		)
	,(-@[green][d] *+[]{\tiny{\txt{010 \\ 001}}}
		(-@[red]"010"
		,-@[red]"011"
		,-@[red]"001"
		)
	,(-@[green][dr] *+[]{\tiny{\txt{101 \\ 010}}}
		(-@[red]"010"
		,-@[red]"111"
		,-@[red]"101"
		)
	,(-@[green][drr] *+[]{\tiny{\txt{101 \\ 011}}}
		(-@[red]"110"
		,-@[red]"011"
		,-@[red]"101"
		)
	,(-@[green][drrr] *+[]{\tiny{\txt{110 \\ 001}}}
		(-@[red]"110"
		,-@[red]"111"
		,-@[red]"001"
		)
    )
} }
\end{minipage}\hfill
\begin{minipage}{.4\textwidth}
Rank: $r(E)=2$ \\
Independent: all except $E$\\
Bases: all $2$-spaces \\
Circuits: $E$ \\
Hyperplanes: all $1$-spaces \\
Cocircuits: all $2$-spaces \\
Dual: $U_{1,3}$\\
Direct sum: no. \\
Representation: $\left[\begin{array}{ccc} 1 & 0 & \alpha \\ 0 & 1 & \alpha^2 \end{array}\right]_{\mathbb{F}_{2^3}}$
\end{minipage}

\subsubsection{$U_{3,3}$}
\begin{minipage}{.55\textwidth}
\resizebox{\textwidth}{!}{ \xygraph{
[] *+[]{\tiny{\txt{$E$}}}
	(-@[red][dlll] *+[]{\tiny{\txt{100 \\  010}}}
		(-@[red][d] *+[]{\tiny{\txt{ 100}}}="100"
			(-@[red][drrr] *+[]{\tiny{\txt{0}}}="0"
			)
		,-@[red][dr] *+[]{\tiny{\txt{010}}}="010"
			(-@[red]"0"
			)
		,-@[red][drr] *+[]{\tiny{\txt{110}}}="110"
			(-@[red]"0"
			)
		)
	,(-@[red][dll] *+[]{\tiny{\txt{100 \\ 011}}}
		(-@[red]"100"
		,-@[red][drr] *+[]{\tiny{\txt{111}}}="111"
			(-@[red]"0"
			)
		,-@[red][drrr] *+[]{\tiny{\txt{011}}}="011"
			(-@[red]"0"
			)
		)
	,(-@[red][dl] *+[]{\tiny{\txt{100 \\ 001}}}
		(-@[red]"100"
		,-@[red][drrr] *+[]{\tiny{\txt{001}}}="001"
			(-@[red]"0"
			)
		,-@[red][drrrr] *+[]{\tiny{\txt{101}}}="101"
			(-@[red]"0"
			)
		)
	,(-@[red][d] *+[]{\tiny{\txt{010 \\ 001}}}
		(-@[red]"010"
		,-@[red]"011"
		,-@[red]"001"
		)
	,(-@[red][dr] *+[]{\tiny{\txt{101 \\ 010}}}
		(-@[red]"010"
		,-@[red]"111"
		,-@[red]"101"
		)
	,(-@[red][drr] *+[]{\tiny{\txt{101 \\ 011}}}
		(-@[red]"110"
		,-@[red]"011"
		,-@[red]"101"
		)
	,(-@[red][drrr] *+[]{\tiny{\txt{110 \\ 001}}}
		(-@[red]"110"
		,-@[red]"111"
		,-@[red]"001"
		)
    )
} }
\end{minipage}\hfill
\begin{minipage}{.4\textwidth}
Rank: $r(E)=3$ \\
Independent: all\\
Bases: $E$ \\
Circuits: no \\
Hyperplanes: $2$-spaces \\
Cocircuits: $1$-spaces \\
Dual: $U_{0,3}$\\
Direct sum: $U_{1,1}\oplus U_{1,1}\oplus U_{1,1}$. \\
Representation: identity matrix $I_3$
\end{minipage}

\end{document}